\theoremstyle{plain}
\newtheorem{theorem}{Theorem}[section]
\newtheorem{step}{Step}
\newtheorem{corollary}[theorem]{Corollary}
\newtheorem{lemma}[theorem]{Lemma}
\newtheorem{proposition}[theorem]{Proposition}
\newtheorem{theorem-definition}[theorem]{Theorem-Definition}
\theoremstyle{definition}
\newtheorem{definition}[theorem]{Definition}
\newtheorem{notation}[theorem]{Notation}
\newtheorem{convention}[theorem]{Convention}
\theoremstyle{remark}
\newtheorem{remark}[theorem]{Remark}
\numberwithin{equation}{section}
\newcommand{\N}{{\mathds{N}}}
\newcommand{\Z}{{\mathds{Z}}}
\newcommand{\Q}{{\mathds{Q}}}
\newcommand{\R}{{\mathds{R}}}
\newcommand{\C}{{\mathds{C}}}
\newcommand{\D}{{\mathfrak{D}}}
\newcommand{\A}{{\mathfrak{A}}}
\newcommand{\B}{{\mathfrak{B}}}
\newcommand{\M}{{\mathfrak{M}}}
\newcommand{\Lip}{{\mathsf{L}}}
\newcommand{\dist}{{\mathsf{dist}}}
\newcommand{\qpropinquity}[1]{{\mathsf{\Lambda}_{#1}}}
\newcommand{\Kantorovich}[1]{{\mathsf{mk}_{#1}}}
\newcommand{\Haus}[1]{{\mathsf{Haus}_{#1}}}
\newcommand{\StateSpace}{{\mathscr{S}}}
\newcommand{\mongekant}{{Mon\-ge-Kan\-to\-ro\-vich metric}}
\newcommand{\Lqcms}{{\JLL} quantum compact metric space}
\newcommand{\Qqcms}[1]{${#1}$-quasi-Leibniz quantum compact metric space}
\newcommand{\gQqcms}{quasi-Leibniz quantum compact metric space}
\newcommand{\unit}{1}
\newcommand{\sa}[1]{{\mathfrak{sa}\left({#1}\right)}}
\newcommand{\inner}[2]{{\left<{#1},{#2}\right>}}
\newcommand{\JLL}{Lei\-bniz}
\newcommand{\dom}[1]{{\operatorname*{dom}({#1})}}
\newcommand{\diam}[2]{{\mathrm{diam}\left({#1},{#2}\right)}}
\newcommand{\qdiam}[2]{{\mathrm{diam}^\ast\left({#1},{#2}\right)}}
\newcommand{\covn}[3]{{\mathrm{cov}_{\left({#1}\right)}\left({#2}\middle\vert{#3}\right)}}
\newcommand{\bridgereach}[2]{{\varrho\left({#1}\middle|{#2}\right)}}
\newcommand{\bridgeheight}[2]{{\varsigma\left({#1}\middle|{#2}\right)}}
\newcommand{\bridgelength}[2]{{\lambda\left({#1}\middle|{#2}\right)}}
\newcommand{\bridgenorm}[2]{{\mathsf{bn}_{ {#1}  }\left({#2}\right)}}
\newcommand{\alg}[1]{{\mathfrak{#1}}}
\newcommand{\BaireSpace}{{\mathscr{N}}}
\newcommand{\uhf}[1]{{\mathfrak{uhf}\left({#1}\right)}}
\newcommand{\indmor}[2]{{\underrightarrow{#1^{#2}}}}
\newcommand{\af}[1]{{\alg{AF}_{#1}}}
\renewcommand{\geq}{\geqslant}
\renewcommand{\leq}{\leqslant}
\newcommand{\CondExp}[2]{{\mathds{E}\left({#1}\middle\vert{#2}\right)}}
\newcommand{\vast}{\bBigg@{4}}
\newcommand{\Vast}{\bBigg@{5}}
\begin{document}


\baselineskip=17pt

\title[Quantum Ultrametrics on AF algebras]{Quantum Ultrametrics on AF Algebras and The Gromov-Hausdorff Propinquity}
\author{Konrad Aguilar}
\address{Department of Mathematics \\ University of Denver \\ 2199 S. University Blvd \\ Denver CO 80208}
\email{konrad.aguilar@du.edu}
\urladdr{http://www.math.du.edu/~kaguilar/}
\author{Fr\'{e}d\'{e}ric Latr\'{e}moli\`{e}re}
\address{Department of Mathematics \\ University of Denver \\ 2199 S. University Blvd \\ Denver CO 80208}
\email{frederic@math.du.edu}
\urladdr{http://www.math.du.edu/\symbol{126}frederic/}

\date{\today}
\subjclass[2010]{Primary:  46L89, 46L30, 58B34.}
\keywords{Noncommutative metric geometry, Gromov-Hausdorff convergence, Monge-Kantorovich distance, Quantum Metric Spaces, Lip-norms, AF algebras}

\begin{abstract}
We construct quantum metric structures on unital AF algebras with a faithful tracial state, and prove that for such metrics, AF algebras are limits of their defining inductive sequences of finite dimensional C*-algebras for the quantum propinquity. We then study the geometry, for the quantum propinquity, of three natural classes of AF algebras equipped with our quantum metrics: the UHF algebras, the Effros-Shen AF algebras associated with continued fraction expansions of irrationals, and the Cantor space, on which our construction recovers traditional ultrametrics. We also exhibit several compact classes of AF algebras for the quantum propinquity and show continuity of our family of Lip-norms on a fixed AF algebra. Our work thus brings AF algebras into the realm of noncommutative metric geometry.
\end{abstract}
\maketitle

\setcounter{tocdepth}{1}
\tableofcontents


\section{Introduction}

The Gromov-Hausdorff propinquity \cite{Latremoliere13,Latremoliere13b,Latremoliere14,Latremoliere15,Latremoliere15b}, a family of noncommutative analogues of the Gromov-Hausdorff distance, provides a new framework to study the geometry of classes of C*-algebras, opening new avenues of research in noncommutative geometry. We propose to bring the class of AF algebras into this nascent research project by constructing natural quantum metrics on AF algebras endowed with a faithful tracial state. We prove first that AF algebras endowed with our quantum metrics are indeed limits of some sequence of finite dimensional quantum compact metric spaces for the quantum propinquity. The main application of our AF quantum metrics is the construction of a natural continuous surjection, for the quantum propinquity, from the the space of irrational numbers in $(0,1)$ onto the class of the Effros-Shen AF algebras built in \cite{Effros80b} from continued fraction expansion of irrational numbers --- these AF algebras were of course famously employed by Pimsner-Voiculescu in \cite{PimVoi80a} to complete the classification of the irrational rotation C*-algebras. We also construct another continuous map from the Baire Space onto the class of UHF algebras, and we prove that our construction of quantum metrics, when applied to the Cantor space, recover many standard ultrametrics on that space. Due to this observation, we name our metrics on AF algebras quantum ultrametrics. Moreover, we exploit some of the topological properties of the Baire Space to exhibit many compact sets of AF algebras for the quantum propinquity. 

Various notions of finite dimensional approximations of C*-algebras are found in C*-algebra theory, from nuclearity to quasi-diagonality, passing through exactness, to name a few of the more common notions. They are also a core focus and major source of examples for our research in noncommutative metric geometry. Examples of finite dimensional approximations in the sense of the propinquity include the approximations of quantum tori by fuzzy tori \cite{Latremoliere05,Latremoliere13c} and the full matrix approximations C*-algebras of continuous functions on coadjoint orbits of semisimple Lie groups \cite{Rieffel01,Rieffel10c,Rieffel15}. Moreover, the existence of finite dimensional approximations for quantum compact metric spaces, in the sense of the dual propinquity, were studied in \cite{Latremoliere15}, as part of the discovery by second author of a noncommutative analogue of the Gromov compactness theorem \cite{Gromov81}. 

Among all the types of finite approximations in C*-algebras, Approximately Finite (AF) algebras occupy a special place. Introduced by Bratteli \cite{Bratteli72}, following on the work of Glimm \cite{Glimm60} on UHF algebras, AF algebras are inductive limits, in the category of C*-algebras, of sequences of finite dimensional algebras. Elliott initiated his classification program with AF algebras, and this project brought $K$-theory into the core of C*-algebra theory. Among many problems studied in relation of AF algebras, the fascinating question of when a particular C*-algebra may be embedded into an AF algebra has a long history, with the classification of irrational rotation algebras as a prime example. Thus, the question of making inductive sequences of finite dimensional algebras converge to AF algebras, not only in the sense of inductive limit, but also in terms of the quantum propinquity, is very natural and the seed of this paper.

In order to address this question, we must provide a natural construction of quantum metrics on AF algebras. A quantum metric is provided by a choice of a particular seminorm on a dense subalgebra of a C*-algebra \cite{Rieffel98a, Rieffel99, Latremoliere12b}, called a Lip-norm, which plays an analogue role as the Lipschitz seminorm does in classical metric space theory. The key property that such a seminorm must possess is that its dual must induce a metric on the state space of the underlying C*-algebra which metrizes the weak* topology. This dual metric is a noncommutative analogue of the {\mongekant}, and the idea of this approach to quantum metrics arose in Connes' work \cite{Connes89, Connes} and Rieffel's work. A pair of a unital C*-algebra and a Lip-norm is called a quantum compact metric space, and can be seen as a generalized Lipschitz algebra \cite{weaver99}. However, recent developments in noncommutative metric geometry suggests that some form of relation between the multiplicative structure of C*-algebras and Lip-norms is beneficial \cite{Rieffel09, Rieffel10c, Rieffel11, Rieffel12, Latremoliere13,Latremoliere13b,Latremoliere14,Latremoliere15}. A general form of such a connection is given by the quasi-Leibniz property \cite{Latremoliere15}. As such, we require our quantum metrics on AF algebras to be given by quasi-Leibniz Lip-norms.

Quantum metrics on AF algebras, in turn, allow us to raise further questions, such as the continuity of various important constructions of AF algebras, such that Effros-Shen AF algebras, or Glimm's UHF algebras. These later problems helped guide us to our proposed construction in this paper. We restrict ourselves to the class of unital AF algebras with a faithful tracial state, on which we construct Lip-norms from inductive sequences, the faithful tracial state, and any choice of sequence of positive numbers converging to $0$. The natural sequences to consider are given by the dimension of the C*-algebras constitutive of the inductive sequences. The requirement of a faithful tracial state allows us to construct conditional expectations from which our Lip-norms are built. 

We are then able to prove that, equipped with our metrics, and topologizing the class of quasi-Leibniz quantum compact metric spaces with the quantum propinquity, the class of UHF algebras is the continuous image, in a very natural way, of the Baire space, i.e. the space of sequences of nonzero natural numbers equipped with a standard ultrametric. We then prove that the function which, to any irrational number in $(0,1)$, associates the Effros-Shen AF algebra, becomes continuous as well. This result actually involves the fact that the set of irrational numbers in $(0,1)$ is homeomorphic to the Baire space, and then uses an argument constructed around the continuity of a field of Lip-norms on a well-chosen finite dimensional piece of the Effros-Shen AF algebra. This argument relies, in turn, on computations of certain traces on these finite dimensional algebras, using a $K$-theory argument.

We also prove that our construction for quantum metrics, when applied to the Abelian AF algebra of the $\C$-valued continuous functions on the Cantor space, recover standard ultrametrics on that space. The importance of this observation is that our construction can be seen as a generalization of ultrametrics to the context of AF algebras, which are, informally, zero dimensional quantum compact spaces. We should note that for any two states $\varphi, \psi$ in the state space $\StateSpace(\A)$ of a unital C*-algebra $\A$, the function $t \in [0,1] \mapsto t\varphi + (1-t)\psi$ is a continuous function to $\StateSpace(\A)$ equipped with the weak* topology. Thus, no metric which gives the weak* topology on the state space of a unital C*-algebra can be an ultrametric (as it would imply that the segment between $\varphi$ and $\psi$ would be disconnected, which would be a contradiction). Thus, Lip-norms never induce actual ultrametrics on state spaces, and thus our terminology will not create any confusion, and rather provide interesting candidates of possible quantum ultrametrics.

We also address a question which has proven an interesting challenge in general: the identification of certain compact classes of quasi-Leibniz quantum compact metric spaces for the quantum propinquity. It is unclear that any of the classes of AF algebras which we study in this paper are closed for the quantum propinquity, and moreover the quantum propinquity is not known to be a complete metric, so for a set, being totally bounded and closed together would not be sufficient for the set to be compact in general --- hence the challenge in finding compact classes for the quantum propinquity. The dual propinquity is complete \cite{Latremoliere13b}, which provides a better framework for the study of compactness, but as the dual propinquity is weaker than the quantum propinquity, the question of finding the closure of classes of AF algebras is generally delicate. However, in this paper, we do exhibit natural infinite compact classes of AF algebras for the quantum propinquity, using the topology of the Baire space.

Our construction should be compared with a previous attempt at the construction of natural quantum metrics on AF algebras. In \cite{Antonescu04}, Antonescu and Christensen introduced spectral triples of AF algebras endowed with a faithful state. As their spectral triples are ungraded and their Dirac operators are positive, they only contain metric information. However, the metrics obtained from these spectral triples, when restricted to the Cantor space, are not explicit and do not agree with the usual metrics for that space. Moreover, no convergence result is proven using the metrics associated with these spectral triples when working with noncommutative AF algebras, and it is not clear how one would proceed to prove such results, because the construction of these spectral triples rely on various constants which are not necessarily easy to compute. Our work takes a different perspective: quantum metrics arise naturally from Lip-norms, which need not be defined via spectral triples. Instead, we aim at obtaining natural metrics for which we can actually prove several interesting geometric results, in particular in regards with the quantum propinquity.

Our paper begins with a brief section on the notions of quantum compact metric spaces, quasi-Leibniz Lip-norms, and the quantum Gromov-Hausdorff propinquity. We then construct our Lip-norms for AF algebras. The next two sections establish our main continuity results: first for UHF algebras and second for Effros-Shen AF algebras. We then exhibit some interesting compact classes of AF algebras for the quantum propinquity. In its most general form, our construction of Lip-norms on AF algebras involve a sequence of nonzero natural numbers. We conclude our paper with the proof that our construction is in fact continuous with respect to this parameter.

\section{Quantum Metric Geometry}

We begin our exposition with a brief description of the tools of quantum metric geometry which we will use in this paper. We refer the reader to \cite{Latremoliere15b} for a survey of this area. A by-product of this exposition is also the introduction of notations which we will use throughout our paper.

\begin{notation}
When $E$ is a normed vector space, then its norm will be denoted by $\|\cdot\|_E$ by default.
\end{notation}

\begin{notation}
Let $\A$ be a unital C*-algebra. The unit of $\A$ will be denoted by $\unit_\A$. The state space of $\A$ will be denoted by $\StateSpace(\A)$ while the self-adjoint part of $\A$ will be denoted by $\sa{\A}$. 
\end{notation}

The core objects of noncommutative metric geometry are the quantum compact metric spaces, which are noncommutative generalizations of the algebras of Lipschitz functions over compact metric spaces. The key requirement in the following definition --- that the {\mongekant} metrizes the weak* topology on the state space --- is due to Rieffel. The idea to employ the {\mongekant} as a means to work with noncommutative metrics is due to Connes \cite{Connes89} and was the inspiration for Rieffel's work. The quasi-Leibniz property is the second author's added requirement, itself largely based on Kerr's similar notion of the $F$-Leibniz property \cite{kerr02}, but used for a very different reason --- the quasi-Leibniz property is used to ensure that the Gromov-Hausdorff propinquity has the desired coincidence property, while Kerr used a similar notion to study the completeness of a version of his matricial distance. In \cite{Latremoliere15}, the notion of a quasi-Leibniz Lip-norm is more general than given below, but this will suffice for this paper.

\begin{definition}[\cite{Rieffel98a, Latremoliere13, Latremoliere15}]\label{quasi-Monge-Kantorovich-def}
A {\Qqcms{(C,D)}} $(\A,\Lip)$, for some $C\geq 1$ and $D\geq 0$, is an ordered pair where $\A$ is unital C*-algebra and $\Lip$ is a seminorm defined on some dense Jordan-Lie subalgebra $\dom{\Lip}$ of $\sa{\A}$ such that:
\begin{enumerate}
\item $\{ a \in \sa{\A} : \Lip(a) = 0 \} = \R\unit_\A$,
\item the seminorm $\Lip$ is a \emph{$(C,D)$-quasi-Leibniz Lip-norm}, i.e. for all $a,b \in \dom{\Lip}$:
\begin{equation*}
\max\left\{ \Lip\left(\frac{ab+ba}{2}\right), \Lip\left(\frac{ab-ba}{2i}\right) \right\} \leq C\left(\|a\|_\A \Lip(b) + \|b\|_\A\Lip(a)\right) + D \Lip(a)\Lip(b)\text{,}
\end{equation*}
\item the \emph{\mongekant} defined, for all two states $\varphi, \psi \in \StateSpace(\A)$, by:
\begin{equation*}
\Kantorovich{\Lip} (\varphi, \psi) = \sup\left\{ |\varphi(a) - \psi(a)| : a\in\dom{\Lip}, \Lip(a) \leq 1 \right\}
\end{equation*}
metrizes the weak* topology of $\StateSpace(\A)$,
\item the seminorm $\Lip$ is lower semi-continuous with respect to $\|\cdot\|_\A$.
\end{enumerate}
The seminorm $\Lip$ of a quantum compact metric space $(\A,\Lip)$ is called a \emph{Lip-norm}.
\end{definition}

\begin{convention}
When $\Lip$ is a seminorm defined on some dense subset $F$ of a vector space $E$, we will implicitly extend $\Lip$ to $E$ by setting $\Lip(e) = \infty$ whenever $e \not \in F$.
\end{convention}

Rieffel initiated the systematic study of quantum compact metric space with the following characterization of these spaces, which can be seen as a noncommutative form of the Arz{\'e}la-Ascoli theorem.

\begin{theorem}[\cite{Rieffel98a,Rieffel99, Ozawa05}]\label{Rieffel-thm}
Let $\A$ be a unital C*-algebra and $\Lip$ a seminorm defined on a dense subspace of $\sa{\A}$, such that $\Lip(a) = 0$ if and only if $a\in\R\unit_\A$. The following two assertions are equivalent:
\begin{enumerate}
\item the {\mongekant} $\Kantorovich{\Lip}$ metrizes the weak* topology on $\StateSpace(\A)$,
\item for some state $\mu \in \StateSpace(\A)$, the set:
\begin{equation*}
\left\{ a\in\sa{\A} : \Lip(a)\leq 1, \mu(a) = 0\right\}
\end{equation*}
is totally bounded for $\|\cdot\|_\A$.
\end{enumerate}
\end{theorem}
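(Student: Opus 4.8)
The plan is to prove the equivalence through the classical duality between the Lip-norm and the metric it induces on the state space, with the Arzel\`a--Ascoli theorem as the central tool. Two standing facts will be used throughout: that $\StateSpace(\A)$ is weak*-compact by the Banach--Alaoglu theorem, and that the evaluation map $a \mapsto \widehat{a}$, where $\widehat{a}(\varphi) = \varphi(a)$, is a linear isometry from $\sa{\A}$ (with $\|\cdot\|_\A$) into the space $C_\R(\StateSpace(\A))$ of real-valued weak*-continuous functions with the uniform norm, since $\|a\|_\A = \sup_{\varphi \in \StateSpace(\A)} |\varphi(a)|$ for self-adjoint $a$. Writing $\mathcal{B}_\mu = \{a \in \sa{\A} : \Lip(a) \leq 1, \ \mu(a) = 0\}$, note that $0 \in \mathcal{B}_\mu$ and that each $\widehat{a}$ with $a \in \mathcal{B}_\mu$ vanishes at $\mu$. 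The observation that drives both directions is immediate from the definition of $\Kantorovich{\Lip}$: whenever $\Lip(a) \leq 1$, the function $\widehat{a}$ is $1$-Lipschitz for $\Kantorovich{\Lip}$, because $|\widehat{a}(\varphi) - \widehat{a}(\psi)| = |\varphi(a) - \psi(a)| \leq \Kantorovich{\Lip}(\varphi, \psi)$.

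For the implication (1) $\Rightarrow$ (2), I would start from the fact that if $\Kantorovich{\Lip}$ metrizes the weak* topology, then $(\StateSpace(\A), \Kantorovich{\Lip})$ is compact, being homeomorphic to the weak*-compact $\StateSpace(\A)$; in particular it has finite diameter. The family $\{\widehat{a} : a \in \mathcal{B}_\mu\}$ is then equicontinuous, each member being $1$-Lipschitz, and uniformly bounded, since $|\widehat{a}(\varphi)| = |\widehat{a}(\varphi) - \widehat{a}(\mu)| \leq \Kantorovich{\Lip}(\varphi, \mu)$ is controlled by the finite diameter. Arzel\`a--Ascoli then gives that this family is totally bounded in the uniform norm; transporting back through the isometry $a \mapsto \widehat{a}$ shows $\mathcal{B}_\mu$ is totally bounded for $\|\cdot\|_\A$, which is assertion (2).

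For the converse (2) $\Rightarrow$ (1), I would first verify that $\Kantorovich{\Lip}$ is a genuine finite, point-separating metric. Finiteness follows by writing any $a$ with $\Lip(a) \leq 1$ as $(a - \mu(a)\unit_\A) + \mu(a)\unit_\A$: the first summand lies in $\mathcal{B}_\mu$, hence has norm at most $\mathrm{diam}(\mathcal{B}_\mu)$ (finite by total boundedness and $0 \in \mathcal{B}_\mu$), while any two states agree on the scalar part, so $|\varphi(a) - \psi(a)| \leq 2\,\mathrm{diam}(\mathcal{B}_\mu)$. Point separation uses the density of $\dom{\Lip}$ in $\sa{\A}$: if $\varphi \neq \psi$, a self-adjoint element distinguishing them may be approximated in norm by one in $\dom{\Lip}$ still distinguishing them, and such an element has nonzero Lip-norm because its kernel is $\R\unit_\A$. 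The same splitting yields $\Kantorovich{\Lip}(\varphi, \psi) = \sup_{a \in \mathcal{B}_\mu} |\varphi(a) - \psi(a)|$, so that a finite $\varepsilon$-net for the norm-totally-bounded $\mathcal{B}_\mu$ reduces the supremum to finitely many evaluations; consequently weak*-convergence $\varphi_n \to \varphi$ forces $\Kantorovich{\Lip}(\varphi_n, \varphi) \to 0$. Thus the identity map from $\StateSpace(\A)$ with its weak* topology onto $(\StateSpace(\A), \Kantorovich{\Lip})$ is a continuous bijection from a compact space to a Hausdorff space, hence a homeomorphism, which is precisely statement (1).

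The hard part will be the forward direction (1) $\Rightarrow$ (2); its real content is organizing the Lip-ball, modulo scalars, as an equicontinuous and uniformly bounded family on the compact metric space $(\StateSpace(\A), \Kantorovich{\Lip})$ so that Arzel\`a--Ascoli applies, together with the isometric identification of self-adjoint elements with their transforms $\widehat{a}$, which is exactly what converts total boundedness in the uniform norm back into total boundedness for $\|\cdot\|_\A$ and closes the argument.
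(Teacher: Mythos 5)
This theorem is quoted in the paper from \cite{Rieffel98a, Rieffel99, Ozawa05} without proof, so there is no internal argument to compare against; judged on its own, your proof is correct and is essentially the standard Arzel\`a--Ascoli duality argument found in those references: the forward direction views the Lip-ball, normalized at $\mu$, as a uniformly bounded equicontinuous family of functions on the compact metric space $(\StateSpace(\A),\Kantorovich{\Lip})$, and the converse uses the $\varepsilon$-net reduction plus the compact-to-Hausdorff homeomorphism trick. One small technical point to tighten in the converse: you argue that weak*-convergent \emph{sequences} are $\Kantorovich{\Lip}$-convergent and conclude continuity of the identity map, but for non-separable $\A$ the weak* topology on $\StateSpace(\A)$ need not be first countable, so sequential continuity does not automatically give continuity --- and you cannot assume metrizability, since that is what is being proven. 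The fix is immediate: your finite $\varepsilon$-net estimate $\Kantorovich{\Lip}(\varphi_\lambda,\varphi) \leq \max_i |\varphi_\lambda(a_i) - \varphi(a_i)| + 2\varepsilon$ works verbatim for nets, giving genuine continuity.
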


Our primary interest in developing a theory of quantum metric spaces is the introduction of various hypertopologies on classes of such spaces, thus allowing us to study the geometry of classes of C*-algebras and perform analysis on these classes. A classical model for our hypertopologies is given by the Gromov-Hausdorff distance. While several noncommutative analogues of the Gromov-Hausdorff distance have been proposed --- most importantly Rieffel's original construction of the quantum Gromov-Hausdorff distance \cite{Rieffel00} --- we shall work with a particular metric introduced by the second author. This metric, known as the quantum propinquity, is designed to be best suited to {\gQqcms s}, and in particular, is zero between two such spaces if and only if they are isometrically isomorphic (unlike Rieffel's distance). We now propose a summary of the tools needed to compute upper bounds on this metric.

\begin{definition}
The \emph{$1$-level set} $\StateSpace_1(\D|\omega)$ of an element $\omega$ of a unital C*-algebra $\D$ is:
\begin{equation*}
\left\{ \varphi \in \StateSpace(\D) : \varphi((1-\omega^\ast\omega))=\varphi((1-\omega \omega^\ast)) = 0 \right\}\text{.}
\end{equation*}
\end{definition}

\begin{definition}\label{bridge-def}
A \emph{bridge} from $\A$ to $\B$, where $\A$ and $\B$ are unital C*-algebras, is a quadruple $(\D,\pi_\A,\pi_\B,\omega)$ where:
\begin{enumerate}
\item $\D$ is a unital C*-algebra,
\item the element $\omega$, called the \emph{pivot} of the bridge, satisfies $\omega\in\D$ and $\StateSpace_1(\D|\omega) \not=\emptyset$,
\item $\pi_\A : \A\hookrightarrow \D$ and $\pi_\B : \B\hookrightarrow\D$ are unital *-monomorphisms.
\end{enumerate}
\end{definition}

There always exists a bridge between any two arbitrary {\gQqcms s} \cite{Latremoliere13,Latremoliere15}. A bridge allows us to define a numerical quantity which estimates, for this given bridge, how far our {\gQqcms s} are. This quantity, called the length of the bridge, is constructed using two other quantities we now define. 

In the next few definitions, we denote by $\Haus{\mathrm{d}}$ the \emph{Hausdorff (pseudo)distance} induced by a (pseudo)distance $\mathrm{d}$ on the compact subsets of a (pseudo)metric space $(X,\mathrm{d})$ \cite{Hausdorff}.

The height of a bridge assesses the error we make by replacing the state spaces of the {\Lqcms s} with the image of the $1$-level set of the pivot of the bridge, using the ambient {\mongekant}. 

\begin{definition}
Let $(\A,\Lip_\A)$ and $(\B,\Lip_\B)$ be two {\gQqcms s}. The \emph{height} $\bridgeheight{\gamma}{\Lip_\A,\Lip_\B}$ of a bridge $\gamma = (\D,\pi_\A,\pi_\B,\omega)$ from $\A$ to $\B$, and with respect to $\Lip_\A$ and $\Lip_\B$, is given by:
\begin{equation*}
\max\left\{ \Haus{\Kantorovich{\Lip_\A}}(\StateSpace(\A), \pi_\A^\ast(\StateSpace_1(\D|\omega))), \Haus{\Kantorovich{\Lip_\B}}(\StateSpace(\B), \pi_\B^\ast(\StateSpace_1(\D|\omega))) \right\}\text{,}
\end{equation*}
where $\pi_\A^{\ast}$ and $\pi_\B^\ast$ are the dual maps of $\pi_\A$ and $\pi_\B$, respectively.
\end{definition}

The second quantity measures how far apart the images of the balls for the Lip-norms are in $\A\oplus\B$; to do so, they use a seminorm on $\A\oplus\B$ built using the bridge:
\begin{definition}
Let $(\A,\Lip_\A)$ and $(\B,\Lip_\B)$ be two unital C*-algebras. The \emph{bridge seminorm} $\bridgenorm{\gamma}{\cdot}$ of a bridge $\gamma = (\D,\pi_\A,\pi_\B,\omega)$ from $\A$ to $\B$ is the seminorm defined on $\A\oplus\B$ by:
\begin{equation*}
\bridgenorm{\gamma}{a,b} = \|\pi_\A(a)\omega - \omega\pi_\B(b)\|_\D
\end{equation*}
for all $(a,b) \in \A\oplus\B$.
\end{definition}

We implicitly identify $\A$ with $\A\oplus\{0\}$ and $\B$ with $\{0\}\oplus\B$ in $\A\oplus\B$ in the next definition, for any two spaces $\A$ and $\B$.

\begin{definition}
Let $(\A,\Lip_\A)$ and $(\B,\Lip_\B)$ be two {\gQqcms s}. The \emph{reach} $\bridgereach{\gamma}{\Lip_\A,\Lip_\B}$ of a bridge $\gamma = (\D,\pi_\A,\pi_\B,\omega)$ from $\A$ to $\B$, and with respect to $\Lip_\A$ and $\Lip_\B$, is given by:
\begin{equation*}
\Haus{\bridgenorm{\gamma}{\cdot}}\left( \left\{a\in\sa{\A} : \Lip_\A(a)\leq 1\right\} , \left\{ b\in\sa{\B} : \Lip_\B(b) \leq 1 \right\}  \right) \text{.}
\end{equation*}
\end{definition}

We thus choose a natural quantity to synthesize the information given by the height and the reach of a bridge:

\begin{definition}
Let $(\A,\Lip_\A)$ and $(\B,\Lip_\B)$ be two {\gQqcms s}. The \emph{length} $\bridgelength{\gamma}{\Lip_\A,\Lip_\B}$ of a bridge $\gamma = (\D,\pi_\A,\pi_\B,\omega)$ from $\A$ to $\B$, and with respect to $\Lip_\A$ and $\Lip_\B$, is given by:
\begin{equation*}
\max\left\{\bridgeheight{\gamma}{\Lip_\A,\Lip_\B}, \bridgereach{\gamma}{\Lip_\A,\Lip_\B}\right\}\text{.}
\end{equation*}
\end{definition}

While a natural approach, defining the quantum propinquity as the infimum of the length of all possible bridges between two given {\Qqcms{(C,D)}s}, for some fixed $C\geq 1$ and $D\geq 0$, does not lead to a distance, as the triangle inequality may not be satisfied. Instead, a more subtle road must be taken, as exposed in details in \cite{Latremoliere13}. The following theorem hides these complications and provide a summary of the conclusions of \cite{Latremoliere13} relevant for our work:

\begin{theorem-definition}[\cite{Latremoliere13, Latremoliere15}]\label{def-thm}
Fix $C\geq 1$ and $D \geq 0$. Let $\mathcal{QQCMS}_{C,D}$ be the class of all {\Qqcms{(C,D)}s}. There exists a class function $\qpropinquity{C,D}$ from $\mathcal{QQCMS_{C,D}}\times\mathcal{QQCMS}_{C,D}$ to $[0,\infty) \subseteq \R$ such that:
\begin{enumerate}
\item for any $(\A,\Lip_\A), (\B,\Lip_\B) \in \mathcal{QQCMS}_{C,D}$ we have:
\begin{equation*}
 \qpropinquity{C,D}((\A,\Lip_\A),(\B,\Lip_\B)) \leq \max\left\{\diam{\StateSpace(\A)}{\Kantorovich{\Lip_\A}}, \diam{\StateSpace(\B)}{\Kantorovich{\Lip_\B}}\right\}\text{,}
\end{equation*}
\item for any $(\A,\Lip_\A), (\B,\Lip_\B) \in \mathcal{QQCMS}_{C,D}$ we have:
\begin{equation*}
0\leq \qpropinquity{C,D}((\A,\Lip_\A),(\B,\Lip_\B)) = \qpropinquity{C,D}((\B,\Lip_\B),(\A,\Lip_\A))
\end{equation*}
\item for any $(\A,\Lip_\A), (\B,\Lip_\B), (\alg{C},\Lip_{\alg{C}}) \in \mathcal{QQCMS}_{C,D}$ we have:
\begin{equation*}
\qpropinquity{C,D}((\A,\Lip_\A),(\alg{C},\Lip_{\alg{C}})) \leq \qpropinquity{C,D}((\A,\Lip_\A),(\B,\Lip_\B)) + \qpropinquity{C,D}((\B,\Lip_\B),(\alg{C},\Lip_{\alg{C}}))\text{,}
\end{equation*}
\item for all  for any $(\A,\Lip_\A), (\B,\Lip_\B) \in \mathcal{QQCMS}_{C,D}$ and for any bridge $\gamma$ from $\A$ to $\B$, we have:
\begin{equation*}
\qpropinquity{C,D}((\A,\Lip_\A), (\B,\Lip_\B)) \leq \bridgelength{\gamma}{\Lip_\A,\Lip_\B}\text{,}
\end{equation*}
\item for any $(\A,\Lip_\A), (\B,\Lip_\B) \in \mathcal{QQCMS}_{C,D}$, we have:
\begin{equation*}
\qpropinquity{C,D}((\A,\Lip_\A),(\B,\Lip_\B)) = 0
\end{equation*}
if and only if $(\A,\Lip_\A)$ and $(\B,\Lip_\B)$ are isometrically isomorphic, i.e. if and only if there exists a *-isomorphism $\pi : \A \rightarrow\B$ with $\Lip_\B\circ\pi = \Lip_\A$, or equivalently there exists a *-isomorphism $\pi : \A \rightarrow\B$ whose dual map $\pi^\ast$ is an isometry from $(\StateSpace(\B),\Kantorovich{\Lip_\B})$ into $(\StateSpace(\A),\Kantorovich{\Lip_\A})$,
\item if $\Xi$ is a class function from $\mathcal{QQCMS}_{C,D}\times \mathcal{QQCMS}_{C,D}$ to $[0,\infty)$ which satisfies Properties (2), (3) and (4) above, then:
\begin{equation*}
\Xi((\A,\Lip_\A), (\B,\Lip_\B)) \leq \qpropinquity{C,D}((\A,\Lip_\A),(\B,\Lip_\B))
\end{equation*}
 for all $(\A,\Lip_\A)$ and $(\B,\Lip_\B)$ in $\mathcal{QQCMS}_{C,D}$
\end{enumerate}
\end{theorem-definition}

Thus, for a fixed choice of $C\geq 1$ and $D\geq 0$, the quantum propinquity is the largest pseudo-distance on the class of {\Qqcms{(C,D)}s} which is bounded above by the length of any bridge between its arguments; the remarkable conclusion of \cite{Latremoliere13} is that this pseudo-metric is in fact a metric up to isometric isomorphism. The quantum propinquity was originally devised in the framework on Leibniz quantum compact metric spaces (i.e. for the case $C=1$ and $D=0$), and as seen in \cite{Latremoliere15}, can be extended to many different classes of quasi-Leibniz compact quantum metric spaces.

Moreover, we showed in \cite{Latremoliere13} that we can compare the quantum propinquity to natural metrics.

\begin{theorem}[\cite{Latremoliere13}]
If $\dist_q$ is Rieffel's quantum Gromov-Hausdorff distance \cite{Rieffel00}, then for any pair $(\A,\Lip_\A)$ and $(\B,\Lip_\B)$ of {\gQqcms s}, we have:
\begin{equation*}
\dist_q((\A,\Lip_\A),(\B,\Lip_\B)) \leq \qpropinquity{}((\A,\Lip_\A),(\B,\Lip_\B)) \text{.}
\end{equation*}
Moreover, for any compact metric space $(X,\mathrm{d}_X)$, let $\Lip_{\mathrm{d}_X}$ be the Lipschitz seminorm induced on the C*-algebra $C(X)$ of $\C$-valued continuous functions on $X$ by $\mathrm{d}_X$. Note that $(C(X),\Lip_{\mathrm{d}_X})$ is a {\Lqcms}. Let $\mathfrak{C}$ be the class of all compact metric spaces. For any $(X,\mathrm{d}_x), (Y,\mathrm{d_Y}) \in \mathfrak{C}$, we have:
\begin{equation*}
\qpropinquity{}\left(\left(C(X),\Lip_{\mathrm{d}_X}\right), \left(C(Y),\Lip_{\mathrm{d}_Y}\right)\right) \leq \mathrm{GH}((X,\mathrm{d}_X),(Y,\mathrm{d}_Y))
\end{equation*}
where $\mathrm{GH}$ is the Gromov-Hausdorff distance \cite{Gromov81, Gromov}. 

Furthermore, the class function $\Upsilon : (X,\mathrm{d}_X) \in \mathfrak{C} \mapsto \left(C(X),\Lip_{\mathrm{d}_X}\right)$ is a homeomorphism, where the topology on $\mathfrak{C}$  is given by the Gromov-Hausdorff distance $\mathrm{GH}$, and the topology on the image of $\Upsilon$ (as a subclass of the class of all {\Lqcms s}) is given by the quantum propinquity $\qpropinquity{}$.
\end{theorem}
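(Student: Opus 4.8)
\textit{Overview.} The statement has three independent pieces: the inequality $\dist_q\leq\qpropinquity{}$, the inequality $\qpropinquity{}\leq\mathrm{GH}$ on commutative spaces, and the homeomorphism claim for $\Upsilon$. I would treat them in that order, since the last one reuses the first two.

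\textit{Part 1: $\dist_q\leq\qpropinquity{}$.} The cleanest route is the maximality property, Theorem-Definition \ref{def-thm}(6): it suffices to check that Rieffel's class function $\dist_q$ satisfies Properties (2), (3) and (4) of that theorem, whereupon $\dist_q\leq\qpropinquity{}$ follows. Symmetry (2) and the triangle inequality (3) are part of Rieffel's theorem that $\dist_q$ is a pseudo-distance, so the entire content is Property (4): for every bridge $\gamma=(\D,\pi_\A,\pi_\B,\omega)$ one must show $\dist_q((\A,\Lip_\A),(\B,\Lip_\B))\leq\bridgelength{\gamma}{\Lip_\A,\Lip_\B}$. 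I would manufacture from $\gamma$ an admissible Lip-norm on $\A\oplus\B$ of the form $L(a,b)=\max\{\Lip_\A(a),\Lip_\B(b),\bridgenorm{\gamma}{a,b}/r\}$, and verify two things: first, that $L$ is genuinely a Lip-norm, which is where Theorem \ref{Rieffel-thm} enters, its unit ball being totally bounded because it is contained in the totally bounded balls of $\Lip_\A$ and $\Lip_\B$; second, that the two quotient seminorms of $L$ recover $\Lip_\A$ and $\Lip_\B$, which is exactly where the reach is used, since $\bridgereach{\gamma}{\Lip_\A,\Lip_\B}$ provides, for each Lip-ball element on one side, a companion on the other with small bridge seminorm. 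One then estimates the Hausdorff distance between the embedded copies of $\StateSpace(\A)$ and $\StateSpace(\B)$ inside $\StateSpace(\A\oplus\B)$ for $\Kantorovich{L}$, using the height to transport a state into $\StateSpace_1(\D|\omega)$ and a Cauchy--Schwarz estimate against the pivot on the $1$-level set to pass between $\A$ and $\B$; this yields the bound by $\bridgelength{\gamma}{\Lip_\A,\Lip_\B}$, exactly as in the length estimates of \cite{Latremoliere13}. The bookkeeping of constants here is the one technical subtlety.

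\textit{Part 2: $\qpropinquity{}\leq\mathrm{GH}$ for $\Upsilon(X),\Upsilon(Y)$.} By Theorem-Definition \ref{def-thm}(4) it is enough, for each $\varepsilon>0$, to produce a single bridge of length at most $\eta:=\mathrm{GH}((X,\mathrm{d}_X),(Y,\mathrm{d}_Y))+\varepsilon$. I would take $\D=C(X\times Y)$ with $\pi_X,\pi_Y$ the injective unital $\ast$-homomorphisms induced by the two coordinate projections $\mathrm{pr}_X,\mathrm{pr}_Y$. Fixing a coupling metric on $X\sqcup Y$ that realizes the Gromov--Hausdorff distance up to $\varepsilon$, let $R\subseteq X\times Y$ be the associated closed correspondence (pairs at coupling-distance at most $\eta$): it surjects onto each factor and has metric distortion at most $2\eta$. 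For the pivot I would choose a continuous $\omega$ with $0\leq\omega\leq1$, $\{|\omega|=1\}=R$, supported in a thin neighborhood of $R$. The decisive point is that the height \emph{vanishes}: since $R$ surjects onto $X$ and onto $Y$, pushforwards of probability measures on $R$ already exhaust all of $\StateSpace(C(X))$ and $\StateSpace(C(Y))$, so each image of $\StateSpace_1(\D|\omega)$ is the full state space. The reach becomes $\sup_{(x,y)}\omega(x,y)\,|f(x)-g(y)|$ over Lip-balls, which a McShane-type extension of $f$ across the correspondence $R$ bounds by $\eta$ up to the neighborhood width. Hence the bridge length is at most $\eta$, and $\varepsilon\to0$ gives the inequality.

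\textit{Part 3: $\Upsilon$ is a homeomorphism onto its image.} First I would record injectivity up to isometry: an isometric isomorphism of $(C(X),\Lip_{\mathrm{d}_X})$ and $(C(Y),\Lip_{\mathrm{d}_Y})$ is, by Banach--Stone, composition with a homeomorphism $h$, and compatibility with the Lip-norms forces $h$ to be an isometry because $\mathrm{d}_X(x,x')=\sup\{|f(x)-f(x')|:\Lip_{\mathrm{d}_X}(f)\leq1\}$ and likewise for $Y$; together with the coincidence property \ref{def-thm}(5) this makes the image a genuine metric space on which $\Upsilon$ is an injection, continuous by Part 2. The remaining, and hardest, point is continuity of $\Upsilon^{-1}$, namely that $\qpropinquity{}(\Upsilon(X_n),\Upsilon(X))\to0$ forces $\mathrm{GH}(X_n,X)\to0$. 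I would route this through Rieffel's distance: Part 1 gives $\dist_q(\Upsilon(X_n),\Upsilon(X))\to0$, and Rieffel's theorem \cite{Rieffel00} that the classical compact metric spaces embed homeomorphically into his quantum metric spaces returns $\mathrm{GH}(X_n,X)\to0$. Squeezing the three topologies --- Gromov--Hausdorff convergence implies propinquity convergence (Part 2), which implies $\dist_q$-convergence (Part 1), which implies Gromov--Hausdorff convergence (Rieffel) --- shows they coincide on the image. I expect this last implication to be the main obstacle: unlike the forward bound it cannot be obtained by exhibiting one good bridge, and a naive localization fails since Gromov--Hausdorff balls around a fixed space are not precompact; it genuinely requires a reverse topological comparison, for which borrowing Rieffel's homeomorphism theorem is far cleaner than attempting a direct propinquity-to-$\mathrm{GH}$ estimate.
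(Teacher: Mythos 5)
The first thing to note is that the paper you were given does not actually prove this statement: it is imported verbatim, with citation, from \cite{Latremoliere13}, as background for the convergence results about AF algebras. So the only meaningful comparison is with the proof in that reference, and your reconstruction follows it closely. Part 1 is exactly the intended mechanism: $\dist_q$ is a symmetric pseudo-metric satisfying the triangle inequality, so by the maximality property (item (6) of Theorem-Definition (\ref{def-thm})) the whole content is the bound $\dist_q \leq \bridgelength{\gamma}{\Lip_\A,\Lip_\B}$ for a single bridge, which \cite{Latremoliere13} obtains from the very admissible Lip-norm you write down (with $r$ equal to the length plus $\varepsilon$, to avoid dividing by a possibly vanishing reach). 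Part 2 is likewise the construction of the cited paper: a bridge on $C(X\times Y)$ with coordinate embeddings and a positive contractive pivot equal to $1$ precisely on a correspondence realizing the Gromov--Hausdorff distance up to $\varepsilon$, with the reach controlled by a McShane extension across the correspondence; for the vanishing of the height you can avoid any measurable-selection argument by noting that the image of $\StateSpace_1(\D|\omega)$ under $\pi_X^\ast$ is weak* compact, convex, and contains every Dirac mass $\delta_x$ (lift $x$ to any $(x,y)$ in the correspondence), hence equals $\StateSpace(C(X))$. Part 3, the squeeze $\mathrm{GH}\Rightarrow\qpropinquity{}\Rightarrow\dist_q\Rightarrow\mathrm{GH}$, with Rieffel's theorem from \cite{Rieffel00} supplying the final implication, is also how \cite{Latremoliere13} concludes, and you are right that this reverse direction is the one step that cannot be produced by exhibiting a bridge.

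One caveat is worth recording. Your Cauchy--Schwarz step in Part 1 needs states $\varphi$ in the $1$-level set to satisfy $\varphi(d\omega)=\varphi(\omega d)=\varphi(d)$ for all $d\in\D$, so that $\varphi\circ\pi_\A(a)$ and $\varphi\circ\pi_\B(b)$ differ by at most $\bridgenorm{\gamma}{a,b}$. This holds when the $1$-level set is cut out by $\varphi\left((1-\omega)^\ast(1-\omega)\right)=\varphi\left((1-\omega)(1-\omega)^\ast\right)=0$, which is the formulation of \cite{Latremoliere13}; the definition printed in the present paper, via $1-\omega^\ast\omega$ and $1-\omega\omega^\ast$, is strictly weaker for general pivots (any unitary pivot has full $1$-level set under it, and the estimate fails), although the two definitions coincide for positive contractive pivots, which covers your Part 2 and every pivot actually used in this paper. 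This discrepancy originates in the paper's exposition, not in your argument, but since you must quantify over \emph{all} bridges in Part 1, you should make explicit that you are using the $1$-level set in the sense of \cite{Latremoliere13}.
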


As we noted, the construction and many more information on the quantum Gromov-Hausdorff propinquity can be found in our original paper \cite{Latremoliere13} on this topic, as well as in our survey \cite{Latremoliere15b}. The extension of our original work to the quasi-Leibniz setting can be found in \cite{Latremoliere15}. Two very important examples of nontrivial convergences for the quantum propinquity are given by quantum tori and their finite dimensional approximations, as well as certain metric perturbations \cite{Latremoliere05,Latremoliere13c,Latremoliere15c} and by matrix approximations of the C*-algebras of coadjoint orbits for semisimple Lie groups \cite{Rieffel10c, Rieffel11,Rieffel15}. Moreover, the quantum propinquity is, in fact, a special form of the dual Gromov-Hausdorff propinquity \cite{Latremoliere13b, Latremoliere14, Latremoliere15}, which is a complete metric, up to isometric isomorphism, on the class of {\Lqcms s}, and which extends the topology of the Gromov-Hausdorff distance as well. Thus, as the dual propinquity is dominated by the quantum propinquity \cite{Latremoliere13b}, we conclude that \emph{all the convergence results in this paper are valid for the dual Gromov-Hausdorff propinquity as well.}

The present paper establishes new examples of convergence for the quantum propinquity by constructing quantum metrics on certain AF algebras. All our quantum metrics will be {\Qqcms{(2,0)}s}. Thus, we will simplify our notation as follows:
\begin{convention}
In this paper, $\qpropinquity{}$ will be meant for $\qpropinquity{2,0}$.
\end{convention}

\section{AF algebras as Quasi-Leibniz Quantum Compact Metric Spaces}

We begin by observing that conditional expectations allow us to define $(2,0)$-quasi-Leibniz seminorms on C*-algebras.

\begin{definition}
A \emph{conditional expectation} $\CondExp{\cdot}{\B}:\A\rightarrow\B$ onto $\B$, where $\A$ is a C*-algebra and $\B$ is a C*-subalgebra of $\A$, is a linear positive map of norm $1$ such that for all $b, c \in \B$ and $a\in \A$ we have:
\begin{equation*}
\CondExp{b a c}{\B} = b \CondExp{a}{\B} c\text{.}
\end{equation*}
\end{definition}

\begin{lemma}\label{expt-seminorm-lemma}
Let $\A$ be a C*-algebra and $\B \subseteq\A$ be a C*-subalgebra of $\A$. If $\CondExp{\cdot}{\B} : \A\mapsto \B$ is a conditional expectation onto $\B$, then the seminorm:
\begin{equation*}
\mathrm{S} : a\in \A \mapsto \|a-\CondExp{a}{\B}\|_\A
\end{equation*}
is a $(2,0)$-quasi-Leibniz seminorm. 
\end{lemma}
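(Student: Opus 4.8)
The seminorm property of $\mathrm{S}$ is immediate: $\mathrm{S} = \|\cdot\|_\A \circ (\mathrm{id}_\A - \CondExp{\cdot}{\B})$ is the composition of the norm with a linear map, hence subadditive and absolutely homogeneous. The substance of the lemma is the quasi-Leibniz inequality, and the plan is to first prove the sharper, fully multiplicative estimate
\[
\mathrm{S}(ab) \leq 2\|b\|_\A\,\mathrm{S}(a) + \|a\|_\A\,\mathrm{S}(b) \quad\text{for all } a,b\in\A,
\]
and then to deduce the stated bound for the Jordan and Lie products by subadditivity and homogeneity of $\mathrm{S}$.

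To prove the displayed estimate, write $E = \CondExp{\cdot}{\B}$ and $\Phi = \mathrm{id}_\A - E$, so that $\mathrm{S} = \|\Phi(\cdot)\|_\A$. I would first record the three standard structural properties of a conditional expectation that I intend to use: $E$ is contractive ($\|E(x)\|_\A \leq \|x\|_\A$, since $\|E\| = 1$), $E$ is idempotent (equivalently $E\circ\Phi = 0$), and $E$ is a $\B$-bimodule map, so in particular $E(E(a)b) = E(a)E(b)$. The heart of the argument is the algebraic identity
\[
\Phi(ab) = \Phi(a)\,b + E(a)\,\Phi(b) - E\bigl(\Phi(a)\,b\bigr),
\]
which I would verify by expanding $\Phi = \mathrm{id}_\A - E$ on the right-hand side and using $E(E(a)b) = E(a)E(b)$ to cancel the cross terms, leaving $ab - E(ab) = \Phi(ab)$. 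Taking norms and bounding $\|\Phi(a)b\|_\A \leq \mathrm{S}(a)\|b\|_\A$, $\|E(a)\Phi(b)\|_\A \leq \|a\|_\A\,\mathrm{S}(b)$, and $\|E(\Phi(a)b)\|_\A \leq \|\Phi(a)b\|_\A \leq \mathrm{S}(a)\|b\|_\A$ then yields the estimate.

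With the multiplicative estimate in hand, applying it to both $ab$ and $ba$ and using subadditivity gives $\mathrm{S}(ab) + \mathrm{S}(ba) \leq 4\bigl(\|a\|_\A\,\mathrm{S}(b) + \|b\|_\A\,\mathrm{S}(a)\bigr)$. Since $\mathrm{S}\bigl(\frac{ab+ba}{2}\bigr) \leq \frac12\bigl(\mathrm{S}(ab)+\mathrm{S}(ba)\bigr)$ and likewise $\mathrm{S}\bigl(\frac{ab-ba}{2i}\bigr) = \frac12\mathrm{S}(ab-ba) \leq \frac12\bigl(\mathrm{S}(ab)+\mathrm{S}(ba)\bigr)$, both quantities are bounded by $2\bigl(\|a\|_\A\,\mathrm{S}(b) + \|b\|_\A\,\mathrm{S}(a)\bigr)$, which is exactly the $(2,0)$-quasi-Leibniz inequality with $C = 2$ and $D = 0$.

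The step I expect to be the main obstacle is locating the correct identity for $\Phi(ab)$. The naive symmetric decomposition $\Phi(ab) = \Phi(a)E(b) + E(a)\Phi(b) + (\mathrm{id}_\A - E)(\Phi(a)\Phi(b))$ leaves a term of size $\mathrm{S}(a)\mathrm{S}(b)$, which would only produce a nonzero Leibniz defect $D$. The asymmetric identity above is what converts that quadratic term into the linear contribution $\|E(\Phi(a)b)\|_\A \leq \mathrm{S}(a)\|b\|_\A$, exploiting that $E$ annihilates $\Phi(a)$, and is precisely what delivers $D = 0$.
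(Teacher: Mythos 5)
Your proof is correct and is essentially the paper's argument: your identity $\Phi(ab) = \Phi(a)b + E(a)\Phi(b) - E\bigl(\Phi(a)b\bigr)$ is the mirror image of the telescoping decomposition the paper runs through triangle inequalities (the paper keeps $a$ whole and splits $b$, getting $\mathrm{S}(ab) \leq 2\|a\|_\A\mathrm{S}(b) + \|b\|_\A\mathrm{S}(a)$, while yours gives $2\|b\|_\A\mathrm{S}(a) + \|a\|_\A\mathrm{S}(b)$), and both rest on exactly the same two facts, namely contractivity of $E$ and the $\B$-bimodule property $E(E(a)b) = E(a)E(b)$. The only difference is presentational: you state the decomposition as a clean algebraic identity and spell out the passage from the multiplicative estimate to the Jordan and Lie product bounds, a step the paper leaves implicit.
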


\begin{proof}
Let $a,b \in \A$. We have:
\begin{equation*}
\begin{split}
\mathrm{S}(ab) &= \|ab-\CondExp{ab}{\B}\|_\A \\
&\leq \|ab - a\CondExp{b}{\B}\|_\A + \|a\CondExp{b}{\B} - \CondExp{ab}{\B}\|_\A\\
&\leq \|a\|_\A\|b-\CondExp{b}{\B}\|_\A \\
&\quad + \|a\CondExp{b}{\B} - \CondExp{a\CondExp{b}{\B}}{\B} + \CondExp{a(\CondExp{b}{\B}-b)}{\B} \|_\A\\
&\leq \|a\|_\A\|b-\CondExp{b}{\B}\|_\A + \|a-\CondExp{a}{\B})\|_\A\|\CondExp{b}{\B}\|_\A \\
&\quad + \|\CondExp{a(b-\CondExp{b}{\B})}{\B}\|_\A\\
&\leq \|a\|_\A\|b-\CondExp{b}{\B}\|_\A + \|a-\CondExp{a}{\B}\|_\A\|\CondExp{b}{\B}\|_\A \\
&\quad + \|a\|_\A\|b-\CondExp{b}{\B}\|_\A\\
&\leq 2\|a\|_\A\|b-\CondExp{b}{\B}\|_\A + \|a-\CondExp{a}{\B})\|_\A\|b\|_\A \\
&\leq 2\left(\|a\|_\A \mathrm{S}(b) + \|b\|_\A\mathrm{S}(a)\right) \text{.}
\end{split}
\end{equation*}
This proves our lemma.
\end{proof}

Note that the seminorms defined by Lemma (\ref{expt-seminorm-lemma}) are zero exactly on the range of the conditional expectation. Now, our purpose is to define quasi-Leibniz Lip-norms on AF C*-algebras using Lemma (\ref{expt-seminorm-lemma}) and a construction familiar in Von Neumann theory, which we recall here for our purpose.

We shall work with unital AF algebras \cite{Bratteli79} endowed with a faithful tracial state. Any unital AF algebra admits at least one tracial state \cite[Proposition 3.4.11]{Lin2001}, and thus simple AF algebras admit at least one faithful tracial state. In fact, the space of tracial states of unital simple AF algebras can be any Choquet simplex \cite{Goodearl77, Blackadar80}. On the other hand, a unital AF algebra has a faithful trace if, and only if it is a C*-subalgebra of a unital simple AF algebra \cite[Corollary 4.3]{Lin08}. Examples of unital AF algebras without a faithful trace can be obtained as essential extensions of the algebra of compact operators of a separable Hilbert space by some full matrix  algebra. Thus, our context could be stated as the study of certain Lip-norms on unital AF algebras which can be embedded in unital simple AF algebras.

Our main construction of Lip-norms on unital AF algebras with a faithful tracial state is summarized in the following theorem.

\begin{notation}
Let $\mathcal{I} = (\A_n,\alpha_n)_{n\in\N}$ be an inductive sequence with limit $\A=\varinjlim \mathcal{I}$. We denote the canonical *-morphisms $\A_n \rightarrow\A$ by $\indmor{\alpha}{n}$ for all $n\in\N$.
\end{notation}

\begin{convention}
We assume that for all the inductive sequences $(\A_n,\alpha_n)_{n\in\N}$ in this paper, the C*-algebras $\A_n$ are unital and the *-morphisms $\alpha_n$ are unital and injective for all $n\in\N$. 
\end{convention}

\begin{theorem}\label{AF-lip-norms-thm}
Let $\A$ be an AF algebra endowed with a faithful tracial state $\mu$. Let $\mathcal{I} = (\A_n,\alpha_n)_{n\in\N}$ be an inductive sequence of finite dimensional C*-algebras with C*-inductive limit $\A$, with $\A_0 = \C$ and where $\alpha_n$ is unital and injective for all $n\in\N$.

Let $\pi$ be the GNS representation of $\A$ constructed from $\mu$ on the space $L^2(\A,\mu)$.

For all $n\in\N$, let:
\begin{equation*}
\CondExp{\cdot}{\indmor{\alpha}{n}(\A_n)} : \A\rightarrow\A
\end{equation*}
be the unique conditional expectation of $\A$ onto the canonical image $\indmor{\alpha}{n}\left(\A_n\right)$ of $\A_n$ in $\A$, and such that $\mu\circ\CondExp{\cdot}{\indmor{\alpha}{n}(\A_n)} = \mu$.

Let $\beta: \N\rightarrow (0,\infty)$ have limit $0$ at infinity. If, for all $a\in\sa{\A}$, we set:
\begin{equation*}
\Lip_{\mathcal{I},\mu}^\beta(a) = \sup\left\{\frac{\left\|a - \CondExp{a}{\indmor{\alpha}{n}(\A_n)}\right\|_\A}{\beta(n)} : n \in \N \right\}
\end{equation*}
then $\left(\A,\Lip_{\mathcal{I},\mu}^\beta\right)$ is a {\Qqcms{2}}. Moreover for all $n\in\N$:
\begin{equation*}
\qpropinquity{}\left(\left(\A_n,\Lip_{\mathcal{I},\mu}^\beta\circ\indmor{\alpha}{n} \right), \left(\A,\Lip_{\mathcal{I},\mu}^\beta \right)\right) \leq \beta(n)
\end{equation*}
and thus:
\begin{equation*}
\lim_{n\rightarrow\infty} \qpropinquity{}\left(\left(\A_n,\Lip_{\mathcal{I},\mu}^\beta\circ\indmor{\alpha}{n}\right), \left(\A,\Lip_{\mathcal{I},\mu}^\beta\right)\right) = 0\text{.}
\end{equation*}
\end{theorem}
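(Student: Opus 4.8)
The plan is to verify the four defining properties of a $(2,0)$-quasi-Leibniz quantum compact metric space from Definition \ref{quasi-Monge-Kantorovich-def} for $\left(\A, \Lip_{\mathcal{I},\mu}^\beta\right)$, and then to exhibit a single explicit bridge realizing the stated bound on the propinquity. Throughout I would abbreviate $E_n = \CondExp{\cdot}{\indmor{\alpha}{n}(\A_n)}$ and $\mathrm{S}_n(a) = \left\|a - E_n(a)\right\|_\A$, so that $\Lip_{\mathcal{I},\mu}^\beta = \sup_n \mathrm{S}_n/\beta(n)$. The existence and uniqueness of each trace-preserving $E_n$ is the standard von Neumann construction: since $\mu$ is a faithful tracial state, the orthogonal projection of $L^2(\A,\mu)$ onto the $L^2$-closure of $\indmor{\alpha}{n}(\A_n)$ restricts to a conditional expectation onto the finite dimensional image $\indmor{\alpha}{n}(\A_n)$, and the requirement $\mu\circ E_n = \mu$ pins it down uniquely. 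The structural fact I would record at the outset is the compatibility (tower) relation $E_m\circ E_n = E_{\min(m,n)}$, which follows from the nesting $\indmor{\alpha}{m}(\A_m)\subseteq\indmor{\alpha}{n}(\A_n)$ for $m\leq n$.

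For the seminorm axioms, Lemma \ref{expt-seminorm-lemma} shows each $\mathrm{S}_n$ is $(2,0)$-quasi-Leibniz on the full product; since $\mathrm{S}_n$ is a seminorm, the Jordan and Lie products inherit the same bound, and taking a supremum of the rescaled $\mathrm{S}_n/\beta(n)$ preserves the inequality term by term, so $\Lip_{\mathcal{I},\mu}^\beta$ is $(2,0)$-quasi-Leibniz, with finiteness domain containing the dense Jordan-Lie subalgebra $\bigcup_n \sa{\indmor{\alpha}{n}(\A_n)}$. Lower semicontinuity is immediate, as $\Lip_{\mathcal{I},\mu}^\beta$ is a supremum of the norm-continuous functions $\mathrm{S}_n/\beta(n)$. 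For the null space, $\Lip_{\mathcal{I},\mu}^\beta(a)=0$ forces $a = E_n(a)\in\indmor{\alpha}{n}(\A_n)$ for every $n$; as the images increase, $a$ lies in $\indmor{\alpha}{0}(\A_0)=\C\unit_\A$, which is exactly where the hypothesis $\A_0=\C$ is used.

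The heart of establishing that $\left(\A,\Lip_{\mathcal{I},\mu}^\beta\right)$ is a quantum compact metric space is to verify, through Rieffel's criterion (Theorem \ref{Rieffel-thm}), that $\mathfrak{L} = \left\{a\in\sa{\A} : \Lip_{\mathcal{I},\mu}^\beta(a)\leq 1,\ \mu(a)=0\right\}$ is totally bounded. I would first note that $E_0(a)=\mu(a)\unit_\A=0$ on $\mathfrak{L}$, so $\mathrm{S}_0(a)=\|a\|_\A\leq\beta(0)$ bounds $\mathfrak{L}$ in norm. Then, given $\epsilon>0$, choose $N$ with $\beta(N)<\epsilon/2$; for $a\in\mathfrak{L}$ we have $\|a-E_N(a)\|_\A = \mathrm{S}_N(a)\leq\beta(N)<\epsilon/2$, and the elements $E_N(a)$ all lie in a norm-bounded subset of the finite dimensional space $\indmor{\alpha}{N}(\A_N)$. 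A finite $\epsilon/2$-net for that bounded finite dimensional set lifts to a finite $\epsilon$-net for $\mathfrak{L}$, giving total boundedness and hence, by Theorem \ref{Rieffel-thm}, that $\Kantorovich{\Lip_{\mathcal{I},\mu}^\beta}$ metrizes the weak* topology.

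Finally, for the convergence estimate (noting that each restriction $\left(\A_n, \Lip_{\mathcal{I},\mu}^\beta\circ\indmor{\alpha}{n}\right)$ is itself a finite dimensional $(2,0)$-quasi-Leibniz quantum compact metric space) I would use the bridge $\gamma = \left(\A,\indmor{\alpha}{n},\mathrm{id}_\A,\unit_\A\right)$ from $\A_n$ to $\A$ with pivot the unit. Since $\StateSpace_1(\A|\unit_\A)=\StateSpace(\A)$ and every state of the unital subalgebra $\indmor{\alpha}{n}(\A_n)$ extends to $\A$, both Hausdorff terms in the height vanish. With pivot $\unit_\A$ the bridge seminorm is $\bridgenorm{\gamma}{a,b}=\|\indmor{\alpha}{n}(a)-b\|_\A$; for $a$ in the unit ball of $\Lip_{\mathcal{I},\mu}^\beta\circ\indmor{\alpha}{n}$ the point $\indmor{\alpha}{n}(a)$ lies in the unit ball of $\Lip_{\mathcal{I},\mu}^\beta$ at distance $0$, while for $b$ in the unit ball of $\Lip_{\mathcal{I},\mu}^\beta$ the $\indmor{\alpha}{n}$-preimage of $E_n(b)$ is a candidate target at distance $\mathrm{S}_n(b)\leq\beta(n)$. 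The one genuinely non-formal point, and the step I expect to be the main obstacle, is checking that this preimage is \emph{admissible}, i.e. that $\Lip_{\mathcal{I},\mu}^\beta(E_n(b))\leq\Lip_{\mathcal{I},\mu}^\beta(b)$; this I would deduce from the tower relation, showing $\mathrm{S}_m(E_n(b)) = \|E_n(b-E_m(b))\|_\A\leq\mathrm{S}_m(b)$ for $m\leq n$ and $\mathrm{S}_m(E_n(b))=0$ for $m>n$. Granting this, the reach is at most $\beta(n)$, so the length of $\gamma$ is at most $\beta(n)$, and Theorem-Definition \ref{def-thm}(4) yields $\qpropinquity{}\left(\left(\A_n,\Lip_{\mathcal{I},\mu}^\beta\circ\indmor{\alpha}{n}\right),\left(\A,\Lip_{\mathcal{I},\mu}^\beta\right)\right)\leq\beta(n)$, with the limit following from $\beta(n)\to 0$.
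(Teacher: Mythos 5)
Your proposal is correct and follows essentially the same route as the paper's own proof: the trace-preserving conditional expectations realized via the orthogonal projections on $L^2(\A,\mu)$, the quasi-Leibniz property via Lemma (\ref{expt-seminorm-lemma}), total boundedness of the $\mu$-normalized Lip-ball via Theorem (\ref{Rieffel-thm}) using $\mathds{E}_0 = \mu(\cdot)\unit_\A$ and the finite dimensionality of $\A_N$, and the unit-pivot bridge whose reach estimate rests on exactly the contraction $\Lip_{\mathcal{I},\mu}^\beta\circ\mathds{E}_n \leq \Lip_{\mathcal{I},\mu}^\beta$ that you correctly identify as the key point and derive from the tower relation, just as the paper does in Equations (\ref{AF-lip-norms-thm-eq1}), (\ref{AF-lip-norms-thm-eq2}) and (\ref{Lip-contr}). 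The only difference is one of detail, not of route: the paper spends its Step 1 proving existence, uniqueness, positivity and contractivity of the trace-preserving conditional expectation from scratch, where you cite this as the standard von Neumann construction, which is a legitimate appeal to known theory.
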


\begin{proof}
To begin with, we note that, from the standard GNS construction, we have the following:
\begin{enumerate}
\item since $\mu$ is faithful, the map $\xi : a \in \A \mapsto a\in L^2(\A,\mu)$ is injective,
\item since $\Vert\xi(a)\Vert_{L^2(\A,\mu)} =\sqrt{\mu(a^\ast a)} \leq \|a\|_\A$ for all $a\in\A$, the map $\xi$ is a continuous (weak) contraction,
\item by construction, $\xi(ab) = \pi(a)\xi(b)$ for all $a,b \in \A$,
\item if $\omega$ is $\xi(\unit_\A)$, then $\omega$ is cyclic and $\xi(a) = \pi(a)\omega$.
\end{enumerate}

Let $n\in\N$. We denote the canonical unital *-monomorphism from $\A_n$ into $\A$ by $\indmor{\alpha}{n}$. Thus $\xi\circ\indmor{\alpha}{n} : \A_n \rightarrow L^2(\A,\mu)$ is a linear, weakly contractive injection. Since $\A_n$ is finite dimension, $\xi\circ\indmor{\alpha}{n}(\A_n)$ is a closed subspace of $L^2(\A,\mu)$.  Let $P_n$ be the orthogonal projection from $L^2(\A,\mu)$ onto $\xi\circ\indmor{\alpha}{n}(\A_n)$.

We thus note that for all $a\in\A$, we have $P_n (\xi(a)) \in \xi\circ\indmor{\alpha}{n}(\A_n)$, thus, since $\xi$ is injective, there exists a unique $\mathds{E}_n(a) \in \indmor{\alpha}{n}(\A_n)$ with $\xi(\mathds{E}_n(a)) = P_n(\xi(a))$. 

\begin{step}
We begin by checking that the map $\mathds{E}_n : \A\rightarrow \indmor{\alpha}{n}(\A_n)$ is the conditional expectation $\CondExp{\cdot}{\indmor{\alpha}{n}(\A_n)}$ of $\A$ onto $\indmor{\alpha}{n}(\A_n)$ which preserves the state $\mu$.
\end{step}

To begin with, if $a\in\A_n$ then $P_n\xi(\indmor{\alpha}{n}(a)) = \xi(\indmor{\alpha}{n}(a))$ so $\mathds{E}_n(a) = \indmor{\alpha}{n}(a)$. Thus $\mathds{E}_n$ is onto $\indmor{\alpha}{n}(\A_n)$, and restricts to the identity on $\indmor{\alpha}{n}(\A_n)$.

We now prove that $P_n$ commutes with $\pi(a)$ for all $a\in\indmor{\alpha}{n}(\A_n)$. Let $a\in\indmor{\alpha}{n}(\A_n)$. We note that if $b \in \indmor{\alpha}{n}(\A_n)$ then $\pi(a)\xi(b) = \xi(ab) \in \xi(\indmor{\alpha}{n}(\A_n))$ since $\indmor{\alpha}{n}(\A_n)$ is a subalgebra of $\A$. Thus $\pi(a)\left(\xi(\indmor{\alpha}{n}(\A_n))\right) \subseteq \xi(\indmor{\alpha}{n}(\A_n))$. Since $\indmor{\alpha}{n}(\A_n)$ is closed under the adjoint operation, and $\pi$ is a *-representation, we have $\pi(a^\ast)\xi(\indmor{\alpha}{n}(\A_n))\subseteq \xi(\indmor{\alpha}{n}(\A_n))$. Thus, if we let $x \in \xi(\indmor{\alpha}{n}(\A_n))^\perp$ and $y \in \xi(\indmor{\alpha}{n}(\A_n))$, we then have:
\begin{equation*}
\inner{\pi(a)x}{y} = \inner{x}{\pi(a^\ast)y} = 0\text{,}
\end{equation*}
i.e. $\pi(a)(\xi(\indmor{\alpha}{n}(\A_n))^\perp) \subseteq \xi(\indmor{\alpha}{n}(\A_n))^\perp$. Consequently, if $x\in L^2(\A,\mu)$, writing $x = P_nx + P^\perp_n x$, we have:
\begin{equation*}
P_n\pi(a) x = P_n\pi(a)P_nx + P_n\pi(a)P_n^\perp x = \pi(a)P_n x\text{.}
\end{equation*}
In other words, $P_n$ commutes with $\pi(a)$ for all $a\in\indmor{\alpha}{n}(\A_n)$.

As a consequence, for all $a\in\indmor{\alpha}{n}(\A_n)$ and $b\in\A$:
\begin{equation*}
\xi(\mathds{E}_n(ab)) = P_n\pi(a)\xi(b) = \pi(a)P_n\xi(b) = \pi(a)\xi(\mathds{E}_n(b)) = \xi(a\mathds{E}_n(b))\text{.}
\end{equation*}
Thus $\mathds{E}_n(ab) = a\mathds{E}_n(b)$ for all $a\in\indmor{\alpha}{n}(\A_n)$ and $b\in\A$. 

We now wish to prove that $\mathds{E}_n$ is a *-linear map. Let $J : \xi(x) \mapsto \xi\left(x^\ast\right)$. The key observation is that, since $\mu$ is a trace:
\begin{equation*}
\inner{J\xi(x)}{J\xi(y)} = \mu(y x^\ast) = \mu(x^\ast y) =  \inner{x}{y} 
\end{equation*}
hence $J$ is an conjugate-linear isometry and can be extended to $L^2(\A,\mu)$. It is easy to check that $J$ is surjective, as it has a dense range and is isometric, in fact $J = J^\ast = J^{-1}$. This is the only point where we use that $\mu$ is a trace.

We now check that $P_n$ and $J$ commute. To begin with, we note that:
\begin{equation*}
\begin{split}
( JP_n J)( JP_n J) = JP_nJ
\end{split}
\end{equation*}
and thus the self-adjoint operator $JP_n J$ is a projection. Let $a\in\A$. Then:
\begin{equation*}
JP_nJ\xi(a) = JP_n\xi(a^\ast) = J\xi(\mathds{E}_n(a^\ast)) = \xi(\mathds{E}_n(a^\ast)^\ast) \in \xi(\indmor{\alpha}{n}(\A_n))\text{.}
\end{equation*}
Thus $JP_nJ = P_n$, so $P_n$ and $J$ commute since $J^2 = \unit_{\B(L^2(\A,\mu))}$. 

Consequently for all $a\in\A$:
\begin{equation*}
\xi(\mathds{E}_n(a^\ast)) = P_n  \xi(a^\ast) = P_n J \xi(a) = J P_n \xi(a) = J \xi(\mathds{E}_n(a)) = \xi(\mathds{E}_n(a)^\ast) \text{,}
\end{equation*}
so $\mathds{E}_n(a^\ast) = \mathds{E}_n(a)^\ast$.

In particular, we note that for all $a\in\A$ and $b, c\in \indmor{\alpha}{n}(\A_n)$ we have:
\begin{equation*}
\mathds{E}_n(b a c) = b \mathds{E}_n(a c) = b \mathds{E}_n(c^\ast a^\ast)^\ast = b(c^\ast \mathds{E}_n(a)^\ast)^\ast) = b\mathds{E}_n(a)c \text{.}
\end{equation*}

To prove that $\mathds{E}_n$ is a positive map, we begin by checking that it preserves the state $\mu$. First note that $\unit_\A \in \indmor{\alpha}{n}(\A_n)$ so $\omega\in\xi(\indmor{\alpha}{n}(\A_n))$, and thus $P_n\omega=\omega$. Thus for all $a\in\A$:

\begin{equation*}
\begin{split}
\mu(\mathds{E}_n(a)) &= \inner{\pi(\mathds{E}_n(a))\omega}{\omega} \\
&= \inner{\xi(\mathds{E}_n(a))}{\omega} = \inner{P_n\xi(a)}{\omega}\\
&= \inner{\xi(a)}{P_n \omega} = \inner{\pi(a)\omega}{P_n\omega}\\
&= \inner{\pi(a)\omega}{\omega} = \mu(a) \text{.}
\end{split}
\end{equation*}
Thus $\mathds{E}_n$ preserves the state $\mu$. More generally, using the conditional expectation property, for all $b,c \in \indmor{\alpha}{n}(\A_n)$ and $a\in\A$:
\begin{equation*}
\mu(b \mathds{E}_n(a) c) = \mu(b a c)\text{.}
\end{equation*}

We now prove that $\mathds{E}_n$ is positive. First, $\mu$ restricts to a faithful state of $\indmor{\alpha}{n}(\A_n)$ and $L^2(\indmor{\alpha}{n}(\A_n),\mu)$ is given canonically by $\xi(\indmor{\alpha}{n}(\A_n))$. Let now $a\in\sa{\A}$ with $a\geq 0$. We now have for all $b \in \indmor{\alpha}{n}(\A_n)$ that:
\begin{equation*}
\inner{\mathds{E}_n(a)\xi(b)}{\xi(b)} = \mu(b^\ast \mathds{E}_n(a) b) = \mu(b^\ast a b) \geq 0\text{.}
\end{equation*}
Thus the operator $\mathds{E}_n(a)$ is positive in $\indmor{\alpha}{n}(\A_n)$. Thus $\mathds{E}_n$ is positive.

Since $\mathds{E}_n$ restricts to the identity on $\indmor{\alpha}{n}(\A_n)$, this map is of norm at least one. Now, let $a\in \sa{\A}$ and $\varphi \in \StateSpace(\A)$. Then $\varphi \circ\mathds{E}_n$ is a state of $\A$ since $\mathds{E}_n$ is positive and unital. Thus $\left|\varphi\circ\mathds{E}_n(a)\right| \leq \|a\|_\A$. As $\mathds{E}_n(\sa{\A}) \subseteq \sa{\A}$, we have:
\begin{equation}\label{cp-sa-eq1}
\forall a\in \sa{\A}\quad \left\|\mathds{E}_n(a)\right\|_\A = \sup\left\{ |\varphi\circ\mathds{E}_n(a)| : \varphi \in \StateSpace(\A) \right\} \leq \|a\|_\A\text{.}
\end{equation}
Thus $\mathds{E}_n$ restricted to $\sa{\A}$ is a linear map of norm $1$.

On the other hand, for all $a\in\A$, we have:
\begin{equation*}
\begin{split}
0 &\leq \mathds{E}_n\left ( \left(a-\mathds{E}_n(a)\right)^\ast\left(a-\mathds{E}_n(a)\right)\right ) \\
&= \mathds{E}_n\left(a^\ast a\right) - \mathds{E}_n\left(\mathds{E}_n(a)^\ast a\right) - \mathds{E}_n\left(a^\ast \mathds{E}_n(a)\right) + \mathds{E}_n\left(\mathds{E}_n(a)^\ast \mathds{E}_n(a)\right)\\
&= \mathds{E}_n\left(a^\ast a\right) - \mathds{E}_n(a)^\ast \mathds{E}_n(a) \text{.}
\end{split}
\end{equation*}
Thus for all $a\in\A$ we have:
\begin{equation*}
\begin{split}
\left\| \mathds{E}_n(a) \right\|_\A^2 &= \left\| \mathds{E}_n(a)^\ast \mathds{E}_n(a) \right\|_\A \\
&\leq \left\| \mathds{E}_n(a^\ast a) \right\|_\A \\
&\leq \|a^\ast a\|_\A = \|a\|_\A^2 \text{ by Inequality (\ref{cp-sa-eq1}).}
\end{split}
\end{equation*}
Thus $\mathds{E}_n$ has norm $1$. We conclude that $\mathds{E}_n$ is a conditional expectation onto $\indmor{\alpha}{n}(\A_n)$ which preserves $\mu$.

Now, assume $T : \A\rightarrow \indmor{\alpha}{n}(\A_n)$ is a unital conditional expectation such that $\mu\circ T = \mu$. As before, we have:
\begin{equation*}
\mu(bT(a)c) = \mu (b a c)
\end{equation*}
for all $a\in\A$ and $b,c \in \indmor{\alpha}{n}(\A_n)$. Thus, for all $x,y \in L^2(\indmor{\alpha}{n}(\A_n),\mu)$ and for all $a\in\A$, we compute:
\begin{equation*}
\inner{T(a)x}{y} = \mu(y^\ast T(a) x) = \mu(y^\ast a x) = \mu(y^\ast \mathds{E}_n(a) x) = \inner{\mathds{E}_n(a)x}{y}
\end{equation*}
and thus $\mathds{E}_n(a) = T(a)$ for all $a\in\A$. So $\mathds{E}_n$ is the unique conditional expectation from $\A$ onto $\indmor{\alpha}{n}(\A_n)$ which preserves $\mu$.

\begin{step}
The seminorm $\Lip_{\mathcal{I},\mu}^\beta$ is a $(2,0)$-quasi-Leibniz Lip-norm on $\A$, and $\mathds{E}_n$ is weakly contractive for $\Lip_{\mathcal{I},\mu}^\beta$ and for all $n\in\N$.
\end{step}

We conclude from Lemma (\ref{expt-seminorm-lemma}) and from Step 1 that $\Lip_{\mathcal{I},\mu}^\beta$ is a $(2,0)$-quasi-Leibniz seminorm.

If $a\in\sa{\A}$ and $\Lip_{\mathcal{I},\mu}^\beta(a) = 0$ then $\|a-\mathds{E}_0(a)\|_\A = 0$ and thus $a\in\sa{\indmor{\alpha}{0}(\C)} = \R\unit_\A$. 

We also note that if $a\in\sa{\A}$ with $\Lip_{\mathcal{I},\mu}^\beta(a)\leq 1$ then $\|a-\mathds{E}_0(a)\|_\A \leq \beta(0)$. Note that $\mathds{E}_0(a) = \mu(a)\unit_\A$ as $\mathds{E}_0$ preserves $\mu$.

For all $n, p \in \N$ we have $\mathds{E}_p\circ\mathds{E}_n = \mathds{E}_{\min\{n,p\}}$ by construction (since $P_n P_p = P_{\min\{n,p\}}$). Thus, if $n\leq p$ and $a\in\sa{\A}$ then:
\begin{equation}\label{AF-lip-norms-thm-eq1}
\|\mathds{E}_n(a)-\mathds{E}_p(\mathds{E}_n(a)))\|_\A = 0\text{.}
\end{equation}
In particular, we conclude that the dense Jordan-Lie subalgebra $\sa{\bigcup_{n\in\N}\indmor{\alpha}{n}(\A_n)}$ of $\sa{\A}$ is included in the domain $\dom{\Lip_{\mathcal{I},\mu}^\beta}$ of $\Lip_{\mathcal{I},\mu}^\beta$ and thus $\dom{\Lip_{\mathcal{I},\mu}^\beta}$ is dense in $\sa{\A}$.

On the other hand, if $p\leq n \in \N$ and $a\in\sa{\A}$, then:
\begin{equation}\label{AF-lip-norms-thm-eq2}
\|\mathds{E}_n(a)-\mathds{E}_p(\mathds{E}_n(a)) \|_\A = \|\mathds{E}_n(a-\mathds{E}_p(a))\|_\A \leq \|a-\mathds{E}_p(a)\|_\A\text{.}
\end{equation}
Thus, by Expressions (\ref{AF-lip-norms-thm-eq1}) and (\ref{AF-lip-norms-thm-eq2}), for all  $a\in\sa{\A}$,   
\begin{equation}\label{Lip-contr}
\Lip_{\mathcal{I},\mu}^\beta(\mathds{E}_n(a)) \leq \Lip_{\mathcal{I},\mu}^\beta(a).
\end{equation}

Last, let $\varepsilon > 0$. There exists $N\in\N$ such that for all $n\geq N$ we have $\beta(n)<\frac{\varepsilon}{2}$. Let:
\begin{equation*}
\alg{B}_N = \left\{ a\in\sa{\A_N} : \Lip_{\mathcal{I},\mu}^\beta(\indmor{\alpha}{N}(a)) \leq 1, \mu(a) = 0 \right\}\text{.}
\end{equation*}

Since $\mathds{E}_0 = \mu(\cdot)\unit_\A$, we conclude:
\begin{equation*}
\alg{B}_N \subseteq \{ a\in \sa{\A_N} : \|a\|_\A\leq \beta(0) \}\text{,}
\end{equation*}
and since a closed ball in $\sa{\A_N}$ is compact as $\A_N$ is finite dimensional, we conclude that $\alg{B}_N$ is totally bounded. Let $\alg{F}_N$ be a $\frac{\varepsilon}{2}$-dense subset of $\alg{B}_N$.

Let now $a\in\sa{\A}$ with $\mu(a) = 0$ and $\Lip_{\mathcal{I},\mu}^\beta(a)\leq 1$. By definition of $\Lip_{\mathcal{I},\mu}^\beta$  we have $\|a-\mathds{E}_N(a)\|_\A \leq \beta(N) < \frac{\varepsilon}{2}$. Moreover, there exists $a'\in\alg{F}_N$ such that $\|\mathds{E}_N(a)-a'\|_\A \leq \frac{\varepsilon}{2}$. Thus:
\begin{equation*}
\|a - a'\|_\A \leq \varepsilon \text{.}
\end{equation*}
Thus:
\begin{equation*}
\left\{ a\in\sa{\A} : \Lip_{\mathcal{I},\mu}^\beta(a)\leq 1, \mu(a) = 0 \right\}
\end{equation*}
is totally bounded. Thus $\Lip_{\mathcal{I},\mu}^\beta$ is a Lip-norm on $\A$.

We conclude with the observation that as the pointwise supremum of continuous functions, $\Lip_{\mathcal{I},\mu}^\beta$ is lower semi-continuous.

\begin{step}
If $n \in \N$, then $(\A_n , \Lip_{\mathcal{I},\mu}^\beta\circ\indmor{\alpha}{n})$ is a $(2,0)$-quasi-Leibniz quantum compact metric space and $\qpropinquity{} \left(\left(\A_n , \Lip_{\mathcal{I},\mu}^\beta\circ\indmor{\alpha}{n}\right), \left(\A , \Lip_{\mathcal{I},\mu}^\beta\right)\right) \leq \beta(n) .$
\end{step}

The restriction of $\Lip_{\mathcal{I},\mu}^\beta$ to $\indmor{\alpha}{n}(\A_n)$ is a $(2,0)$-quasi-Leibniz lower semi-continuous Lip-norm on $\indmor{\alpha}{n}(\A_n)$ for all $n\in\N$. 

Fix $n\in\N$. We now prove our estimate on $\qpropinquity{}\left(\left(\A_n,\Lip_{\mathcal{I},\mu}^\beta\circ\indmor{\alpha}{n}\right),\left(\A,\Lip_{\mathcal{I},\mu}^\beta\right)\right)$. 

The spaces $(\A_n,\Lip_{\mathcal{I},\mu}^\beta\circ\indmor{\alpha}{n})$ and $(\indmor{\alpha}{n}(\A_n), \Lip_{\mathcal{I},\mu}^\beta)$ are isometrically isomorphic and thus at distance zero for $\qpropinquity{}$. Therefore:
\begin{equation*}
\qpropinquity{}\left(\left(\A,\Lip_{\mathcal{I},\mu}^\beta\right),\left(\A_n,\Lip_{\mathcal{I},\mu}^\beta\circ\indmor{\alpha}{n}\right)\right) = \qpropinquity{}\left(\left(\A,\Lip_{\mathcal{I},\mu}^\beta\right),\left(\indmor{\alpha}{n}(\A_n),\Lip_{\mathcal{I},\mu}^\beta\right)\right)\text{.}
\end{equation*}

Let $\mathrm{id} : \A\rightarrow\A$ be the identity and let $\iota_n : \indmor{\alpha}{n}(\A_n ) \rightarrow \A$ be the inclusion map.  The quadruple $\gamma=(\A,\unit_\A,\iota_{n},\mathrm{id})$ is a bridge from $\indmor{\alpha}{n}(\A_n)$ to $\A$ by Definition (\ref{bridge-def}). We note that by definition, the height of $\gamma$ is $0$ since the pivot of $\gamma$ is $\unit_\A$. Thus, the length of $\gamma$ is the reach of $\gamma$.

If $a \in \sa{\A}$ with $\Lip_{\mathcal{I},\mu}^\beta(a)\leq 1$, then:
\begin{equation*}
\Vert a - \mathds{E}_n (a) \Vert_\A \leq \beta(n)\text{.}
\end{equation*}
Since $\mathds{E}_n$ is positive, we thus have $\mathds{E}_n (a) \in \sa {\indmor{\alpha}{n}(\A_n)}$ .  By Equation (\ref{Lip-contr}):
\begin{equation*} 
\Lip_{\mathcal{I},\mu}^\beta\left(\mathds{E}_n (a)\right) \leq 1\text{.}
\end{equation*}
Since $\indmor{\alpha}{n}(\A_n)$ is contained in $\A$,  we conclude that the reach of $\gamma$ is no more than $\beta(n)$.

We thus conclude, by definition:
\begin{equation*}
\qpropinquity{} \left(\left(\indmor{\alpha}{n}(\A_n) , \Lip_{\mathcal{I},\mu}^\beta\right), \left(\A , \Lip_{\mathcal{I},\mu}^\beta\right)\right) \leq \beta(n)\text{.}
\end{equation*}

As $(\beta(n))_{n\in\N}$ converges to $0$, we conclude that:
\begin{equation*}
\lim_{n\rightarrow \infty} \qpropinquity{}\left(\left(\A_n,\Lip_{\mathcal{I},\mu}^\beta\circ\indmor{\alpha}{n}\right),\left(\A,\Lip_{\mathcal{I},\mu}^\beta\right)\right) = 0\text{,}
\end{equation*}
and thus our theorem is proven.
\end{proof}

\begin{remark}
We may employ similar techniques as used in the proof of Theorem (\ref{AF-lip-norms-thm}) to show that AF algebras, equipped with the Lip-norms defined from spectral triples in \cite{Antonescu04}, are limits of finite dimensional C*-subalgebras. We shall see in this paper, however, that the Lip-norms we introduce in Theorem (\ref{AF-lip-norms-thm}) provide a very natural framework to study the quantum metric properties of AF algebras.
\end{remark}

Theorem (\ref{AF-lip-norms-thm}) provides infinitely many Lip-norms on any given unital AF-algebra $\A$, parametrized by a choice of an inductive sequence converging to $\A$ and a sequence of positive entries which converge to $0$. A natural choice of a Lip-norm for a given AF algebra, which will occupy a central role in our current work, is described in the following notation.

\begin{notation}
Let $\mathcal{I} = (\A_n,\alpha_n)_{n\in\N}$ be a unital inductive sequence of finite dimensional algebras whose inductive limit $\A = \varinjlim (\A_n,\alpha_n)_{n\in\N}$ has a faithful tracial state $\mu$. Assume that $\A$ is infinite dimensional. Let $k\in \N$, $k>0$ and $\beta = \left(\frac{1}{\dim(\A_n)^k} \right)_{n\in\N}$. We note that $\lim_{\infty} \beta = 0$. We denote the Lip-norm $\Lip_{\mathcal{I},\mu}^\beta$ constructed in Theorem (\ref{AF-lip-norms-thm}) by $\Lip_{\mathcal{I},\mu}^k$. If $k=1$, then we simply write $\Lip_{\mathcal{I},\mu}$ for $\Lip_{\mathcal{I},\mu}^1$.  
\end{notation}

Our purpose is the study of various classes of AF algebras, equipped with Lip-norms constructed in Theorem (\ref{AF-lip-norms-thm}). The following notation will prove useful.

\begin{notation}\label{AF-class-notation}
The class of all {\Qqcms{2,0}s} constructed in Theorem (\ref{AF-lip-norms-thm}) is denoted by $\mathcal{AF}$. We shall endow $\mathcal{AF}$ with the topology induced by the quantum propinquity $\qpropinquity{}$. 

Furthermore, for any $k\in (0,\infty)$, let:
\begin{equation*}
\mathcal{AF}^k := \left\{ (\A, \Lip_\A) \in \mathcal{AF} \middle\vert \begin{array}{l} \exists \mathcal{I} \in \text{\calligra Inductive-f-d} \quad \A = \varinjlim \mathcal{I} \\ \exists\mu\text{ faithful trace on $\A$ such that } \Lip_\A = \Lip_{\mathcal{I},\mu}^k \\ \A\text{ is infinite dimensional} \end{array} \right\}
\end{equation*}
where {\calligra Inductive-f-d } is the class of all unital inductive sequences of finite dimensional C*-algebras whose limit has at least one faithful tracial state.
\end{notation}  

A first corollary of Theorem (\ref{AF-lip-norms-thm}) concerns some basic geometric properties of the class $\mathcal{AF}^k$:

\begin{notation}
We denote the diameter of any metric space $(X,\mathrm{d})$ by $\diam{X}{\mathrm{d}}$.

For any quantum compact metric space $(\A,\Lip)$, we denote $\diam{\StateSpace(\A)}{\Kantorovich{\Lip}}$ by $\qdiam{\A}{\Lip}$.
\end{notation}

\begin{corollary}\label{fin-diam-cor}
Let $\mathcal{I}, \mathcal{J} \in \text{\calligra Inductive-f-d}$ and $\beta, \beta'$ be two sequences of strictly positive real numbers, converging to $0$. Let $\mu$, $\nu$ be faithful tracial states, respectively, on $\varinjlim\mathcal{I}$ and $\varinjlim\mathcal{J}$. Then:
\begin{equation*}
\qdiam{\varinjlim \mathcal{I}}{\Lip_{\mathcal{I},\mu}^\beta} \leq 2\beta(0)
\end{equation*}
and:
\begin{equation*}
\qpropinquity{}\left(\left(\varinjlim \mathcal{I},\Lip_{\mathcal{I},\mu}^\beta\right), \left(\varinjlim \mathcal{J},\Lip_{\mathcal{J},\nu}^{\beta'}\right)\right) \leq \max\{\beta(0),\beta'(0)\}\text{.}
\end{equation*}
In particular, for all $k\in (0,\infty)$:
\begin{equation*}
\diam{\mathcal{AF}^k}{\mathsf{\Lambda}} \leq 1 .
\end{equation*}
\end{corollary}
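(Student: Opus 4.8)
The plan is to derive all three assertions from a single structural fact read off from the proof of Theorem~(\ref{AF-lip-norms-thm}): because $\A_0 = \C$, the conditional expectation $\mathds{E}_0$ equals $\mu(\cdot)\unit_\A$, so that every $a \in \sa{\A}$ with $\Lip_{\mathcal{I},\mu}^\beta(a) \leq 1$ lies within $\beta(0)$ of a scalar, namely $\|a - \mu(a)\unit_\A\|_\A \leq \beta(0)$.

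For the diameter estimate I would fix states $\varphi,\psi \in \StateSpace(\varinjlim\mathcal{I})$ and an element $a$ with $\Lip_{\mathcal{I},\mu}^\beta(a)\leq 1$. Writing $a = (a - \mu(a)\unit_\A) + \mu(a)\unit_\A$ and using $\varphi(\unit_\A) = \psi(\unit_\A) = 1$, the scalar summand cancels in $\varphi(a) - \psi(a)$, so that $|\varphi(a)-\psi(a)| \leq |\varphi(a-\mu(a)\unit_\A)| + |\psi(a-\mu(a)\unit_\A)| \leq 2\|a - \mu(a)\unit_\A\|_\A \leq 2\beta(0)$. Taking the supremum over admissible $a$ yields $\Kantorovich{\Lip_{\mathcal{I},\mu}^\beta}(\varphi,\psi) \leq 2\beta(0)$, and hence $\qdiam{\varinjlim\mathcal{I}}{\Lip_{\mathcal{I},\mu}^\beta} \leq 2\beta(0)$.

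The substance of the corollary is the second inequality, since Property~(1) of Theorem-Definition~(\ref{def-thm}) combined with the diameter bound would only give $2\max\{\beta(0),\beta'(0)\}$; the missing factor of two must come from an explicit bridge. Setting $\A = \varinjlim\mathcal{I}$ and $\B = \varinjlim\mathcal{J}$, I would take $\D = \A\otimes_{\min}\B$ with the unital $*$-monomorphisms $\pi_\A(a) = a\otimes\unit_\B$ and $\pi_\B(b) = \unit_\A\otimes b$, and pivot $\omega = \unit_\D$, forming the bridge $\gamma = (\D,\pi_\A,\pi_\B,\omega)$. Its height vanishes: $\StateSpace_1(\D|\unit_\D) = \StateSpace(\D)$, and because $\pi_\A,\pi_\B$ map onto tensor factors one checks $\pi_\A^\ast(\StateSpace(\D)) = \StateSpace(\A)$ and $\pi_\B^\ast(\StateSpace(\D)) = \StateSpace(\B)$ (extend a state by a product state), so both Hausdorff terms are $0$. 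For the reach I would evaluate $\bridgenorm{\gamma}{a,b} = \|a\otimes\unit_\B - \unit_\A\otimes b\|_{\A\otimes_{\min}\B}$: given $a$ in the $\A$-Lip-ball, the scalar $\mu(a)\unit_\B$ lies in the $\B$-Lip-ball (the Lip-norm annihilates scalars) and $\bridgenorm{\gamma}{a,\mu(a)\unit_\B} = \|(a - \mu(a)\unit_\A)\otimes\unit_\B\|_{\A\otimes_{\min}\B} = \|a - \mu(a)\unit_\A\|_\A \leq \beta(0)$, while symmetrically each $b$ in the $\B$-ball is within $\beta'(0)$ of $\nu(b)\unit_\A$. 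Thus the reach, and therefore the length of $\gamma$, is at most $\max\{\beta(0),\beta'(0)\}$, and Property~(4) of Theorem-Definition~(\ref{def-thm}) gives the stated bound.

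Finally, for any object of $\mathcal{AF}^k$ the defining sequence is $\beta = (\dim(\A_n)^{-k})_{n\in\N}$ with $\A_0 = \C$, so $\beta(0) = 1$; applying the second inequality to any two members of $\mathcal{AF}^k$ bounds their propinquity by $\max\{1,1\} = 1$, whence $\diam{\mathcal{AF}^k}{\mathsf{\Lambda}} \leq 1$. I expect the one genuine obstacle to be the reach computation: the crucial observations are that the minimal tensor product collapses $\|c\otimes\unit_\B\|$ to $\|c\|_\A$, and that replacing a near-scalar Lip-ball element by the exact scalar dictated by the trace simultaneously lands in the opposite Lip-ball and controls the bridge seminorm. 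The remaining steps are routine applications of the already-established properties of $\qpropinquity{}$.
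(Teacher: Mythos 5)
Your proof is correct and follows essentially the same route as the paper: the identical diameter computation, and then a bridge with unit pivot whose height vanishes automatically and whose reach is controlled by matching each Lip-ball element $a$ with the scalar $\mu(a)\unit$ in the other algebra, beating the factor-of-two bound that Property (1) of Theorem-Definition (\ref{def-thm}) would give. The only difference is cosmetic: the paper houses its bridge in the unital free product $\A \ast \B$ amalgamated over the scalars rather than in $\A\otimes_{\min}\B$, and both containers work for the identical reason --- the embeddings are unital and isometric, so the bridge seminorm collapses to $\left\|a-\mu(a)\unit_\A\right\|_\A$ in either case.
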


\begin{proof}
Let $\A = \varinjlim \mathcal{I}$ and $\B = \varinjlim \mathcal{J}$.

Let $a\in\sa{\A}$ with $\Lip_{\mathcal{I},\mu}^\beta(a)\leq 1$. Then $\|a-\mu(a)\|_\A\leq \beta(0)$. Thus for any $\varphi, \psi \in \StateSpace(\A)$, we have:
\begin{equation*}
|\varphi(a)-\psi(a)| = |\varphi(a-\mu(a)\unit_\A) - \psi(a-\mu(a)\unit_\A)| \leq 2\beta(0)\text{.}
\end{equation*}

Now, let $\D = \A * \B$ be the free product amalgamated over $\C\unit_\A$ and $\C\unit_\B$. Let $\pi : \A \hookrightarrow \D$ and $\rho : \B \hookrightarrow \D$ be the canonical unital *-monomorphism. The quadruple $\gamma = (\D, \unit_\D, \pi, \rho)$ is a bridge from $\A$ to $\B$. 

Let $a\in \sa{\A}$ with $\Lip_{\mathcal{I},\mu}^\beta (a) \leq 1$. Then:
\begin{equation*}
\|\pi(a) \unit_\D - \unit_\D\rho(\mu(a)\unit_\B)\|_\D = \|a-\mu(a)\unit_\A\|_\A \leq \beta(0)\text{.}
\end{equation*}
The result is symmetric in $\A$ and $\B$. Thus the reach of $\gamma$ is no more than $\max\{\beta(0),\beta'(0)\}$. As the height of $\gamma$ is zero, we have proven that:
\begin{equation*}
\qpropinquity{}\left(\left(\varinjlim \mathcal{I},\Lip_{\mathcal{I},\mu}^\beta\right), \left(\varinjlim \mathcal{J},\Lip_{\mathcal{J},\nu}^{\beta'}\right)\right) \leq \max\{\beta(0),\beta'(0)\}\text{.}
\end{equation*}
Note that this last estimate is slightly better than what we would obtain with \cite[Proposition 4.6]{Latremoliere13}.

We conclude our proof noting that if $(\A,\Lip_{\mathcal{I}})\in\mathcal{AF}^k$ then $\beta(0) = 1$.
\end{proof}

\section{The geometry of the class of UHF Algebras for $\qpropinquity{}$}\label{uhf}

Our purpose for this section is to study the topology of the class of uniformly hyperfinite algebras equipped with the Lip-norms from Theorem (\ref{AF-lip-norms-thm}). We begin this section with an explicit computation, in this context, for the conditional expectations involved in our construction in Theorem (\ref{AF-lip-norms-thm}). We then establish our main result for this section, by constructing a continuous surjection from the Baire space to the subclass of $\mathcal{AF}^k$ consisting of UHF algebras.

\begin{notation}
For all $k\in (0,\infty)$, we let $\mathcal{UHF}^k$ be the subclass of $\mathcal{AF}^k$ of {\Qqcms{(2,0)}s} of the form $(\A,\Lip)$ with $\A$ a UHF algebra.
\end{notation}

\subsection{An expression for conditional expectations}\label{exp-formula-sec}

\begin{notation}
For all $d \in \N$, we denote the full matrix algebra of $d\times d$ matrices over $\C$ by $\alg{M}(d)$.
\end{notation}

Let $\B=\oplus_{j=1}^N \alg{M}(n(j))$ for some $N \in\N$ and $n(1),\ldots,n(N) \in \N\setminus\{0\}$. For each $k\in \{1,\ldots,N\}$ and for each $j,m \in \{1,\ldots,n(k)\}$, we denote the matrix $((\delta_{u,v}^{j,m}))_{u,v  =1,\ldots, n(k)}$ by $e_{k,j,m}$, where we used the Kronecker symbol:
\begin{equation*}
\delta_a^b = \begin{cases}
1 \text{ if $a=b$,}\\
0 \text{ otherwise.}
\end{cases}
\end{equation*} 

We note that for all $j,m,j',m' \in \{1,\ldots,n(k)\}$ we have:
\begin{equation*}
\mathrm{tr}\left( e_{k,j,m}^\ast e_{k,j',m'} \right) = \begin{cases}
\frac{1}{n(k)} \text{ if $j=j'$ and $m=m'$,}\\
0 \text{ otherwise}
\end{cases}
\end{equation*}
when $\mathrm{tr}$ is the unique tracial state of $\alg{M}(n(k))$.

Now, let $\mu$ be a faithful tracial state on $\B$. Then $\mu$ is a convex combination with positive coefficients of the unique tracial states on $\alg{M}(n(0)),\ldots,\alg{M}(n(N))$. We thus deduce that:
\begin{equation*}
\left\{ e_{k,j,m} : k\in\{1,\ldots,N\}, j,m\in \{1,\ldots,n(k)\} \right\}
\end{equation*}
is an orthogonal basis of $L^2(\B,\mu)$.

Let us further assume that we are given a unital *-monomorphism $\alpha : \B \hookrightarrow \A$ into a unital C*-algebra $\A$ with a faithful tracial state. The restriction of $\mu$ to $\alpha(\B)$ is thus a faithful tracial state on $\alpha(\B)$. We will use the notations of the proof of Theorem (\ref{AF-lip-norms-thm}): let $\pi$ be the GNS representation of $\A$ defined by $\mu$ on the Hilbert space $L^2(\A,\mu)$ and let $\xi : a\in\A\rightarrow a\in L^2(\A,\mu)$.

We then can regard $L^2(\alpha(\B),\mu)$ as a subspace of $L^2(\A,\mu)$ (as noted in the proof of Theorem (\ref{AF-lip-norms-thm}), $L^2(\alpha(\B),\mu)$ is $\alpha(\B)$, endowed with the Hermitian norm from the inner product defined by $\mu$). Let $P$ be the projection of $L^2(\A,\mu)$ on $L^2(\alpha(\B),\mu)$. Then for all $a\in \A$, we have:
\begin{equation} \label{cond-exp-eq}
P\xi(a) = \sum_{k=1}^N \sum_{j=1}^{n(k)}\sum_{m=1}^{n(k)} \frac{\mu(\alpha(e_{k,j,m}^\ast) a)}{\mu(\alpha(e_{k,j,m}^\ast e_{k,j,m}))} \alpha(e_{k,j,m})\text{.}
\end{equation}

We also note that, if $\CondExp{\cdot}{\alpha(\B)}$ is the conditional expectation of $\A$ onto $\alpha(\B)$ which preserves $\mu$ constructed from the Jones' projection $P$ as in Theorem (\ref{AF-lip-norms-thm}),  then $\xi(\CondExp{a}{\alpha(\B)}) = P\xi(a)$ for all $a\in\A$.

\subsection{A H{\"o}lder surjection from the Baire Space onto $\mathcal{UHF}^k$}

A uniform, hyperfinite (UHF) algebra is a particular type of AF algebra obtained as the limit of unital, simple finite dimensional C*-algebras. UHF algebras were classified by Glimm \cite{Glimm60} and, as AF algebras, they are also classified by their Elliott invariant \cite{Davidson}. UHF algebras are always unital simple AF algebras, and thus they admit a faithful tracial state. Moreover, the tracial state of a UHF algebra $\A$ is unique, as is seen by noting that it must restrict to the unique tracial state on the full matrix subalgebras of $\A$ whose union is dense in $\A$.

Up to unitary conjugation, a unital *-monomorphism $\alpha : \B\rightarrow\A$ between two unital simple finite dimensional C*-algebras, i.e. two nonzero full matrix algebras $\A$ and $\B$, exists if and only if $\dim\A = k^2\dim\B$ for $k\in\N$, and $\alpha$ must be of the form:
\begin{equation}\label{mul-morphism-eq}
A \in \B \longmapsto \begin{pmatrix}
A & & \\
& \ddots & \\
& & A
\end{pmatrix}\in \A\text{.}
\end{equation}

It is thus sufficient, in order to characterize a unital inductive sequence of full matrix algebras, to give a sequence of positive integers:

\begin{definition}
Let $\mathcal{I} = (\A_n,\alpha_n)_{n\in\N}$ be a unital inductive sequence of unital, simple finite dimensional C*-algebras, with $\A_0 = \C$.

The \emph{multiplicity sequence} of $\mathcal{I}$ is the sequence $\left(\sqrt{\frac{\dim\A_{n+1}}{\dim\A_n}} \right)_{n\in\N}$ of positive integers.
\end{definition}

A multiplicity sequence is any sequence in $\N\setminus\{0\}$. A UHF algebra is always obtained as the limit of an inductive sequence in the following class:
\begin{notation}
Let \text{\calligra StrictFullInductive} be the set of all unital inductive sequences of full matrix algebras whose multiplicity sequence lies in $(\N\setminus\{0,1\})^\N$ and which starts with $\C$.
\end{notation}

UHF algebras have a unique tracial state, which is faithful since UHF algebras are simple. We make a simple observation relating multiplicity sequences and tracial states of the associated UHF algebras, which will be important for the main result of this section.

\begin{lemma}\label{uhf-trace-lemma}
Let $\mathcal{I} = (\A_n,\alpha_n)_{n\in\N}$ in \text{\calligra StrictFullInductive}. Let $\A = \varinjlim\mathcal{I}$ and let $\mu_\A$ be the unique tracial state of $\A$. Let $\vartheta$ be the multiplicity sequence of $\mathcal{I}$.
\begin{enumerate}
\item If $a\in \A_n$, then:
\begin{equation*}
\mu_\A(\indmor{\alpha}{n}(a)) = \frac{1}{\prod_{j=0}^{n-1} \vartheta(j)} \mathrm{Tr}(a)
\end{equation*}
where $\mathrm{Tr}$ is the unique trace on $\A_n$ which maps the identity to $\dim\A_n$.
\item Let $\mathcal{J} = (\B_n,\alpha'_n)_{n\in\N}$ in \text{\calligra StrictFullInductive} and set $\B = \varinjlim\mathcal{J}$. Let $\mu_\B$ the unique tracial state of $\B$. If the multiplicity sequences of $\mathcal{I}$ and $\mathcal{J}$ agree up to some $N\in\N$, then for all $n\in \{0,\ldots,N\}$, we have $\A_n = \B_n$ and moreover, for all $a\in \A_n = \B_n$, we have:
\begin{equation*}
\mu_\A \circ \indmor{\alpha}{n} (a) = \mu_\B \circ \indmor{\alpha'}{n}(a)\text{.}
\end{equation*}
\end{enumerate}

\end{lemma}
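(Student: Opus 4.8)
The plan is to reduce both parts to the uniqueness of the tracial state on a full matrix algebra. For part (1), I would first observe that because $\indmor{\alpha}{n} : \A_n \to \A$ is a unital $*$-monomorphism and $\mu_\A$ is a tracial state on $\A$, the composite $\mu_\A \circ \indmor{\alpha}{n}$ is a tracial state on $\A_n$. Since every member of \text{\calligra StrictFullInductive} is a sequence of full matrix algebras, $\A_n$ is a full matrix algebra and therefore carries a single tracial state, namely its normalized trace; consequently $\mu_\A \circ \indmor{\alpha}{n}$ must equal that normalized trace, and the claimed formula is just this normalized trace rewritten in terms of $\mathrm{Tr}$. To pin down the normalizing constant I would compute the matrix size of $\A_n$: writing $\vartheta(j) = \sqrt{\dim\A_{j+1}/\dim\A_j}$ and using $\A_0 = \C$, the product telescopes to $\prod_{j=0}^{n-1}\vartheta(j) = \sqrt{\dim\A_n}$, which is exactly the matrix size of $\A_n$ and hence the constant appearing in the statement.

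For part (2), the key point is that, for indices $n \le N$, both the algebra $\A_n$ and the trace $\mu_\A \circ \indmor{\alpha}{n}$ depend only on the initial segment $\vartheta(0),\dots,\vartheta(N-1)$ of the multiplicity sequence. Indeed, agreement of the multiplicity sequences of $\mathcal{I}$ and $\mathcal{J}$ up to $N$ forces the matrix sizes $\prod_{j=0}^{n-1}\vartheta(j)$ to coincide for the two sequences whenever $n \le N$, so $\A_n = \B_n$ as full matrix algebras for all such $n$. Then, for $a \in \A_n = \B_n$ with $n \le N$, part (1) expresses both $\mu_\A(\indmor{\alpha}{n}(a))$ and $\mu_\B(\indmor{\alpha'}{n}(a))$ through $\mathrm{Tr}(a)$ and the respective products $\prod_{j=0}^{n-1}\vartheta(j)$ and $\prod_{j=0}^{n-1}\vartheta'(j)$; since $n-1 < N$, these products involve only indices at which the two multiplicity sequences agree, so they are equal and the two traces coincide. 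Thus part (2) is essentially an immediate corollary of part (1).

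The only genuinely structural input is the uniqueness of the tracial state on a matrix algebra, which is standard, so I expect the main obstacle to be not any estimate but the careful matching of normalizations: verifying the telescoping identity $\prod_{j=0}^{n-1}\vartheta(j) = \sqrt{\dim\A_n}$ and its role as the matrix size of $\A_n$. One minor subtlety worth recording is that the connecting monomorphisms in \text{\calligra StrictFullInductive} are pinned down only up to unitary conjugation by the block-diagonal form of Equation (\ref{mul-morphism-eq}); this ambiguity is invisible to any trace, so it affects neither the identification $\A_n = \B_n$ nor the equality of the two tracial states.
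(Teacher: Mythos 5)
Your proof is correct and takes exactly the paper's route: the paper disposes of Assertion (1) by invoking the uniqueness of the tracial state on the full matrix algebra $\A_n$ (so that $\mu_\A\circ\indmor{\alpha}{n}$ must be the normalized trace), and then obtains Assertion (2) directly from Assertion (1), just as you do. Your telescoping identity $\prod_{j=0}^{n-1}\vartheta(j)=\sqrt{\dim\A_n}$ also pins down the only consistent reading of the normalization in the statement, namely that $\mathrm{Tr}$ is the usual matrix trace sending $\unit_{\A_n}$ to the matrix size $\sqrt{\dim\A_n}$ (not to the vector-space dimension $\dim\A_n$, which would fail to make the right-hand side a state).
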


\begin{proof}
Assertion (1) follows from the uniqueness of the tracial state on $\A_n$ for all $n\in\N$.

Assertion (2) follows directly from Assertion (1).
\end{proof}

The set of sequences $\BaireSpace$ of positive integers is thus a natural parameter space for the classes $\mathcal{UHF}^k$. Moreover, $\BaireSpace$ can be endowed with a natural topology, and we thus can investigate the continuity of maps from the Baire space to $\left(\mathcal{UHF}^k,\qpropinquity{}\right)$.

\begin{definition}\label{Baire-Space-def}
The \emph{Baire space} $\BaireSpace$ is the set $(\N\setminus\{0\})^\N$ endowed with the metric $\mathsf{d}$ defined, for any two $(x(n))_{n\in\N}$, $(y(n))_{n\in\N}$ in $\BaireSpace$, by:
\begin{equation*}
\mathsf{d}\left((x(n))_{n\in\N}, (y(n))_{n\in\N}\right) = \begin{cases}
0 \text{ if $x(n) = y(n)$ for all $n\in\N$},\\
2^{-\min\left\{ n \in \N : x(n) \not= y(n) \right\}} \text{ otherwise}\text{.}
\end{cases}
\end{equation*}
\end{definition}

\begin{remark}
We note that it is common, in the literature on descriptive set theory, to employ the metric defined on $\BaireSpace$ by setting on $(x(n))_{n\in\N}, (y(n))_{n\in\N} \in \BaireSpace$:
\begin{equation*}
\mathsf{d'}\left((x(n))_{n\in\N}, (y(n))_{n\in\N}\right) = \begin{cases}
0 \text{ if $x(n) = y(n)$ for all $n\in\N$},\\
\frac{1}{1+\min\left\{ n \in \N : x(n) \not= y(n) \right\}} \text{ otherwise}\text{.}
\end{cases}
\end{equation*}
It is however easy to check that $\mathsf{d}$ and $\mathsf{d}'$ are topologically, and in fact uniformly equivalent as metrics. Our choice will make certain statements in our paper more natural.
\end{remark}

We now prove the result of this section: there exists a natural continuous surjection from the Baire space $\BaireSpace$ onto $\mathcal{UHF}^k$ for all $k \in (0,\infty)$. We recall:

\begin{definition}
A function $f : X\rightarrow Y$ between two metric spaces $(X,\mathsf{d}_X)$ and $(Y,\mathsf{d}_Y)$ is $(c,r)$-H{\"older}, for some $c \geq 0$ and $r > 0$, when:
\begin{equation*}
\mathsf{d}_Y(f(x),f(y)) \leq c \mathsf{d}_X(x,y)^r 
\end{equation*}
for all $x,y \in X$.
\end{definition}

\setcounter{step}{0}
\begin{theorem}\label{uhf-thm} 
For any $\beta = (\beta(n))_{n\in\N} \in \BaireSpace$, we define the sequence $\boxtimes\beta$ by:
\begin{equation*}
\boxtimes\beta = n \in \N \longmapsto \begin{cases}
1 \text{ if $n = 0$,}\\
\prod_{j=0}^{n-1} (\beta(j)+1) \text{ otherwise}.
\end{cases}
\end{equation*}

We then define, for all $\beta\in\BaireSpace$, the unital inductive sequence:
\begin{equation*}
\mathcal{I}(\beta) = \left(\alg{M}\left(\boxtimes\beta(n)\right), \alpha_n\right)_{n\in\N}
\end{equation*}
where $\alg{M}(d)$ is the algebra of $d\times d$ matrices and for all $n\in\N$, the unital *-monomorphism $\alpha_n$ is of the form given in Expression (\ref{mul-morphism-eq}).

The map $\alg{u}$ from $\BaireSpace$ to the class of UHF algebras is now defined by:
\begin{equation*}
(\beta(n))_{n\in\N}\in\BaireSpace  \longmapsto \alg{u}((\beta(n))_{n\in\N}) = \varinjlim\mathcal{I}(\beta)\text{.}
\end{equation*}

Let $k\in (0,\infty)$ and $\beta\in\BaireSpace$. Let $\Lip_\beta^k$ be the Lip-norm $\Lip_{\mathcal{I}(\beta),\mu}^\vartheta$ on $\alg{u}(\beta)$ given by Theorem (\ref{AF-lip-norms-thm}), the sequence $\vartheta : n\in\N\mapsto \boxtimes\beta(n)^k$ and the unique faithful trace $\mu$ on $\alg{u}(\beta)$.

The {\Qqcms{(2,0)}} $\left(\alg{u}(\beta), \Lip_{\beta}^k\right)$ will be denoted simply by $\uhf{\beta,k}$.

For all $k\in(0,\infty)$, the map:
\begin{equation*}
\uhf{\cdot,k} : \BaireSpace \longrightarrow \mathcal{UHF}^k
\end{equation*}
is a $(2,k)$-H{\"o}lder surjection.
\end{theorem}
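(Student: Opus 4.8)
The plan is to check first that $\uhf{\cdot,k}$ is well-defined with range inside $\mathcal{UHF}^k$, then to obtain surjectivity by reading off a multiplicity sequence, and finally to prove the H\"older estimate by routing the triangle inequality of Theorem-Definition (\ref{def-thm}) through a common finite-dimensional subalgebra. For well-definedness, note that for any $\beta\in\BaireSpace$ the sequence $\mathcal{I}(\beta)$ has multiplicities $\boxtimes\beta(n+1)/\boxtimes\beta(n) = \beta(j)+1\geq 2$, so it is a strict full-matrix inductive sequence with infinite-dimensional simple limit $\alg{u}(\beta)$, carrying a unique faithful trace; hence $\uhf{\beta,k}$ is exactly of the form produced by Theorem (\ref{AF-lip-norms-thm}) and lies in $\mathcal{UHF}^k$. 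For surjectivity, I would take an arbitrary $(\A,\Lip)\in\mathcal{UHF}^k$, write the UHF algebra $\A$ as the limit of a strict full-matrix inductive sequence with multiplicity sequence in $(\N\setminus\{0,1\})^\N$, and set $\beta(j)$ to be that multiplicity minus $1$; then $\mathcal{I}(\beta)$ reproduces the sequence and, because the tracial state of a UHF algebra is unique, $\Lip_\beta^k$ is forced to coincide with $\Lip$, so $\uhf{\beta,k}=(\A,\Lip)$.

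For the H\"older bound, fix distinct $\beta,\gamma\in\BaireSpace$ and let $N=\min\{n\in\N:\beta(n)\neq\gamma(n)\}$, so that $\mathsf{d}(\beta,\gamma)=2^{-N}$. Since $\beta$ and $\gamma$ agree on $\{0,\ldots,N-1\}$, the products $\boxtimes\beta(n)=\boxtimes\gamma(n)$ coincide for all $n\leq N$, the finite-dimensional algebras $\A_N:=\alg{M}(\boxtimes\beta(N))$ are literally the same, and by Lemma (\ref{uhf-trace-lemma}) the two traces agree on $\A_N$ (and on every $\A_n$ with $n\leq N$). The crucial claim I would then establish is that the two induced Lip-norms agree on this common subalgebra, i.e. $\Lip_\beta^k\circ\indmor{\alpha}{N}=\Lip_\gamma^k\circ\indmor{\alpha}{N}$. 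Indeed, for $a\in\A_N$ and $n\geq N$ the conditional expectation $\mathds{E}_n$ fixes $\indmor{\alpha}{N}(a)$, so those terms in the defining supremum vanish; for $n<N$ the value $\mathds{E}_n(\indmor{\alpha}{N}(a))$ is computed through Equation (\ref{cond-exp-eq}) purely from the inclusion $\A_n\hookrightarrow\A_N$ and the trace restricted to $\A_N$, data which $\beta$ and $\gamma$ share, while the denominators $\boxtimes\beta(n)^{-k}=\boxtimes\gamma(n)^{-k}$ also agree for $n\leq N$. Consequently $(\A_N,\Lip_\beta^k\circ\indmor{\alpha}{N})$ and $(\A_N,\Lip_\gamma^k\circ\indmor{\alpha}{N})$ are the same quantum compact metric space, hence at $\qpropinquity{}$-distance $0$.

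Finally, Theorem (\ref{AF-lip-norms-thm}) gives $\qpropinquity{}\big((\A_N,\Lip_\beta^k\circ\indmor{\alpha}{N}),\uhf{\beta,k}\big)\leq\boxtimes\beta(N)^{-k}$, the $N$-th term of the null sequence defining $\Lip_\beta^k$, and symmetrically for $\gamma$. Combining these with the triangle inequality through the common midpoint of the previous paragraph yields
\begin{equation*}
\qpropinquity{}\left(\uhf{\beta,k},\uhf{\gamma,k}\right) \leq \boxtimes\beta(N)^{-k} + \boxtimes\gamma(N)^{-k} = 2\,\boxtimes\beta(N)^{-k}\text{.}
\end{equation*}
Because every multiplicity is at least $2$, we have $\boxtimes\beta(N)=\prod_{j=0}^{N-1}(\beta(j)+1)\geq 2^{N}$, whence $2\,\boxtimes\beta(N)^{-k}\leq 2\cdot 2^{-Nk}=2\,\mathsf{d}(\beta,\gamma)^{k}$, which is precisely the $(2,k)$-H\"older inequality. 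I expect the main obstacle to be the crucial claim of the second paragraph --- verifying that the two Lip-norms restrict to \emph{identical} seminorms on the shared algebra $\A_N$ --- since this is exactly where the compatibility of the conditional expectations and the traces supplied by Lemma (\ref{uhf-trace-lemma}) must be invoked carefully; by contrast, once one uses the uniqueness of the trace on a UHF algebra, the surjectivity step is comparatively routine.
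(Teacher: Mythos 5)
Your proposal is correct and follows essentially the same route as the paper's own proof: the paper likewise reduces to the common finite-dimensional algebra $\alg{M}\left(\boxtimes\beta(N)\right)$ at the first index of disagreement, uses Lemma (\ref{uhf-trace-lemma}) and Expression (\ref{cond-exp-eq}) to show the two Lip-norms restrict to \emph{identical} seminorms there (so these pieces are at $\qpropinquity{}$-distance zero), and then concludes by the triangle inequality through this midpoint together with the bounds $\qpropinquity{}\left(\uhf{\beta,k},\left(\alg{M}\left(\boxtimes\beta(N)\right),\Lip_\beta^k\circ\indmor{\alpha}{N}\right)\right)\leq\boxtimes\beta(N)^{-k}\leq 2^{-Nk}$. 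Your surjectivity step also matches the paper's, with the understanding that the full-matrix inductive sequence you choose must be the one defining the given Lip-norm (the paper's ``by definition'' remark), after which uniqueness of the trace pins down the remaining data exactly as you say.
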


\begin{proof}
We fix $k \in (0,\infty)$. Let $\beta \in \BaireSpace$ and write $\mathcal{I}(\beta) = (\A_n,\alpha_n)_{n\in\N}$. Note that $\A_n = \alg{M}(\boxtimes\beta(n))$ for all $n\in\N$. Moreover, we denote $\uhf{\beta,k}$ by $(\A,\Lip_\A)$.

We begin with a uniform estimate on the propinquity.

Fix $n\in\N$. By definition, $\boxtimes\beta(n) \geq 2^n$. By Theorem (\ref{AF-lip-norms-thm}), we conclude:
\begin{equation*}
\qpropinquity{}((\A,\Lip_\A), (\indmor{\alpha}{n}(\A_n),\Lip_\A)) \leq \boxtimes\beta(n)^{-k} \leq 2^{-nk}\text{.}
\end{equation*}

Now, $(\indmor{\alpha}{n}(\A_n),\Lip_\A)$ and $(\A_n,\Lip_\A\circ\indmor{\alpha}{n})$ are isometrically isomorphic, so:
\begin{equation}\label{uhf-thm-eq1}
\qpropinquity{}((\A,\Lip_\A), (\A_n,\Lip_\A\circ\indmor{\alpha}{n})) \leq 2^{-nk}\text{.}
\end{equation}

Let now $\eta \in \BaireSpace$ and write $\mathcal{I}(\eta) = (\B_n,\alpha_n')_{n\in\N}$. Note that $\B_n = \alg{M}(\boxtimes\eta(n))$ for all $n\in\N$. Moreover, we denote $\uhf{\eta,k}$ by $(\B,\Lip_\B)$.

Let $N = -\log_2 \mathsf{d}(\beta,\eta)  \in \N $. 

If $N =0$, then the best estimate at our disposal is given by Corollary (\ref{fin-diam-cor}), and we conclude:
\begin{equation*}
\qpropinquity{}((\A,\Lip_\A),(\B,\Lip_\B)) \leq \max\{\boxtimes\beta(0),\boxtimes\eta(0)\} = 1 = \mathsf{d}(\eta,\beta)\text{.}
\end{equation*}

Assume now that $N \geq 1$. By definition, $\boxtimes\beta(j) =\boxtimes \eta(j)$ for all $j\in \{0,\ldots,N\}$. By Lemma (\ref{uhf-trace-lemma}), we note that $\A_N = \B_N = \alg{M}(\boxtimes\beta(N))$, and moreover:
\begin{equation*}
\mu_\A\circ\indmor{\alpha}{j} = \mu_\B\circ\indmor{\alpha'}{j}
\end{equation*}
for all $j\in \{0,\ldots,N\}$.

We now employ the notations of Section (\ref{exp-formula-sec}). For all $j\in \{0,\ldots,N\}$, we thus fix the canonical set $\{e_{k,m} \in \alg{M}(\boxtimes\beta(j)) : k, m \in I_j \}$ of $\alg{M}(\boxtimes\beta(j))$, where:
\begin{equation*}
I_j = \left\{ (k,m) \in \N^2 : 1\leq k,m \leq \boxtimes\beta(j)\right\}\text{.}
\end{equation*} 

Next, for all $j \in \{0, \ldots, N\}$, we have that $(\A_j,\alpha_j)=(\B_j, {\alpha'}_j )$. Therefore, if $j \in \{0, \ldots, N-1\}$,  then $\alpha_{j,N-1} = \alpha_{N-1} \circ \cdots \circ \alpha_j = {\alpha'}_{N-1} \circ \cdots \circ {\alpha'}_j = {\alpha'}_{j,N-1}$.  Also, by definition of the canonical maps $\indmor{\alpha}{n}$ and definition of inductive limit, we have that if $c \in \A_j$, then $\indmor{\alpha}{j}(c)=\indmor{\alpha}{N}(\alpha_{j,N-1} (c))=\indmor{\alpha}{N}({\alpha'}_{j,N-1} (c))$ for $j \in \{0, \ldots , N-1\}.$ Thus, from Expression (\ref{cond-exp-eq}) for all $a\in \alg{M}(\boxtimes \beta(N)), j \in \{0, \ldots, N-1\}$ we note:

\begin{equation}\label{induct-trace}
\begin{split}
&\left\| \indmor{\alpha}{N}(a) - \CondExp{\indmor{\alpha}{N}(a)}{\indmor{\alpha}{j}(\A_j)} \right\|_\A \\
& = \left\|\indmor{\alpha}{N}(a) - \sum_{l \in I_j}\frac{\mu_\A\left(\indmor{\alpha}{j}(e_l^\ast) \indmor{\alpha}{N}(a)\right)}{\mu_\A\left(\indmor{\alpha}{j}(e_l^\ast e_l)\right)}\indmor{\alpha}{j}(e_l)\right\|_\A \\
& =\left\|\indmor{\alpha}{N}(a) - \sum_{l \in I_j}\frac{\mu_\A\left(\indmor{\alpha}{N}(\alpha_{j,N-1}(e_l^\ast)) \indmor{\alpha}{N}(a)\right)}{\mu_\A\left(\indmor{\alpha}{N}(\alpha_{j,N-1}(e_l^\ast e_l))\right)}\indmor{\alpha}{N}(\alpha_{j,N-1}(e_l))\right\|_\A\\
&=\left\|a - \sum_{l \in I_j}\frac{\mu_\A\left(\indmor{\alpha}{N}(\alpha_{j,N-1}(e_l^\ast) a)\right)}{\mu_\A\left(\indmor{\alpha}{N}(\alpha_{j,N-1}(e_l^\ast e_l))\right)}\alpha_{j,N-1}(e_l)\right\|_{\alg{M}(\boxtimes\beta(N))} \\
&=\left\|a - \sum_{l \in I_j}\frac{\mu_\A \circ\indmor{\alpha}{N}\left({\alpha'}_{j,N-1}(e_l^\ast) a\right)}{\mu_\A \circ \indmor{\alpha}{N} \left({\alpha'}_{j,N-1}(e_l^\ast e_l)\right)}{\alpha'}_{j,N-1}(e_l)\right\|_{\alg{M}(\boxtimes\beta(N))}\\
&=\left\|a - \sum_{l \in I_j}\frac{\mu_\B \circ\indmor{\alpha'}{N}\left({\alpha'}_{j,N-1}(e_l^\ast) a\right)}{\mu_\B \circ \indmor{\alpha'}{N} \left({\alpha'}_{j,N-1}(e_l^\ast e_l)\right)}{\alpha'}_{j,N-1}(e_l)\right\|_{\alg{M}(\boxtimes\beta(N))}\\
&=\left\|\indmor{\alpha'}{N}(a) - \sum_{l \in I_j}\frac{\mu_\B\left(\indmor{\alpha'}{N}({\alpha'}_{j,N-1}(e_l^\ast) a)\right)}{\mu_\B\left(\indmor{\alpha'}{N}({\alpha'}_{j,N-1}(e_l^\ast e_l))\right)}\indmor{\alpha'}{N}({\alpha'}_{j,N-1}(e_l))\right\|_\B \\
& = \left\|\indmor{\alpha'}{N}(a) - \sum_{l \in I_j}\frac{\mu_\B\left(\indmor{\alpha'}{j}(e_l^\ast) \indmor{\alpha'}{N}(a)\right)}{\mu_\A\left(\indmor{\alpha'}{j}(e_l^\ast e_l)\right)}\indmor{\alpha'}{j}(e_l)\right\|_\B \\
&= \left\| \indmor{\alpha'}{N}(a) - \CondExp{\indmor{\alpha'}{N}(a)}{\indmor{\alpha'}{j}(\B_j)} \right\|_\B\text{.}
\end{split}
\end{equation}
If $j\geq N$, then $\CondExp{\indmor{\alpha}{N}(a)}{\indmor{\alpha}{j}(\A_j)}={\indmor{\alpha}{N}(a)}$ and $\CondExp{\indmor{\alpha'}{N}(a)}{\indmor{\alpha'}{j}(\B_j)}={\indmor{\alpha'}{N}(a)}$  by definition of conditional expectation.
Consequently, by definition:
\begin{equation*}
\Lip_\A\circ\indmor{\alpha}{N} = \Lip_\B\circ\indmor{\alpha'}{N}\text{,}
\end{equation*}
so:
\begin{equation}\label{uhf-thm-eq2}
\qpropinquity{}((\A_N,\Lip_\A\circ\indmor{\alpha}{N}), (\B_N, \Lip_\B\circ\indmor{\alpha'}{N})) = 0 \text{.}
\end{equation}

Hence, by the triangle inequality applied to Inequalities (\ref{uhf-thm-eq1}) and (\ref{uhf-thm-eq2}):
\begin{equation*}
\qpropinquity{}(\uhf{\beta,k}, \uhf{\eta,k}) \leq \frac{2}{2^{Nk}} \leq 2\mathsf{d}(\beta,\eta)^k \text{.}
\end{equation*}

Last, we show that the map $\uhf{\cdot,k}$ is a surjection. If $\alg{U}$ is a UHF algebra, then there exists an inductive sequence $\mathcal{I} = (\A_n,\alpha_n)_{n\in\N}$ of full matrix algebras whose limit is $\alg{U}$ and such that $\A_0 = \C$, while the multiplicity sequence $\beta$ of $\mathcal{I}$ is in $\N\setminus\{0,1\}$. Thus $\alg{u}((\beta(n)-1)_{n\in\N}) = \alg{U}$. Moreover, any Lip-norm $\Lip$ on $\alg{U}$ such that $(\alg{U},\Lip) \in \mathcal{UHF}^k$ can be obtained, by definition, from such a multiplicity sequence.

This concludes our theorem.
\end{proof}

\begin{remark}
Inequality (\ref{uhf-thm-eq1}) is sharp, as it becomes an inequality for the sequence $\beta = (1,1,1,\ldots) \in \BaireSpace$, and we note that the UHF algebra $\alg{u}(\beta)$ is the CAR algebra.
\end{remark}

\begin{remark}
Since $\mathsf{d}$ is an ultrametric on $\BaireSpace$, we conclude that $\mathsf{d}^k$ is a topologically equivalent ultrametric on $\BaireSpace$ as well. Hence, we could reformulate the conclusion of Theorem (\ref{uhf-thm}) by stating that $\uhf{\cdot,k}$ is $2$-Lipschitz for $\mathsf{d}^k$.
\end{remark}

\section{The geometry of the class of Effros-Shen AF algebras $\af{\theta}$ for $\qpropinquity{}$}\label{af-theta}

The original classification of irrational rotation algebras, due to Pimsner and Voi\-cu\-le\-scu \cite{PimVoi80a}, relied on certain embeddings into the AF algebras constructed from continued fraction expansions by Effros and Shen \cite{Effros80b}. In \cite{Latremoliere13c}, the second author proved that the irrational rotational algebras vary continuously in quantum propinquity with respect to their irrational parameter. It is natural to wonder whether the AF algebras constructed by Pimsner and Voiculescu vary continuously with respect to the quantum propinquity if parametrized by the irrational numbers at the root of their construction. We shall provide a positive answer to this problem in this section.

\subsection{Construction of $\af{\theta}$ for all $\theta \in (0,1)\setminus\Q$}
 
We begin by recalling the construction of the AF C*-algebras $\alg{AF}_\theta$ constructed in \cite{Effros80b} for any irrational $\theta$ in $(0,1)$. For any $\theta \in (0,1)\setminus\Q$, let $(r_j)_{j\in\N}$ be the unique sequence in $\N$ such that:
\begin{equation}\label{continued-fraction-eq}
\theta = \lim_{n\rightarrow\infty} r_0 + \cfrac{1}{r_1 + \cfrac{1}{r_2 + \cfrac{1}{r_3 +\cfrac{1}{\ddots+\cfrac{1}{r_n}}}}}\text{.}
\end{equation}
The sequence $(r_j)_{j\in\N}$ is called the continued fraction expansion of $\theta$, and we will simply denote it by writing $\theta = [r_0 , r_1 , r_2, \ldots ] = [r_j]_{j\in\N}$. We note that $r_0 = 0$ (since $\theta\in(0,1)$) and $r_n \in \N\setminus\{0\}$ for $n \geq 1 $.

We fix $\theta \in (0,1)\setminus\Q$, and let $\theta = [r_j]_{j\in\N}$ be its continued fraction decomposition. We then obtain a sequence $\left(\frac{p_n^\theta}{q_n^\theta}\right)_{n\in\N}$ with $p_n^\theta \in \N$ and $q_n^\theta \in \N\setminus\{0\}$ by setting:
\begin{equation}\label{pq-rel-eq}
\begin{cases}
\begin{pmatrix}p_1^\theta & q_1^\theta \\ p_0^\theta & q_0^\theta \end{pmatrix} = \begin{pmatrix}r_0r_1+1 & r_1 \\ r_0 & 1 \end{pmatrix}\\
\begin{pmatrix}p_{n+1}^\theta & q_{n+1}^\theta \\ p_n^\theta & q_n^\theta \end{pmatrix} = \begin{pmatrix}r_{n+1} & 1 \\ 1 & 0 \end{pmatrix}\begin{pmatrix}p_n^\theta & q_n^\theta \\ p_{n-1}^\theta & q_{n-1}^\theta \end{pmatrix}\text{ for all $n\in \N\setminus\{0\}$.}
\end{cases}
\end{equation}
We then note that $\left(\frac{p_n^\theta}{q_n^\theta}\right)_{n\in\N}$ converges to $\theta$.

Expression (\ref{pq-rel-eq}) contains the crux for the construction of the Effros-Shen AF algebras. 

\begin{notation}
Throughout this paper, we shall employ the notation $x\oplus y \in X\oplus Y$ to mean that $x\in X$ and $y\in Y$ for any two vector spaces $X$ and $Y$ whenever no confusion may arise, as a slight yet convenient abuse of notation.
\end{notation}

\begin{notation}\label{af-theta-notation}
Let $\theta \in (0,1)\setminus\Q$ and $\theta = [r_j]_{j\in\N}$ be the continued fraction expansion of $\theta$. Let $(p_n^\theta)_{n\in\N}$ and $(q_n^\theta)_{n\in\N}$ be defined by Expression (\ref{pq-rel-eq}). We set $\af{\theta,0} = \C$ and, for all $n\in\N\setminus\{0\}$, we set:
\begin{equation*}
\af{\theta, n} = \alg{M}(q_{n}^\theta) \oplus \alg{M}(q_{n-1}^\theta) \text{,}
\end{equation*}
and:
\begin{equation*}
\alpha_{\theta,n} : a\oplus b \in \af{\theta,n} \longmapsto \begin{pmatrix}
a & & &  \\
  & \ddots & & \\
  &        & a & \\
  &        &   & b 
\end{pmatrix} \oplus a \in \af{\theta, n+1} \text{,}
\end{equation*}
where $a$ appears $r_{n+1}$ times on the diagonal of the right hand side matrix above. We also set $\alpha_0$ to be the unique unital *-morphism from $\C$ to $\af{\theta,1}$.

We thus define the Effros-Shen C*-algebra $\af{\theta}$, after \cite{Effros80b}:
\begin{equation*}
\af{\theta} = \varinjlim \left(\af{\theta,n}, \alpha_{\theta,n}\right)_{n\in\N}\text{.}
\end{equation*} 
\end{notation}

In \cite{PimVoi80a}, Pimsner and Voiculescu construct, for any $\theta\in(0,1)\setminus\Q$, a unital *-mono\-mor\-phism from the irrational rotation C*-algebra $\A_\theta$, i.e. the universal C*-algebra generated by two unitaries $U$ and $V$ subject to $UV = \exp(2i\pi\theta)VU$, into $\af{\theta}$. This was a crucial step in their classification of irrational rotation algebras and started a long and fascinating line of investigation about AF embeddings of various C*-algebras.

In order to apply our Theorem (\ref{AF-lip-norms-thm}), we need to find a faithful tracial state on $\af{\theta}$, for all $\theta\in (0,1)\setminus\Q$. This is the matter we address in our next subsection.

\subsection{The tracial state of $\af{\theta}$}

We shall prove that for all $\theta\in(0,1)\setminus\Q$, there exists a unique tracial state on $\af{\theta}$ which will be faithful as $\af{\theta}$ is simple (note that there must exists at least one tracial state on any unital simple AF algebra). The source of our tracial state will be the K-theory of $\af{\theta}$.

We refer to \cite[Section VI.3]{Davidson} for the computation of the Elliott invariant of $\af{\theta}$, which reads:
\begin{theorem}[\cite{Effros80b}]
Let $\theta\in(0,1)\setminus\Q$ and let $C_\theta= \{(x,y) \in \Z^2 : \theta x+ y \geq 0 \}$. Then $K_0(\af{\theta}) = \Z^2$ with positive cone $C_\theta$ and order unit $(0,1)$.   
Thus the only state of the ordered group $(K_0(\af{\theta}), C_\theta,(0,1))$ is given by the map:
\begin{equation*}
(x,y) \in \Z^2 \longmapsto \theta x + y\text{.}
\end{equation*}
Thus $\af{\theta}$ has a unique tracial state, denoted by $\sigma_\theta$.
\end{theorem}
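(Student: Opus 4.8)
The plan is to reduce the statement to a computation of the ordered group $K_0(\af{\theta})$ with its canonical order unit, and then to invoke the standard correspondence between tracial states of a unital AF algebra and states of its ordered $K_0$-group. First I would use continuity of $K_0$ under inductive limits: since $\af{\theta}=\varinjlim(\af{\theta,n},\alpha_{\theta,n})$ with $\af{\theta,n}=\alg{M}(q_n^\theta)\oplus\alg{M}(q_{n-1}^\theta)$ by Notation (\ref{af-theta-notation}), one has $K_0(\af{\theta})=\varinjlim(\Z^2,(\alpha_{\theta,n})_\ast)$. Reading the multiplicities off the embedding in Notation (\ref{af-theta-notation}) — a minimal projection in the first summand acquires rank $r_{n+1}$ in the first summand and rank $1$ in the second, while a minimal projection in the second summand acquires rank $1$ in the first summand and rank $0$ in the second — identifies $(\alpha_{\theta,n})_\ast$ with the integer matrix on the right-hand side of Expression (\ref{pq-rel-eq}). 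Each such matrix has determinant $-1$, hence lies in $\mathrm{GL}_2(\Z)$; since all connecting maps from stage one on are isomorphisms, the limit group is $\Z^2$.

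Next I would identify the order structure. At stage $n$ the positive cone is $\{(x,y):x\geq 0, y\geq 0\}$ and the class of the unit is the dimension vector $(q_n^\theta,q_{n-1}^\theta)$, which the recursion (\ref{pq-rel-eq}) carries consistently to the next stage. To see that the increasing union of the transported cones is $C_\theta$, I would exploit the continued-fraction identities $p_n^\theta q_{n-1}^\theta - p_{n-1}^\theta q_n^\theta = (-1)^{n-1}$ together with the fact that $\theta q_n^\theta - p_n^\theta$ has sign $(-1)^n$ and tends to $0$. Concretely, the normalized trace functional at stage $n$ on a rank vector $(x,y)$ is $|q_{n-1}^\theta\theta - p_{n-1}^\theta|\,x+|q_n^\theta\theta-p_n^\theta|\,y$; these functionals are compatible with the connecting maps and, in the limit coordinates, converge to $(x,y)\mapsto\theta x+y$, while the unit is normalized to $(0,1)$. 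I expect the careful choice of a limit coordinate system, so that the cone becomes exactly $C_\theta$ and the unit becomes $(0,1)$, to be the main bookkeeping obstacle; this is precisely the computation of the Elliott invariant recorded in \cite{Effros80b} and \cite[Section VI.3]{Davidson}, on which I would rely.

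Granting $K_0(\af{\theta})=(\Z^2,C_\theta,(0,1))$, the uniqueness of the state is short. A state is a homomorphism $s:\Z^2\to\R$, so $s(x,y)=ux+vy$, and normalization at the order unit forces $v=s(0,1)=1$. For the convergents, $(q_n^\theta,-p_n^\theta)\in C_\theta$ whenever $\theta q_n^\theta-p_n^\theta\geq 0$, and $(-q_n^\theta,p_n^\theta)\in C_\theta$ otherwise; applying $s\geq 0$ and dividing by $q_n^\theta$ gives $u\geq p_n^\theta/q_n^\theta$ along the even-indexed convergents and $u\leq p_n^\theta/q_n^\theta$ along the odd-indexed ones. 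Since the even convergents increase to $\theta$ and the odd convergents decrease to $\theta$, this squeezes $u=\theta$, so $s(x,y)=\theta x+y$ is the only state of the ordered group.

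Finally, by the standard affine bijection between tracial states of a unital AF algebra and states of its ordered $K_0$-group with distinguished order unit \cite{Davidson}, uniqueness of the state yields a unique tracial state, which I denote $\sigma_\theta$. Faithfulness is then automatic: $\af{\theta}$ is a simple unital AF algebra, so the closed ideal generated by the kernel of $\sigma_\theta$ must be trivial, whence $\sigma_\theta$ is faithful. This completes the plan.
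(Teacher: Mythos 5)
Your proposal is correct and takes essentially the same route as the paper: the paper offers no proof of this statement at all, quoting the Elliott invariant $(\Z^2, C_\theta, (0,1))$ directly from \cite{Effros80b} and \cite[Section VI.3]{Davidson} --- precisely the references you defer to for the identification of the positive cone --- while your computation of the connecting maps as the matrices of Expression (5.2), your squeeze argument along even and odd convergents forcing $u = \theta$, and the standard affine bijection between tracial states of a unital AF algebra and states of $(K_0, K_0^+, [\unit])$ are the standard (and correct) way of filling in what the citation leaves implicit. One minor wording caveat: the kernel of $\sigma_\theta$ as a linear functional is not an ideal; faithfulness follows because the tracial property makes $\left\{ a \in \af{\theta} : \sigma_\theta(a^\ast a) = 0 \right\}$ a closed two-sided ideal, which simplicity of $\af{\theta}$ forces to be $\{0\}$.
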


\begin{notation}\label{af-theta-lip-norm-notation}
Let $\theta\in(0,1)\setminus \Q$ and $k \in (0,\infty)$. The Lip-norm $\Lip_\theta^k$ on $\af{\theta}$ is the lower semi-continuous, $(2,0)$-quasi Leibniz Lip-norm $\Lip_{\mathcal{I}(\theta),\sigma_\theta}^k$ defined in Notation (\ref{AF-class-notation}) based on Theorem (\ref{AF-lip-norms-thm}), where $\mathcal{I}(\theta) = (\af{\theta,n},\alpha_{\theta,n})_{n\in\N}$ as in Notation (\ref{af-theta-notation}).
\end{notation}

As Theorem (\ref{AF-lip-norms-thm}) provides Lip-norms based, in part, on the choice of a faithful tracial state, a more precise understanding of the unique faithful tracial state of $\af{\theta}$ is required. We summarize our observations in the following lemma.

\begin{lemma}\label{af-theta-trace-lemma}
Let $\theta \in (0,1)\setminus\Q$ and let $\sigma_\theta$ be the unique tracial state of $\af{\theta}$, and fix $n\in\N\setminus\{0\}$. Using Notation (\ref{af-theta-notation}), let:
\begin{equation*}
\sigma_{\theta,n} = \sigma_\theta\circ\indmor{\alpha_\theta}{n}\text{.}
\end{equation*}
Let $\mathrm{tr}_d$ be the unique tracial state on $\alg{M}(d)$ for any $d\in\N$. Then, if $(p^\theta_n)_{n\in\N}$ and $(q_n^\theta)_{n\in\N}$ are defined by Expression (\ref{pq-rel-eq}), then:
\begin{equation*}
\sigma_{\theta,n} : a\oplus b \in \af{\theta,n} \longmapsto t(\theta,n)\mathrm{tr}_{q_{n}^\theta}(a) + (1-t(\theta,n))\mathrm{tr}_{q_{n-1}^\theta}(b)\text{,}
\end{equation*}
where
\begin{equation*}
t(\theta,n) = (-1)^{n-1} q_n^\theta\left(\theta q_{n-1}^\theta - p_{n-1}^\theta\right) \in (0,1) \text{.}
\end{equation*}
\end{lemma}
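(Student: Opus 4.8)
The plan is to bypass computing, inside $K_0(\af{\theta})$, the images of the minimal projections of the two summands, and instead to exhibit the trace by an educated guess and pin it down using the uniqueness of $\sigma_\theta$. Since $\af{\theta,n} = \alg{M}(q_n^\theta)\oplus\alg{M}(q_{n-1}^\theta)$ is finite dimensional, every tracial state on it is a convex combination of the unique tracial states of its two matrix summands, so $\sigma_{\theta,n}$ is automatically of the form $a\oplus b\mapsto t_n\,\mathrm{tr}_{q_n^\theta}(a) + (1-t_n)\mathrm{tr}_{q_{n-1}^\theta}(b)$ for some $t_n\in[0,1]$; the whole content of the lemma is then the identity $t_n = t(\theta,n)$. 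I would accordingly define, for each $n\geq 1$, the candidate tracial state $\tau_n$ on $\af{\theta,n}$ by that formula with $t_n$ replaced by $t(\theta,n)$, and prove $\tau_n = \sigma_{\theta,n}$.

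First I would record the two elementary facts about the convergents on which the computation rests. Writing $\delta_m = \theta q_m^\theta - p_m^\theta$, the recursion (\ref{pq-rel-eq}) gives $q_{m+1}^\theta = r_{m+1}q_m^\theta + q_{m-1}^\theta$ and $\delta_{m+1} = r_{m+1}\delta_m + \delta_{m-1}$, while taking determinants in (\ref{pq-rel-eq}), whose $2\times 2$ factor has determinant $-1$ and whose initial matrix has determinant $1$, yields $p_n^\theta q_{n-1}^\theta - p_{n-1}^\theta q_n^\theta = (-1)^{n-1}$, and hence $q_n^\theta\delta_{n-1} - q_{n-1}^\theta\delta_n = (-1)^{n-1}$. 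From this last identity one reads off $t(\theta,n) = (-1)^{n-1}q_n^\theta\delta_{n-1}$ and $1 - t(\theta,n) = (-1)^n q_{n-1}^\theta\delta_n$. Standard continued-fraction theory gives $\operatorname{sign}\delta_m = (-1)^m$, so both $t(\theta,n) = q_n^\theta|\delta_{n-1}|$ and $1-t(\theta,n) = q_{n-1}^\theta|\delta_n|$ are strictly positive; this establishes $t(\theta,n)\in(0,1)$ and makes each $\tau_n$ a genuine tracial state. (Strict positivity can also be obtained for free once $\tau_n = \sigma_{\theta,n}$ is known, since $\sigma_\theta$ is faithful and $\indmor{\alpha_\theta}{n}$ injective, forcing both coefficients to be nonzero.)

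The central step is to show that the family $(\tau_n)_{n\geq 1}$ is consistent with the connecting maps, that is, $\tau_{n+1}\circ\alpha_{\theta,n} = \tau_n$ for every $n$. Expanding the left-hand side via $\alpha_{\theta,n}(a\oplus b) = \operatorname{diag}(a,\dots,a,b)\oplus a$ (with $a$ repeated $r_{n+1}$ times) and the fact that the normalized trace of this block-diagonal matrix is $\tfrac{1}{q_{n+1}^\theta}\bigl(r_{n+1}q_n^\theta\,\mathrm{tr}_{q_n^\theta}(a) + q_{n-1}^\theta\,\mathrm{tr}_{q_{n-1}^\theta}(b)\bigr)$, the coefficient of $\mathrm{tr}_{q_{n-1}^\theta}(b)$ becomes $t(\theta,n+1)\tfrac{q_{n-1}^\theta}{q_{n+1}^\theta}$ and the coefficient of $\mathrm{tr}_{q_n^\theta}(a)$ becomes $t(\theta,n+1)\tfrac{r_{n+1}q_n^\theta}{q_{n+1}^\theta} + \bigl(1-t(\theta,n+1)\bigr)$. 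Using $t(\theta,n+1) = (-1)^n q_{n+1}^\theta\delta_n$, the relation $q_{n+1}^\theta - r_{n+1}q_n^\theta = q_{n-1}^\theta$, and the two identities recorded above, these reduce exactly to $1-t(\theta,n)$ and $t(\theta,n)$ respectively. This is the one genuine computation in the argument, and I expect it to be the only place requiring care.

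Finally I would conclude by uniqueness of the trace. A consistent family of tracial states on the $\af{\theta,n}$ defines a tracial state $\tau$ on the dense subalgebra $\bigcup_n\indmor{\alpha_\theta}{n}(\af{\theta,n})$ of $\af{\theta}$, which extends by continuity to a tracial state of $\af{\theta}$; since $\sigma_\theta$ is the \emph{unique} tracial state of $\af{\theta}$ (already obtained from the Effros-Shen computation of the ordered group $K_0(\af{\theta})$ stated just before the lemma), we must have $\tau = \sigma_\theta$, and therefore $\tau_n = \tau\circ\indmor{\alpha_\theta}{n} = \sigma_{\theta,n}$ for all $n$, which is precisely the asserted formula. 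Thus the $K$-theory enters only through the already-cited uniqueness of $\sigma_\theta$, and no explicit tracking of $K_0$-classes of projections is needed.
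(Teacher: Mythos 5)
Your proof is correct, and it takes a genuinely different route from the paper's. The paper computes $t(\theta,n)$ head-on: it evaluates $\sigma_{\theta,n}$ at the projection $\unit_{\alg{M}(q_n^\theta)}\oplus 0$, identifies that value with the image of the corresponding $K_0$-class under the unique state of the ordered group $\left(K_0(\af{\theta}), C_\theta, (0,1)\right)$, and uses the explicit matrix form of $K_0\left(\indmor{\alpha_\theta}{n}\right)$, namely $(-1)^{n-1}\begin{pmatrix} q^\theta_{n-1} & -q^\theta_{n} \\ -p^\theta_{n-1} & p^\theta_n \end{pmatrix}$, cited from Davidson; the bound $t(\theta,n)<1$ then comes from the same determinant identity $p_n^\theta q_{n-1}^\theta - p_{n-1}^\theta q_n^\theta = (-1)^{n-1}$ you use. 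You instead guess the answer, check that the candidate family $(\tau_n)_n$ is coherent under the connecting maps $\alpha_{\theta,n}$ --- a direct block-trace computation resting only on $q_{n+1}^\theta = r_{n+1}q_n^\theta + q_{n-1}^\theta$ and the determinant identity --- and then invoke uniqueness of the tracial state on $\af{\theta}$ to conclude $\tau_n = \sigma_{\theta,n}$. (I verified your coefficient computation; it is exactly right.) What each approach buys: the paper's argument is shorter once one accepts the explicit $K_0$ maps from the literature, whereas yours avoids tracking $K_0$-classes entirely and is self-contained modulo the uniqueness of $\sigma_\theta$, which is quoted before the lemma anyway; both ultimately lean on the Effros--Shen $K$-theory through that uniqueness. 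Two small remarks. First, your parenthetical claim that positivity comes ``for free'' after the identification is mildly circular as stated: to run the extension-and-uniqueness argument you need each $\tau_n$ to be a genuine state, hence $t(\theta,n)\in[0,1]$ beforehand; faithfulness of $\sigma_\theta$ then only upgrades this to strict inequalities. Since your main line establishes strict positivity directly from $\operatorname{sign}(\theta q_m^\theta - p_m^\theta) = (-1)^m$, this is harmless. Second, in the final step you should note (as you implicitly do) that the extension of $\tau$ from the dense union is bounded because each restriction to $\indmor{\alpha_\theta}{n}(\af{\theta,n})$ is a state, so $|\tau|\leq\|\cdot\|_{\af{\theta}}$ on the union; positivity of the extension then follows from $\|\tau\| = \tau(\unit)$.
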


\begin{proof}
The map $\sigma_{\theta,n}$ is a tracial state on $\af{\theta,n} = \alg{M}(q^\theta_n)\oplus\alg{M}(q^\theta_{n-1})$, and thus there exists $t(n,\theta) \in [0,1]$ such that for all $a\oplus b\in\af{\theta,n}$:
\begin{equation*}
\sigma_{\theta,n} (a\oplus b) =t(\theta,n)  \mathsf{tr}_{q^\theta_n}(a) +(1-t(\theta,n))\mathsf{tr}_{q^\theta_{n-1}}(b)\text{.}
\end{equation*}
Let $\sigma_\ast : K_0(\af{\theta})\rightarrow\R$ be the state induced by $\sigma_\theta$ on the $K_0$ group of $\af{\theta}$. We then have:
\begin{equation}\label{af-theta-trace-lemma-eq1}
\begin{split}
t(\theta,n) &= \sigma_{\theta,n}(\unit_{\alg{M}(q^\theta_{n})}\oplus 0) \\
&= \sigma_{\theta}\circ\indmor{\alpha_\theta}{n}(\unit_{\alg{M}(q^\theta_{n})}\oplus 0)\\
&= \sigma_\ast\circ K_0\left(\indmor{\alpha_\theta}{n}\right)\left(\begin{pmatrix}q^\theta_n \\ 0\end{pmatrix}\right)
\end{split}
\end{equation}
where $K_0\left(\indmor{\alpha_\theta}{n}\right)$ is the map from $K_0(\af{\theta,n}) = \Z^2$ to $K_0(\af{\theta}) = \Z^2$ induced by $\indmor{\alpha}{n}$. By construction, following \cite[Section VI.3]{Davidson}, we have:
\begin{equation*}
K_0\left(\indmor{\alpha_\theta}{n}\right) \begin{pmatrix} z1 \\ z2 \end{pmatrix} = (-1)^{n-1} \begin{pmatrix} q^\theta_{n-1} & -q^\theta_{n} \\ -p^\theta_{n-1} & p^\theta_n \end{pmatrix} \begin{pmatrix} z1 \\ z2 \end{pmatrix}
\end{equation*}
for all $(z1,z2) \in \Z^2$. Therefore:
\begin{equation*}
\begin{split}
t(\theta,n) &= (-1)^{n-1} \sigma_\ast \left(\begin{pmatrix}
q^\theta_{n-1} & -q^\theta_{n} \\ -p^\theta_{n-1} & p^\theta_n  
\end{pmatrix} 
\begin{pmatrix} q^\theta_{n} \\ 0 \end{pmatrix} \right)\\
&= (-1)^{n-1} \sigma_\ast\left(\begin{pmatrix}  q^\theta_{n-1} q^\theta_{n} \\ -p^\theta_{n-1} q^\theta_{n} \end{pmatrix}\right)\\
&= (-1)^{n-1} q^\theta_{n}\left(\theta q^\theta_{n-1} - p^\theta_{n-1}\right)\text{.} 
\end{split}
\end{equation*}

Since $\theta$ is irrational, $t(\theta,n) \not= 0$. Since $\unit_{\alg{M}(q^\theta_{n})}\oplus 0$ is positive in $\af{\theta,n}$ and less than $\unit_{\af{\theta,n}}$, we conclude $t(\theta,n) \in (0,1]$.

To prove that $t(\theta,n) < 1$, we may proceed following two different routes. Applying a similar computation as in Expression (\ref{af-theta-trace-lemma-eq1}), we get:
\begin{equation*}
\sigma_{\theta,n}\left(0\oplus \unit_{\alg{M}(q^\theta_{n-1})}\right) = (-1)^{n} q^\theta_{n-1} \left(\theta q^\theta_{n} - p^\theta_{n}\right)\text{,}
\end{equation*}
and again as $\theta$ is irrational, this quantity is nonzero. As $1 = \sigma_{\theta,n}\left(\unit_{\alg{M}(q^\theta_{n})} \oplus \unit_{\alg{M}(q^\theta_{n-1})}\right)$, our lemma would thus be proven.

Instead, we employ properties of continued fraction expansions and note that since $p^\theta_{n}q^\theta_{n-1} - p^\theta_{n-1}q^\theta_{n} = (-1)^{n-1}$:
\begin{equation*}
\begin{split}
1-t(\theta,n) &= 1 - (-1)^{n-1} q^\theta_{n}(\theta q^\theta_{n-1} - p^\theta_{n-1})\\
&= (-1)^{n-1} \left( (-1)^{n-1} - q^\theta_{n}(\theta q^\theta_{n-1} - p^\theta_{n-1} )\right) \\
&= (-1)^{n-1} \left( p^\theta_{n}q^\theta_{n-1} - p^\theta_{n-1} q^\theta_{n} - q^\theta_{n}(\theta q^\theta_{n-1} - p^\theta_{n-1}) \right)\\
&= (-1)^{n} \left( q^\theta_{n}(\theta q^\theta_{n-1}) -p^\theta_{n}q^\theta_{n-1} \right)\\
&= (-1)^{n} q^\theta_{n-1} \left(\theta q^\theta_{n} - p^\theta_{n}\right)\text{,}
\end{split}
\end{equation*}
which is nonzero as $\theta$ is irrational, and is less than one since $t(\theta,n) > 0$. This concludes our proof.
\end{proof}

\begin{remark}
We may also employ properties of continued fractions expansions to show that $t(\theta,n) > 0$ for all $n\in\N$. We shall use the notations of the proof of Lemma (\ref{af-theta-trace-lemma}). We have:
\begin{equation*}
\frac{p^\theta_{2n}}{q^\theta_{2n}} < \theta < \frac{p^\theta_{2n+1}}{q^\theta_{2n+1}}
\end{equation*}
and thus $\theta q^\theta_{2n} - p^\theta_{2n} > 0$ and $p^\theta_{2n+1}-\theta q^\theta_{2n+1} > 0$, which shows that $t(\theta,n) > 0$ for all $n\in\N$ (note that $q^\theta_n \in \N\setminus\{0\}$ for all $n\in\N$ since $\theta > 0$).
\end{remark}

We wish to employ Expression (\ref{cond-exp-eq}) and thus, we will find the following computation helpful:

\begin{lemma}\label{mu-norm-lemma}
Let $\theta\in (0,1)\setminus\Q$ and let $n\in\N\setminus\{0\}$. Let $\{e_{1,j,m} \in \af{\theta,n} : 1\leq j,m \leq q^\theta_{n}\}$ and $\{e_{2,j,m} \in \af{\theta,n} : 1\leq j,m \leq q^\theta_{n-1} \}$  be the standard family of matrix units in, respectively, $\alg{M}(q^\theta_{n})$ and $\alg{M}(q^\theta_{n-1})$ inside $\af{\theta,n} = \alg{M}(q^\theta_{n}) \oplus \alg{M}(q^\theta_{n-1})$, as in Section (\ref{exp-formula-sec}) and with $(p_n^\theta)_{n\in\N}$ and $(q_n^\theta)_{n\in\N}$ defined by Expression (\ref{pq-rel-eq}).

For $1 \leq j,m \leq q^\theta_{n} $, we compute:
\begin{equation*}
\sigma_\theta \left( \indmor{\alpha}{n}(e_{1,j,m}^\ast  e_{1,j,m}) \right)= (-1)^{n-1}(\theta q^\theta_{n-1} -p^\theta_{n-1})
\end{equation*}
while, for $ 1 \leq j,m \leq q^\theta_{n-1}$:
\begin{equation*}
\sigma_\theta ( \indmor{\alpha}{n}(e_{2,j,m}^\ast e_{2,j,m}) )= (-1)^{n}(\theta q^\theta_{n} -p^\theta_{n} ).
\end{equation*}
\end{lemma}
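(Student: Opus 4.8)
The plan is to reduce both computations directly to the explicit description of the tracial state $\sigma_{\theta,n} = \sigma_\theta \circ \indmor{\alpha}{n}$ obtained in Lemma \ref{af-theta-trace-lemma}. The first observation I would make is purely algebraic: for any family of matrix units one has $e_{k,j,m}^\ast = e_{k,m,j}$ and hence $e_{k,j,m}^\ast e_{k,j,m} = e_{k,m,j} e_{k,j,m} = e_{k,m,m}$, a minimal diagonal projection in the $k$-th summand of $\af{\theta,n}$. Thus each quantity in the statement is simply $\sigma_\theta$ evaluated on $\indmor{\alpha}{n}$ of a minimal diagonal projection lying entirely in one of the two blocks $\alg{M}(q_n^\theta)$ or $\alg{M}(q_{n-1}^\theta)$; in particular the answer will not depend on the indices $j,m$, which is consistent with the form of the claimed identities. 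I would also recall from Section \ref{exp-formula-sec} that the normalized trace of such a minimal projection in $\alg{M}(d)$ equals $\frac{1}{d}$.

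For the first identity, $e_{1,m,m}$ sits in the summand $\alg{M}(q_n^\theta)$, so writing it as $e_{1,m,m}\oplus 0$ and applying Lemma \ref{af-theta-trace-lemma} gives $\sigma_\theta(\indmor{\alpha}{n}(e_{1,m,m})) = t(\theta,n)\,\mathrm{tr}_{q_n^\theta}(e_{1,m,m}) = t(\theta,n)/q_n^\theta$. Substituting $t(\theta,n) = (-1)^{n-1} q_n^\theta(\theta q_{n-1}^\theta - p_{n-1}^\theta)$ cancels the factor $q_n^\theta$ and yields $(-1)^{n-1}(\theta q_{n-1}^\theta - p_{n-1}^\theta)$, as claimed. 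The second identity is entirely symmetric: now $e_{2,m,m}$ lies in $\alg{M}(q_{n-1}^\theta)$, so $\sigma_\theta(\indmor{\alpha}{n}(e_{2,m,m})) = (1-t(\theta,n))/q_{n-1}^\theta$, and here I would invoke the closed form $1-t(\theta,n) = (-1)^n q_{n-1}^\theta(\theta q_n^\theta - p_n^\theta)$ already derived inside the proof of Lemma \ref{af-theta-trace-lemma} from the determinant relation $p_n^\theta q_{n-1}^\theta - p_{n-1}^\theta q_n^\theta = (-1)^{n-1}$, so that the factor $q_{n-1}^\theta$ again cancels and leaves $(-1)^n(\theta q_n^\theta - p_n^\theta)$.

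There is no genuine obstacle here: the entire weight of the lemma has been front-loaded into Lemma \ref{af-theta-trace-lemma}, and what remains is the elementary identity $e_{k,j,m}^\ast e_{k,j,m} = e_{k,m,m}$ together with careful sign bookkeeping. The only point I would be careful about is to use the already-computed expression for $1-t(\theta,n)$ rather than recomputing it, and to keep the two sign parities $(-1)^{n-1}$ and $(-1)^n$ attached to the correct summand.
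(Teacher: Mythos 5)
Your proposal is correct and follows essentially the same route as the paper: both reduce the computation to the explicit formula $\sigma_{\theta,n}(a\oplus b)=t(\theta,n)\mathrm{tr}_{q_n^\theta}(a)+(1-t(\theta,n))\mathrm{tr}_{q_{n-1}^\theta}(b)$ of Lemma (\ref{af-theta-trace-lemma}), observe that $e_{k,j,m}^\ast e_{k,j,m}$ is a minimal projection with normalized trace $1/q$, and cancel the factor of $q$ against the expressions for $t(\theta,n)$ and $1-t(\theta,n)$. The only difference is cosmetic: the paper writes out the first block and dismisses the second as ``a similar argument,'' whereas you spell out the second block explicitly via the identity $1-t(\theta,n)=(-1)^n q_{n-1}^\theta(\theta q_n^\theta-p_n^\theta)$ already established inside the proof of Lemma (\ref{af-theta-trace-lemma}).
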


\begin{proof}  
Let $ 1\leq j,m \leq q^\theta_{n} $. By Lemma (\ref{af-theta-trace-lemma}), we have:
\begin{equation*}
\begin{split}
\sigma_\theta \left( \indmor{\alpha}{n}(e_{1,j,m}^\ast  e_{1,j,m}) \right) & = t(\theta,n) \mathsf{tr}_{q^\theta_{n}} (e_{1,j,m}^\ast  e_{1,j,m})  + (1-t(\theta,n))\cdot 0 \\
&=\frac{t(\theta,n)}{q^\theta_{n}} \\
&=(-1)^{n-1}(\theta q^\theta_{n-1} -p^\theta_{n-1} ).
\end{split}
\end{equation*}
And, a similar argument proves the result for the other matrix units.
\end{proof}

\subsection{Continuity of $\theta \in (0,1)\setminus\Q \mapsto \af{\theta}$}

Our proof that the map $\theta\in(0,1)\setminus\Q\mapsto (\af{\theta},\Lip_\theta)$ is continuous for the quantum propinquity relies on a homeomorphism between the Baire space of Definition (\ref{Baire-Space-def}) and $(0,1)\setminus\Q$, endowed with its topology as a subspace of $\R$. Indeed, the map which associates, to an irrational number in $(0,1)$, its continue fraction expansion is a homeomorphism (see, for instance, \cite{Miller95}). We include a brief proof of this fact as, while it is well-known, the proof is often skipped in references. Moreover, this will serve as a means to set some other useful notations for our work.

\begin{notation}
Define $\mathsf{cf} : (0,1)\setminus\Q \rightarrow \BaireSpace$ by setting $\mathsf{cf}(\theta) = (b_n)_{n\in\N}$ if and only if $\theta = [0, b_0, b_1, \ldots]$. We note that $\mathsf{cf}$ is a bijection from $(0,1)\setminus\Q $ onto $\BaireSpace$, where $\BaireSpace$ is the Baire space defined in Definition (\ref{Baire-Space-def}). The inverse of $\mathsf{cf}$ is denote by $\mathrm{ir} : \BaireSpace \rightarrow (0,1)\setminus\Q$. 
\end{notation} 

\begin{notation}
We will denote the closed ball in $(\BaireSpace,\mathsf{d})$ of center $x \in \BaireSpace$ and radius $2^{-N}$ by $\BaireSpace[x,N]$ for $N>0$. It consists of all sequences in $\BaireSpace$ whose $N$ first entries are the same as the $N$ first entries of $x$.  
\end{notation}

\begin{proposition}  
The bijection:
\begin{equation*}
\mathsf{cf} :  ((0,1) \setminus \mathds{Q}, \vert \cdot \vert  ) \longrightarrow  (\mathscr{N}, \mathsf{d})
\end{equation*}
is a homeomorphism.
\end{proposition}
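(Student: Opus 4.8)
The plan is to exploit that both $((0,1)\setminus\Q, |\cdot|)$ and $(\BaireSpace,\mathsf{d})$ are metric spaces and that $\mathsf{cf}$ is already known to be a bijection, so it suffices to prove that $\mathsf{cf}$ and its inverse $\mathrm{ir}$ are each continuous. The single tool underlying both directions is a description of the \emph{cylinders}: for $\theta = [0,r_1,r_2,\ldots]$ and $N\geq 1$, the set of irrationals in $(0,1)$ whose first $N$ partial quotients coincide with $r_1,\ldots,r_N$ --- equivalently $\mathsf{cf}^{-1}(\BaireSpace[\mathsf{cf}(\theta),N])$ --- is exactly the set of irrationals lying in an open interval whose endpoints are the convergent $\frac{p_N^\theta}{q_N^\theta}$ and the mediant $\frac{p_N^\theta + p_{N-1}^\theta}{q_N^\theta + q_{N-1}^\theta}$, both rational. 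I will record the standard facts from Expression (\ref{pq-rel-eq}) that drive the argument: the recursion $q_{n+1}^\theta = r_{n+1}q_n^\theta + q_{n-1}^\theta \geq q_n^\theta + q_{n-1}^\theta$ (so $q_n^\theta \to\infty$ with Fibonacci-type growth), the identity $p_n^\theta q_{n-1}^\theta - p_{n-1}^\theta q_n^\theta = (-1)^{n-1}$ already used in Lemma (\ref{af-theta-trace-lemma}), and the fact that $\frac{p_{n-1}^\theta}{q_{n-1}^\theta}$ and $\frac{p_n^\theta}{q_n^\theta}$ lie on opposite sides of $\theta$.

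To obtain the cylinder description I would use the complete-quotient formula: writing the tail $\xi = [r_{N+1},r_{N+2},\ldots] > 1$, every $\theta$ sharing the initial block satisfies $\theta = \frac{\xi p_N^\theta + p_{N-1}^\theta}{\xi q_N^\theta + q_{N-1}^\theta}$, and the M\"obius transformation $\xi \mapsto \frac{\xi p_N^\theta + p_{N-1}^\theta}{\xi q_N^\theta + q_{N-1}^\theta}$ is strictly monotone on $(1,\infty)$ since its Jacobian carries the nonzero sign $(-1)^{N-1}$. Hence it maps $(1,\infty)$ homeomorphically onto the open interval between its limit at $\xi\to\infty$, namely $\frac{p_N^\theta}{q_N^\theta}$, and its value at $\xi = 1$, the mediant. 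As integer M\"obius maps preserve irrationality, the irrationals of this interval are precisely the cylinder, and its length equals $\frac{1}{q_N^\theta(q_N^\theta + q_{N-1}^\theta)} \leq \frac{1}{q_{N-1}^\theta q_N^\theta}$.

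Continuity of $\mathrm{ir}$ then follows with an explicit modulus: if $\mathsf{d}(x,y)\leq 2^{-N}$, then $x$ and $y$ agree on their first $N$ entries, so, setting $\theta = \mathrm{ir}(x)$, both $\mathrm{ir}(x)$ and $\mathrm{ir}(y)$ lie in the common rank-$N$ cylinder and $|\mathrm{ir}(x) - \mathrm{ir}(y)| \leq \frac{1}{q_{N-1}^\theta q_N^\theta}$, which tends to $0$ as $N\to\infty$ by the growth of the denominators. Continuity of $\mathsf{cf}$ follows from openness of cylinders: given $\theta$ and $N$, the set $\mathsf{cf}^{-1}(\BaireSpace[\mathsf{cf}(\theta),N])$ is the trace on the irrationals of an open interval containing $\theta$, hence contains some $(\theta-\delta,\theta+\delta)\cap((0,1)\setminus\Q)$; every irrational in this neighborhood shares the first $N$ partial quotients of $\theta$, so its image lies in $\BaireSpace[\mathsf{cf}(\theta),N]$, i.e. within $2^{-N}$ of $\mathsf{cf}(\theta)$.

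The main obstacle is the cylinder lemma itself; once it is established, both continuities are immediate. The only points requiring genuine care are the bookkeeping with the shifted index ($r_0 = 0$ and $b_n = r_{n+1}$), the verification of monotonicity of the relevant M\"obius map through the sign of its Jacobian, and the observation that the interval endpoints are rational and therefore excluded --- so that the cylinder is a genuinely relatively open set of irrationals rather than a half-open or closed one.
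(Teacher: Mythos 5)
Your proposal is correct, and it relies on the same continued-fraction machinery as the paper, but it is organized around a different key lemma, so a comparison is worthwhile. The paper treats the two directions by separate standard estimates: for continuity of $\mathsf{cf}$ it uses precisely your interval --- its endpoints $[0,b_0,\ldots,b_{N-1}]$ and $[0,b_0,\ldots,b_{N-1}+1]$ are your convergent and your mediant, since $[0,b_0,\ldots,b_{N-1},1] = [0,b_0,\ldots,b_{N-1}+1]$ --- and asserts the cylinder containment with a citation to Hardy--Wright; for continuity of $\mathsf{ir}$ it argues sequentially, noting that sequences agreeing up to index $N$ share their convergents up to that index, and then applies the textbook estimate $\left|\theta - p_{N-1}^\theta/q_{N-1}^\theta\right| < 1/\left(q_{N-1}^\theta\right)^2$ twice through the triangle inequality to get the bound $2/\left(q_{N-1}^\theta\right)^2$. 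You instead prove the cylinder lemma outright, via strict monotonicity of the integer M\"obius map $\xi \mapsto (\xi p_N + p_{N-1})/(\xi q_N + q_{N-1})$ on $(1,\infty)$, and then read off both continuities from that single statement: continuity of $\mathsf{ir}$ because two sequences agreeing to depth $N$ put both irrationals in a common cylinder of length $1/(q_N(q_N + q_{N-1}))$, and continuity of $\mathsf{cf}$ because cylinders are relatively open sets of irrationals. Your route buys self-containedness (the one nontrivial number-theoretic fact is proved rather than cited, including the point that the rational endpoints are excluded so the cylinder is genuinely relatively open) and an explicit modulus for $\mathsf{ir}$ that is uniform over $\BaireSpace$ once combined with the Fibonacci lower bound on the $q_n$; the paper's route buys brevity, since both of its ingredients are standard facts it can quote. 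The only place where your sketch compresses a real step is the claim that ``the irrationals of this interval are precisely the cylinder'': the homeomorphism onto the interval and preservation of irrationality give you a bijection, but identifying the preimage $\xi'$ of an irrational $\theta'$ with a genuine continued-fraction tail (so that $\theta'$ really has expansion beginning $b_0,\ldots,b_{N-1}$) uses the composition rule for expansions, which deserves a sentence in a full write-up.
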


\begin{proof}  
The basic number theory facts used in this proof can be found in \cite{Hardy38}.  Since every irrational in $(0,1)$ has a unique continued fraction expansion of the form given by Expression (\ref{continued-fraction-eq}), and every sequence of positive integers determines the continued fraction expansion of an irrational via the same expression, $\mathsf{cf}$   is a bijection.  

We now show that $\mathsf{cf}  $ is continuous. 

Let $b = (b_n)_{n\in\N} \in \BaireSpace$ and let:
\begin{equation*}
\theta = \lim_{n\rightarrow\infty} \cfrac{1}{b_0 + \cfrac{1}{b_1 + \cfrac{1}{\ddots + \cfrac{1}{b_n}}}} \in (0,1)\setminus\Q\text{.}
\end{equation*}
Let $V = \BaireSpace[b,N]$ for some $N \in\N\setminus\{0\}$. 

Let $\eta \in \mathsf{cf}^{-1}(V)$ and let $(x_n)_{n\in\N} = \mathsf{cf}(\eta)$. Thus, for all $j\in \{0,\ldots,N-1\}$, we have $x_n = b_n$. Define $I_{N,\eta}$ as the open interval with end points:
\begin{equation*}
\cfrac{1}{b_0 + \cfrac{1}{ b_1 + \cfrac{1}{\ddots + \frac{1}{b_{N-1}}}}}\;\text{ and }\;\cfrac{1}{b_0 + \cfrac{1}{ b_1 + \cfrac{1}{\ddots + \frac{1}{b_{N-1}\,+1}}}}\text{,}
\end{equation*}
and let $\Theta_{N,\eta} = I_{N,\eta}\setminus\Q$. 

By construction, $\Theta_{N,\eta}$ is open in the relative topology on $(0,1) \setminus \mathds{Q}$, and since $\eta$ is irrational, we conclude $\eta \in \Theta_{N, \eta} \setminus \mathds{Q}$.  Furthermore, $\mathsf{cf}( \Theta_{N,\eta})\subseteq V$, which concludes the argument since the set of open balls in $\BaireSpace$ is a topological basis for $\BaireSpace$.

Next, we show continuity of $\mathsf{ir}$ by sequential continuity. Let $(b^n)_{n\in\N}$ be a sequence in $\BaireSpace$, where, for all $n\in\N$, we write $b^n = (b^n_m)_{m\in\N}$. Assume $(b^n)_{n\in\N}$ converges to some $b\in \BaireSpace$ for $\mathsf{d}$.

For each $n \in \N$, define: 
\begin{equation*}
 N(b^n , b)=\min \{ m \in \N \cup \{ \infty\} :   b^n_m \neq b_m \} ,
\end{equation*} 
in which $N(b^n , b)=\infty $ if $b^n =b$. Let $\theta = \mathsf{ir}(b) \in (0,1)\setminus\Q$. 
By Definition (\ref{Baire-Space-def}) of our metric $\mathsf{d}$ on $\BaireSpace$, we conclude that:
\begin{equation}\label{Baire-homeo-eq1}
\lim_{ n \to \infty} N(b^n , b) = \infty \text{.}
\end{equation}
We choose, in particular, $M \in \N$ such that for all $n\geq M$, we have $N(b^n,b) \geq 1$.

We also note that if $\theta_n = \mathsf{ir}(b^n)$ for $n\in\N$, then, using Notation (\ref{pq-rel-eq}), we conclude that $p_m^\theta = p_m^{\theta_n}$ and $q_m^\theta = q_m^{\theta_m}$ for all $m \in \{0,\ldots,N(b^n,b)-1\}$. Thus, for all $n\in \N$ with $n\geq M$, standard estimates for continued fraction expansions lead to:

\begin{equation*}
\begin{split}
\vert \mathsf{ir}(b^n)-\theta \vert &= \left\vert \mathsf{ir}(b^n)-p^\theta_{N(b^n , b) -1}/q^\theta_{N(b^n , b) -1} +p^\theta_{N(b^n , b) -1}/q^\theta_{N(b^n , b) -1}-\theta \right\vert\\
& \leq  \left\vert \mathsf{ir}(b^n)-p^\theta_{N(b^n , b) -1}/q^\theta_{N(b^n , b) -1} \right\vert + \left\vert p^\theta_{N(b^n , b) -1}/q^\theta_{N(b^n , b) -1}-\theta\right\vert  \\
& =  \left\vert \theta_n -p^{\theta_n}_{N(b^n , b) -1}/q^{\theta_n}_{N(b^n , b) -1} \right\vert +  \left\vert p^\theta_{N(b^n , b) -1}/q^\theta_{N(b^n , b) -1}-\theta\right\vert \\
& < 1/\left(q^{\theta_n}_{N(b^n , b) -1} \right)^2 + 1/\left(q^\theta_{N(b^n , b) -1}\right)^2 \\
& = 2/\left(q^\theta_{N(b^n , b) -1}\right)^2 \text{.}
\end{split}
\end{equation*}
 Thus by Equation (\ref{Baire-homeo-eq1}), we conclude that $\lim_{n\rightarrow\infty} \mathsf{ir}(b^n) = \theta = \mathsf{ir}(b)$ as desired, and our proof is complete.
\end{proof}

Our main result will be proven in four steps. We begin by observing that the tracial states of $\af{\theta}$ provide a continuous field of states on various finite dimensional algebras.

\begin{lemma}\label{trace-continuity-lemma}
Let $\theta \in (0,1)\setminus\Q$ and $N\in\N$. Let $(p_n^\theta)_{n\in\N}$ and $(q_n^\theta)_{n\in\N}$ be defined from $\mathsf{cf}(\theta)$ using Expression (\ref{pq-rel-eq}). For all $n\in\{0,\ldots,N\}$, the map:
\begin{equation}\label{trace-continuity-lemma-eq1}
\mathrm{s}_n : (x,a) \in \BaireSpace[\mathsf{cf}(\theta),N+1] \times \af{\theta,n} \longmapsto \sigma_{\mathsf{ir}(x)} \left(\indmor{\alpha_{\mathrm{ir}(x)}}{n}(a)\right)
\end{equation}
is well-defined and continuous from $\BaireSpace[\mathsf{cf}(\theta),N+1] \times (\af{\theta,n},\|\cdot\|_{\af{\theta,n}})$ to $\R$.
\end{lemma}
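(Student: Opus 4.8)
The plan is to reduce everything to the explicit trace formula of Lemma (\ref{af-theta-trace-lemma}) together with the continuity of $\mathrm{ir}$ established above, by exploiting that the finite-dimensional integer data attached to the first few continued-fraction coefficients is \emph{constant} on the ball $\BaireSpace[\mathsf{cf}(\theta),N+1]$, so that the only genuinely varying quantity is the irrational $\mathrm{ir}(x)$ itself.

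First I would dispose of well-definedness. If $x\in\BaireSpace[\mathsf{cf}(\theta),N+1]$, then $x$ and $\mathsf{cf}(\theta)$ share their first $N+1$ entries $b_0,\ldots,b_N$, i.e.\ the continued-fraction coefficients $r_1,\ldots,r_{N+1}$ of $\mathrm{ir}(x)$ and $\theta$ coincide (and $r_0=0$ for both). Since the recursion (\ref{pq-rel-eq}) determines $p_m,q_m$ from $r_0,\ldots,r_m$ alone, I obtain $p_m^{\mathrm{ir}(x)}=p_m^\theta$ and $q_m^{\mathrm{ir}(x)}=q_m^\theta$ for every $0\leq m\leq N$. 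In particular, for each $n\in\{0,\ldots,N\}$ the algebras $\af{\mathrm{ir}(x),n}$ and $\af{\theta,n}$ coincide (both equal $\alg{M}(q_n^\theta)\oplus\alg{M}(q_{n-1}^\theta)$ for $n\geq 1$, and $\C$ for $n=0$), so $\indmor{\alpha_{\mathrm{ir}(x)}}{n}(a)$ is meaningful for $a\in\af{\theta,n}$ and $\mathrm{s}_n$ is well-defined on the stated domain.

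For the continuity claim I would simply substitute the formula of Lemma (\ref{af-theta-trace-lemma}): for $n\geq 1$ and $a\oplus b\in\af{\theta,n}$,
\[
\mathrm{s}_n(x,a\oplus b)=t(\mathrm{ir}(x),n)\,\mathrm{tr}_{q_n^\theta}(a)+\bigl(1-t(\mathrm{ir}(x),n)\bigr)\,\mathrm{tr}_{q_{n-1}^\theta}(b),
\]
where, using that $q_n^\theta,q_{n-1}^\theta,p_{n-1}^\theta$ are the constant values carried over the whole ball,
\[
t(\mathrm{ir}(x),n)=(-1)^{n-1}q_n^\theta\bigl(\mathrm{ir}(x)\,q_{n-1}^\theta-p_{n-1}^\theta\bigr).
\]
The coefficient $t(\mathrm{ir}(x),n)$ is thus an affine function of $\mathrm{ir}(x)$ with constant coefficients, and since $\mathrm{ir}$ is continuous (by the homeomorphism proved above), $x\mapsto t(\mathrm{ir}(x),n)$ is continuous on $\BaireSpace[\mathsf{cf}(\theta),N+1]$. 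The maps $a\oplus b\mapsto\mathrm{tr}_{q_n^\theta}(a)$ and $a\oplus b\mapsto\mathrm{tr}_{q_{n-1}^\theta}(b)$ are bounded linear functionals, hence continuous on $\af{\theta,n}$. Each summand above is therefore a product of a continuous function of $x$ alone and a continuous function of $a\oplus b$ alone, and such products --- and their sum --- are jointly continuous on the product space; this yields continuity of $\mathrm{s}_n$. The degenerate case $n=0$ is immediate: there $\af{\theta,0}=\C$ and $\indmor{\alpha_{\mathrm{ir}(x)}}{0}(a)=a\,\unit_{\af{\mathrm{ir}(x)}}$, so $\mathrm{s}_0(x,a)=a$ is constant in $x$ and continuous.

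I do not expect a genuine obstacle here, as the content is essentially bookkeeping. The one point demanding care is the interplay of indices: verifying that agreement of the first $N+1$ Baire-space entries really forces $p_m,q_m$ --- and hence the algebras $\af{\cdot,n}$ and the weights $t(\cdot,n)$ --- to agree for all $n\leq N$, so that Lemma (\ref{af-theta-trace-lemma}) applies with $x$-independent integer data and only $\mathrm{ir}(x)$ varying. Once that is pinned down, continuity follows from the continuity of $\mathrm{ir}$ and the elementary fact that $\phi(x)\psi(v)$ is jointly continuous whenever $\phi$ and $\psi$ are continuous.
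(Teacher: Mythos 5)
Your proposal is correct and follows essentially the same route as the paper: both arguments rest on the observation that $(p_m^{\mathrm{ir}(x)},q_m^{\mathrm{ir}(x)})$ is constant for $x$ in the ball $\BaireSpace[\mathsf{cf}(\theta),N+1]$, on the explicit trace formula of Lemma (\ref{af-theta-trace-lemma}) (so that only the scalar $\mathrm{ir}(x)$ varies, affinely), and on the continuity of $\mathrm{ir}$. The only difference is in the last bookkeeping step: the paper first proves continuity in $x$ for fixed $a$ via the explicit estimate $\left|\mathrm{s}_n(x,a)-\mathrm{s}_n(y,a)\right| \leq 2\left|q_n^\theta q_{n-1}^\theta\right|\left|\mathrm{ir}(x)-\mathrm{ir}(y)\right|\|a\|$ and then upgrades to joint continuity by a triangle inequality using that each $\sigma_{\mathrm{ir}(x)}\circ\indmor{\alpha_{\mathrm{ir}(x)}}{n}$ is a state (hence norm one), whereas you read joint continuity directly off the formula as a finite sum of products $\phi(x)\psi(a)$ of continuous functions of each variable separately --- an equally valid and slightly more structural finish.
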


\begin{proof}
Let $x, y \in \BaireSpace[\mathsf{cf}(\theta),N]$ and set $\eta = \mathsf{ir}(x)$ and $\xi = \mathsf{ir}(y)$. Since $\mathsf{d}$ is an ultrametric on $\BaireSpace$, we note that $\mathsf{d}(x,y) \leq \frac{1}{2^N}$.

We note that the result is trivial for $n=0$ since $\mathsf{s}_0$ is the identity on $\C=\af{x,0}$ for all $x\in\BaireSpace$.

We now use the notation of Expression (\ref{pq-rel-eq}). The key observation from Expression (\ref{pq-rel-eq}) is that the functions:
\begin{equation*}
z \in \BaireSpace[\mathsf{cf}(\theta), N+1] \mapsto \left(q_n^{\mathrm{ir}(z)}, p_n^{\mathrm{ir}(z)}\right)
\end{equation*}
are constant for all $n\in \{0,\ldots,N\}$, equal to $(q_n^\theta, p_n^\theta)$ --- since $\mathsf{d}(x,\mathsf{cf}(\theta))\leq \frac{1}{2^{N+1}}$ implies that the sequences $x$ and $\mathsf{cf}(\theta)$ agree on their first $N$ entries. 

Thus, setting $\B_n = \af{\theta,n}$, we have:
\begin{equation*}
\alg{M}(q_{n}^{x})\oplus\alg{M}(q_{n-1}^{x}) = \B_n
\end{equation*}
for all $n\in\{0,\ldots,N\}$, and the maps defined by Expression (\ref{trace-continuity-lemma-eq1}) are well-defined.

Let now $n\in \{1,\ldots,N\}$ be fixed. Let $a\in \alg{B}_n$ and write $a = a'\oplus a'' \in \alg{M}(q^\theta_{n}) \oplus\alg{M}(q^\theta_{n-1})$. By Lemma (\ref{af-theta-trace-lemma}), we compute:
\begin{equation*}
\begin{split}
\left|\sigma_\xi\circ\indmor{\alpha_\xi}{n}(a) - \sigma_\eta\circ\indmor{\alpha_\eta}{n}(a)\right| &= |(t(\xi,n) - t(\eta,n))(\mathrm{tr}_{q^\theta_{n}}(a') - \mathrm{tr}_{q^\theta_{n-1}}(a''))|\\
&\leq 2|t(\xi,n) - t(\eta,n)| \|a\|_{\B_n}\\
&= 2|q^\theta_{n}(\xi q^\theta_{n-1} - p^\theta_{n-1}) - q^\theta_{n}(\eta q^\theta_{n-1} - p^\theta_{n-1})| \|a\|_{\B_n}\\
&= 2|q^\theta_{n}q^\theta_{n-1}| |\xi-\eta|\|a\|_{\B_n}\\
&= 2|q^\theta_{n}q^\theta_{n-1}| |\mathrm{ir}(y) - \mathrm{ir}(x)|\|a\|_{\B_n}\text{.}
\end{split}
\end{equation*}
As $n < N$ is fixed, and $\mathrm{ir}$ is a homeomorphism, we conclude that if $(y_m)_{m\in\N}$ is a sequence in $\BaireSpace[\theta,N+1]$ converging to $x$ then:
\begin{equation*}
\lim_{m\rightarrow\infty}  \left|\sigma_{\mathsf{ir}(y_m)}\circ\indmor{\alpha_{\mathrm{ir}(y_m)}}{n}(a) - \sigma_\eta\circ\indmor{\alpha_\eta}{n}(a)\right| = 0 \text{.}
\end{equation*}
Thus we have established that the partial function $s_n(\cdot,a)$ are continuous for all $a\in\B_n$.

We now prove the joint continuity of our maps. Let $a,b \in \B_n$ and $\eta,\xi$ as above. Then:
\begin{multline*}
\left\vert \sigma_{\eta} \left(\indmor{\alpha_\eta}{n}(a)\right)  - \sigma_{\xi} \left(\indmor{\alpha_\xi}{n}(b)\right) \right\vert \\
\begin{split}
&= \left\vert \sigma_{\eta} \left(\indmor{\alpha_\eta}{n}(a)\right) -\sigma_\eta \left(\indmor{\alpha_\eta}{n}(b)\right)+\sigma_{\eta} \left(\indmor{\alpha_\eta}{n}(b)\right) - \sigma_\xi \left(\indmor{\alpha_\xi}{n}(b)\right) \right\vert \\
& \leq  \left\vert \sigma_{\eta} \left(\indmor{\alpha_\eta}{n}(a)\right) -\sigma_\eta \left(\indmor{\alpha_\eta}{n}(b)\right)\right\vert \\
&\quad + \left\vert \sigma_{\eta} \left(\indmor{\alpha_\eta}{n}(b)\right) - \sigma_\xi \left(\indmor{\alpha_\xi}{n}(b)\right) \right\vert \\
&  \leq \Vert a -b \Vert_{\A_n} + \left\vert \sigma_{\eta} \left(\indmor{\alpha_\eta}{n}(b)\right) - \sigma_\xi \left(\indmor{\alpha_\xi}{n}(b)\right) \right\vert\text{.}
\end{split}
\end{multline*}
It follows immediately that the map $\mathsf{s}_n$ defined by Expression (\ref{trace-continuity-lemma-eq1}) is continuous as desired.
\end{proof}

Our second step is to prove that, thanks to Lemma (\ref{trace-continuity-lemma}), the Lip-norms induced from $\af{\theta}$ on their finite dimensional C*-subalgebras form a continuous field of Lip-norms \cite{Rieffel00}. Moreover, we obtain a joint continuity result for these Lip-norms, which are thus in particular continuous rather than only lower semi-continuous.

\begin{lemma}\label{lip-norm-continuity-lemma}
Let $\theta \in (0,1)\setminus\Q$ and $N\in\N$. Let $(p_n^\theta)_{n\in\N}$ and $(q_n^\theta)_{n\in\N}$ be defined from $\mathsf{cf}(\theta)$ using Expression (\ref{pq-rel-eq}). For all $n\in\{0,\ldots,N\}$ and $k\in (0,\infty)$, the map:
\begin{equation}\label{lip-norm-continuity-lemma-eq1}
\mathsf{l}_n : (x,a) \in \BaireSpace[\mathsf{cf}(\theta),N+1] \times \af{\theta,n} \longmapsto\Lip^k_{\mathsf{ir}(x)} \left(\indmor{\alpha_{\mathrm{ir}(x)}}{n}(a)\right)
\end{equation}
defined using Notation (\ref{af-theta-lip-norm-notation}), is well-defined and continuous from $\BaireSpace[\mathsf{cf}(\theta),N+1]  \times (\B_n,\|\cdot\|_{\B_n})$ to $\R$.
\end{lemma}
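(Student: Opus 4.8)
The plan is to reduce the continuity of the Lip-norm field to the continuity of the trace field already established in Lemma (\ref{trace-continuity-lemma}), after first collapsing the defining supremum to a finite maximum. I would begin by recording the structural facts valid on the ball $\BaireSpace[\mathsf{cf}(\theta),N+1]$: as observed in the proof of Lemma (\ref{trace-continuity-lemma}), for every $x$ in this ball the pairs $(q_j^{\mathrm{ir}(x)},p_j^{\mathrm{ir}(x)})$ coincide with $(q_j^\theta,p_j^\theta)$ for $j\in\{0,\ldots,N\}$, whence the finite dimensional algebras $\af{\mathrm{ir}(x),j}$ and the connecting *-morphisms $\alpha_{\mathrm{ir}(x),j}$ agree with those attached to $\theta$ for $j\le N$. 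This makes $\mathsf{l}_n$ well-defined on the fixed algebra $\B_n=\af{\theta,n}$ and, crucially, shows that the weights $\dim\left(\af{\mathrm{ir}(x),j}\right)^{-k}$ entering the definition of $\Lip^k_{\mathrm{ir}(x)}$ are constant in $x$ for $j\le N$. The case $n=0$ is trivial, since $\indmor{\alpha_{\mathrm{ir}(x)}}{0}(a)$ is a scalar multiple of the unit and so $\mathsf{l}_0\equiv 0$.

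Next I would collapse the supremum. Since $\indmor{\alpha_{\mathrm{ir}(x)}}{n}(a)$ already lies in $\indmor{\alpha_{\mathrm{ir}(x)}}{j}(\af{\mathrm{ir}(x),j})$ for every $j\ge n$, the corresponding conditional expectation fixes it and those terms vanish, exactly as in Equation (\ref{AF-lip-norms-thm-eq1}). Hence, for $n\ge 1$,
\[
\mathsf{l}_n(x,a)=\max_{0\le j\le n-1}\dim\left(\af{\theta,j}\right)^{k}\left\|\indmor{\alpha_{\mathrm{ir}(x)}}{n}(a)-\CondExp{\indmor{\alpha_{\mathrm{ir}(x)}}{n}(a)}{\indmor{\alpha_{\mathrm{ir}(x)}}{j}(\af{\theta,j})}\right\|_{\af{\mathrm{ir}(x)}},
\]
a maximum of finitely many functions; thus $\mathsf{l}_n$ will be continuous once each term is.

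The heart of the argument is the continuity of each term, which I would prove by transporting everything back into the single finite dimensional algebra $\B_n$. Fix $j<n$ and the matrix units $\{e_l\}$ of $\af{\theta,j}$, and write $\indmor{\alpha_{\mathrm{ir}(x)}}{j}(e_l)=\indmor{\alpha_{\mathrm{ir}(x)}}{n}\left(\alpha_{j,n-1}(e_l)\right)$ using the composite connecting map $\alpha_{j,n-1}$, which is independent of $x$ on the ball. Then Expression (\ref{cond-exp-eq}) identifies the conditional expectation with $\indmor{\alpha_{\mathrm{ir}(x)}}{n}(c_j(x,a))$, where
\[
c_j(x,a)=\sum_l \frac{\sigma_{\mathrm{ir}(x)}\left(\indmor{\alpha_{\mathrm{ir}(x)}}{n}(\alpha_{j,n-1}(e_l)^\ast a)\right)}{\sigma_{\mathrm{ir}(x)}\left(\indmor{\alpha_{\mathrm{ir}(x)}}{n}(\alpha_{j,n-1}(e_l^\ast e_l))\right)}\,\alpha_{j,n-1}(e_l)\in\B_n .
\]
Because $\indmor{\alpha_{\mathrm{ir}(x)}}{n}$ is an isometric embedding, the $j$-th norm above equals $\|a-c_j(x,a)\|_{\B_n}$, which eliminates the $x$-dependence of the ambient norm. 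Each numerator and denominator is precisely $\mathsf{s}_n$ evaluated at an argument depending continuously (indeed affinely) on $(x,a)$, so Lemma (\ref{trace-continuity-lemma}) yields their joint continuity, while the denominators are the nonzero quantities computed in Lemma (\ref{mu-norm-lemma}) — of the form $(-1)^{j-1}\left(\mathrm{ir}(x)q_{j-1}^\theta-p_{j-1}^\theta\right)$ and $(-1)^{j}\left(\mathrm{ir}(x)q_{j}^\theta-p_{j}^\theta\right)$ — which never vanish since $\mathrm{ir}(x)$ is irrational. Thus $c_j$ is a continuous $\B_n$-valued map, and as $\B_n$ is finite dimensional the map $(x,a)\mapsto\|a-c_j(x,a)\|_{\B_n}$ is continuous.

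The main obstacle I anticipate is exactly the bookkeeping in this last step: the conditional expectation naturally lives in the moving algebra $\af{\mathrm{ir}(x)}$, and one must carefully pull it back to the single fixed finite dimensional space $\B_n$ through the isometric maps $\indmor{\alpha_{\mathrm{ir}(x)}}{n}$, so that the only surviving $x$-dependence sits inside the scalar trace coefficients, where Lemma (\ref{trace-continuity-lemma}) supplies continuity and Lemma (\ref{mu-norm-lemma}) supplies the pointwise non-vanishing of denominators needed to keep the quotients continuous.
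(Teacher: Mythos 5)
Your proposal is correct and takes essentially the same approach as the paper's proof: both reduce the supremum to a finite maximum over $j\in\{0,\ldots,n-1\}$ (using that the conditional expectations onto the larger algebras fix $\indmor{\alpha_{\mathrm{ir}(x)}}{n}(a)$), pull each conditional-expectation norm back into the fixed finite dimensional algebra $\B_n$ via Expression (\ref{cond-exp-eq}) and the computation of Equation (\ref{induct-trace}), and conclude using the joint continuity of the trace field from Lemma (\ref{trace-continuity-lemma}) together with the explicit nonvanishing denominators from Lemma (\ref{mu-norm-lemma}). The only organizational difference is that you package the coefficients into a continuous $\B_n$-valued map $c_j(x,a)$ and invoke continuity of the norm, whereas the paper carries out the corresponding triangle-inequality estimate directly; this is the same argument.
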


\begin{proof} 
We note that the proof of Lemma (\ref{trace-continuity-lemma}) also establishes, by a similar argument, that the maps $\mathsf{l}_n$ are well-defined for all $n\in \{0,\ldots,N\}$. We also note that $\mathsf{l}_0$ is constantly $0$, and thus the result is trivial for $n=0$.

Fix $n\in \{1,\ldots,N\}$. Let $x,y \in \BaireSpace[\mathsf{cf}(\theta),N+1]$ and write $\eta = \mathrm{ir}(x)$ and $\xi = \mathrm{ir}(y)$. As within the proof of Lemma (\ref{trace-continuity-lemma}), we note that for all $M \in \{0, \ldots, n\}$, we have that  and set $q_M=q_M^\theta = q_M^\eta = q_M^\xi$ and similarly, $p_M=p_M^\theta = p_M^\eta = p_M^\xi$ (using the notations of Expression (\ref{pq-rel-eq}) ). Furthermore, for all $M \in \{0,\ldots , n\}$, we set $(\B_M , \alpha_M)= (\af{\theta,M}, \alpha_{\theta,M}) = (\af{\eta,M}, \alpha_{\eta, M} )= (\af{\xi,M}, \alpha_{\xi,M})$ .  Note further that $\alpha_{M,n-1}=\alpha_{n-1} \circ \cdots \circ \alpha_M = \alpha_{\theta,M,n-1}= \alpha_{\eta,M,n-1}=\alpha_{\xi,M,n-1}$ for all $M \in \{0,\ldots, n-1\}$.

Fix $M \in \{0,\ldots, n-1\}$, we employ the notations of Section (\ref{exp-formula-sec}) and thus, we have a set $\{e_{1,j,m} \in \B_M : 1\leq j,m \leq q_{M} \}$ of matrix units of $\alg{M}(q_{M})\subseteq \B_M$ and a set $\{e_{2,j,m} \in \B_M : 1\leq j,m \leq q_{M-1}\}$ of matrix units for $\alg{M}(q_{M-1}) \subseteq \B_M$. 

To lighten our notations in this proof, let:
\begin{equation*}
\begin{split}
I_1 & =\{ (1,j,m)\in \N^3 : 1\leq j,m \leq q_{M} \}\text{, } I_2 =  \{ (2,j,m)\in \N^3 : 1\leq j,m \leq q_{M-1} \}\\
& \text{and} \  I=I_1\cup I_2\text{.}
\end{split}
\end{equation*}
Let $ a\in \B_n$. By Expression (\ref{cond-exp-eq}) and the same argument provided by Equation (\ref{induct-trace}) in the proof of Theorem (\ref{uhf-thm}), we conclude that: 
\begin{equation*}
\left\| \indmor{\alpha_\eta}{n}(a) - \CondExp{\indmor{\alpha_\eta}{n}(a)}{\indmor{\alpha_\eta}{M}(\B_M)}\right\|_{\af{\eta}} =\left\| a - \sum_{j \in I} \frac{\sigma_\eta\left(\indmor{\alpha_\eta}{n}(\alpha_{M,n-1}( e_j^\ast)a)\right)}{\sigma_\eta\left(\indmor{\alpha_\eta}{M}(e_j^\ast e_j)\right)} \alpha_{M,n-1}(e_j) \right\|_{\B_n}
\end{equation*}
and
\begin{equation*}
\left\| \indmor{\alpha_\xi}{n}(a) - \CondExp{\indmor{\alpha_\xi}{n}(a)}{\indmor{\alpha_\xi}{M}(\B_M)}\right\|_{\af{\xi}} =\left\| a - \sum_{j \in I} \frac{\sigma_\xi\left(\indmor{\alpha_\xi}{n}(\alpha_{M,n-1}( e_j^\ast)a)\right)}{\sigma_\xi\left(\indmor{\alpha_\xi}{M}(e_j^\ast e_j)\right)} \alpha_{M,n-1}(e_j) \right\|_{\B_n}.
\end{equation*}
Next, let  $a, b \in \B_n$, we have:
\begin{align*}
&\quad \left| \left\| \indmor{\alpha_\eta}{n}(a) - \CondExp{\indmor{\alpha_\eta}{n}(a)}{\indmor{\alpha_\eta}{M}(\B_M)}\right\|_{\af{\eta}}  -\left\| \indmor{\alpha_\xi}{n}(b) - \CondExp{\indmor{\alpha_\xi}{n}(b)}{\indmor{\alpha_\xi}{M}(\B_M)}\right\|_{\af{\xi}} \right| \\
&\leq \Vast| \left\| a - \sum_{j \in I} \frac{\sigma_\eta\left(\indmor{\alpha_\eta}{n}(\alpha_{M,n-1}( e_j^\ast)a)\right)}{\sigma_\eta\left(\indmor{\alpha_\eta}{M}(e_j^\ast e_j)\right)} \alpha_{M,n-1}(e_j) \right\|_{\B_n} \\
& \quad - \left\| b - \sum_{j \in I} \frac{\sigma_\xi\left(\indmor{\alpha_\xi}{n}(\alpha_{M,n-1}( e_j^\ast)b)\right)}{\sigma_\xi\left(\indmor{\alpha_\xi}{M}(e_j^\ast e_j)\right)} \alpha_{M,n-1}(e_j) \right\|_{\B_n} \Vast| \\
& \leq \|a-b\|_{\B_n} + \left\|\sum_{j \in I_1} \left(\frac{\sigma_\eta\left(\indmor{\alpha_\eta}{n}(\alpha_{M,n-1}( e_j^\ast)a)\right)}{q_{M-1}^\eta \eta - p_{M-1}^\eta} - \frac{\sigma_\xi\left(\indmor{\alpha_\xi}{n}(\alpha_{M,n-1}( e_j^\ast)b)\right)}{q_{M-1}^\xi \xi - p_{M-1}^\xi}\right)e_j \right\|_{\M(q_{M})}\\
&\quad + \left\|\sum_{j \in I_2} \left(\frac{\sigma_\eta\left(\indmor{\alpha_\eta}{n}(\alpha_{M,n-1}( e_j^\ast)a)\right)}{q_{M}^\eta \eta - p_{M}^\eta} - \frac{\sigma_\xi\left(\indmor{\alpha_\xi}{n}(\alpha_{M,n-1}( e_j^\ast)b)\right)}{q_{M}^\xi \xi - p_{M}^\xi}\right)e_j \right\|_{\M(q_{M-1})}\\
& = \|a-b\|_{\B_n} + \left\|\sum_{j \in I_1} \left(\frac{\mathrm{s}_n\left(x,\alpha_{M,n-1}( e_j^\ast)a\right)}{q_{M-1}\mathrm{ir}(x) - p_{M-1}} - \frac{\mathrm{s}_n\left(y,\alpha_{M,n-1}( e_j^\ast)b\right)}{q_{M-1}\mathrm{ir}(y) - p_{M-1}}\right)e_j \right\|_{\M(q_M)}\\
&\quad + \left\|\sum_{j \in I_2} \left(\frac{\mathrm{s}_n\left(x,\alpha_{M,n-1}( e_j^\ast)a\right)}{q_{M}\mathrm{ir}(x) - p_{M}} - \frac{\mathrm{s}_n\left(y,\alpha_{M,n-1}( e_j^\ast)b\right)}{q_{M}\mathrm{ir}(y) - p_{M}}\right)e_j \right\|_{\M(q_{M-1})},\\
\end{align*}
where we used Lemma (\ref{mu-norm-lemma}) in the second inequality above, and $\mathrm{s}_n $ is defined by Expression (\ref{trace-continuity-lemma-eq1}).  Now, since $\mathrm{ir}$ is a homeomorphism from $\BaireSpace$ to the irrationals in $(0,1)$, and the map $\mathrm{s}_n $ is continuous by Lemma (\ref{trace-continuity-lemma}), we conclude that as $I=I_1 \cup I_2$ is finite: 
\begin{equation}\label{lip-norm-continuity-lemma-eq2}
(x,a) \in \BaireSpace[\mathsf{cf}(\theta),N+1] \times \B_n \longmapsto \frac{1}{\beta(M)} \left\| \indmor{\alpha_{\mathrm{ir}(x)}}{n}(a) - \CondExp{\indmor{\alpha_{\mathrm{ir}(x)}}{n}(a)}{\indmor{\alpha_{\mathrm{ir}(x)}}{M}(\B_M)} \right\|_{\af{\mathrm{ir}(x)}}
\end{equation}
is continuous, where $\beta(M) = \frac{1}{\left((q_M)^2 + (q_{M-1})^2 \right)^k}$.

Last, we note that since for all $j \geq n$ we have:
\begin{equation*}
\CondExp{\indmor{\alpha_{\mathrm{ir}(x)}}{n}(a)}{\indmor{\alpha_{\mathrm{ir}(x)}}{j}(\af{\theta,j})} = \indmor{\alpha_{\mathrm{ir}(x)}}{n}(a)
\end{equation*}
by definition of conditional expectation, and therefore, the function $\mathsf{l}_n$ is the maximum of the functions given in Expression (\ref{lip-norm-continuity-lemma-eq2}) with $M$ ranging over $\{0,\ldots,n-1\}$.

As the maximum of finitely many continuous functions is continuous, our lemma is proven.
\end{proof}

Our third step establishes a bound for the propinquity between finite dimensional quantum compact metric spaces which constitute the building blocks of the C*-algebras $\af{\theta}$.

\begin{lemma}\label{af-theta-fd-lemma} 
Let $\theta \in (0,1)\setminus\Q$ and $N\in\N$. Let $(p_n^\theta)_{n\in\N}$ and $(q_n^\theta)_{n\in\N}$ be defined from $\mathsf{cf}(\theta)$ using Expression (\ref{pq-rel-eq}). For all $n\in\{0,\ldots,N\}$ and $k\in (0,\infty)$, setting $\B_n = \af{\theta,n}$, the map:
\begin{equation}\label{af-theta-fd-lemma-eq1}
\mathsf{q}_n : x \in \BaireSpace[\mathsf{cf}(\theta),N+1] \longmapsto \left(\B_n, \Lip^k_{\mathsf{ir}(x)} \circ\indmor{\alpha_{\mathrm{ir}(x)}}{n}\right)
\end{equation}
defined using Notation (\ref{af-theta-lip-norm-notation}), is well-defined and continuous from $(\BaireSpace,\mathsf{d})$ to the class of $(2,0)$-quasi-Leibniz quantum compact metric spaces metrized by the quantum propinquity $\qpropinquity{}$.
\end{lemma}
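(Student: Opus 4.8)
The strategy is to observe that every space $\mathsf{q}_n(x)$ is carried by the \emph{same} finite-dimensional C*-algebra $\B_n$, so that any two of them may be compared through the trivial identity bridge; this reduces the claim to the joint continuity of the Lip-norms obtained in Lemma (\ref{lip-norm-continuity-lemma}). For well-definedness, fix $x \in \BaireSpace[\mathsf{cf}(\theta),N+1]$. As in the proofs of Lemmas (\ref{trace-continuity-lemma}) and (\ref{lip-norm-continuity-lemma}), the continued fraction expansion of $\mathsf{ir}(x)$ agrees with that of $\theta$ far enough that $\af{\mathsf{ir}(x),m} = \af{\theta,m} = \B_m$ and $\alpha_{\mathsf{ir}(x),m} = \alpha_{\theta,m}$ for all $m \in \{0,\ldots,N\}$; hence, by Theorem (\ref{AF-lip-norms-thm}), the seminorm $L_x := \Lip^k_{\mathsf{ir}(x)}\circ\indmor{\alpha_{\mathrm{ir}(x)}}{n}$ is a lower semi-continuous $(2,0)$-quasi-Leibniz Lip-norm on $\B_n$, so $\mathsf{q}_n(x) = (\B_n, L_x)$ is a $(2,0)$-quasi-Leibniz quantum compact metric space. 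Note that $L_x = \mathsf{l}_n(x,\cdot)$, where $\mathsf{l}_n$ is the jointly continuous map of Lemma (\ref{lip-norm-continuity-lemma}).

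Now fix $x$ and let $y \in \BaireSpace[\mathsf{cf}(\theta),N+1]$. Consider the bridge $\gamma = (\B_n, \mathrm{id}_{\B_n}, \mathrm{id}_{\B_n}, \unit_{\B_n})$ from $(\B_n,L_x)$ to $(\B_n,L_y)$ in the sense of Definition (\ref{bridge-def}). Since its pivot is $\unit_{\B_n}$, its $1$-level set is all of $\StateSpace(\B_n)$, whence the height of $\gamma$ is $0$; moreover its bridge seminorm is simply $\bridgenorm{\gamma}{a,b} = \|a-b\|_{\B_n}$. By Theorem-Definition (\ref{def-thm}) we thus obtain
\[
\qpropinquity{}(\mathsf{q}_n(x),\mathsf{q}_n(y)) \leq \bridgelength{\gamma}{L_x,L_y} = \bridgereach{\gamma}{L_x,L_y} = \Haus{\|\cdot\|_{\B_n}}\!\left(\{a\in\sa{\B_n}:L_x(a)\leq 1\}, \{b\in\sa{\B_n}:L_y(b)\leq 1\}\right),
\]
so it suffices to prove that this Hausdorff distance tends to $0$ as $y\to x$.

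The main obstacle is that these Lip-balls are unbounded: each is invariant under adding multiples of $\unit_{\B_n}$, on which every $L_x$ vanishes. To remove this degeneracy, fix any state $\rho$ of $\B_n$ and set $H = \{a\in\sa{\B_n}:\rho(a)=0\}$, so that $\sa{\B_n} = H \oplus \R\unit_{\B_n}$. As $L_x$ vanishes exactly on $\R\unit_{\B_n}$, its restriction to $H$ is a genuine norm on the finite-dimensional space $H$, and the Lip-ball decomposes as $\{a : L_x(a)\leq 1\} = S_x + \R\unit_{\B_n}$ where $S_x := \{a\in H : L_x(a)\leq 1\}$ is compact. Matching the scalar components of competing elements then yields
\[
\Haus{\|\cdot\|_{\B_n}}\!\left(\{a : L_x(a)\leq 1\},\{b : L_y(b)\leq 1\}\right) \leq \Haus{\|\cdot\|_{\B_n}}(S_x, S_y).
\]

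It remains to compare the unit balls $S_x, S_y$ of two norms on the fixed finite-dimensional space $H$. Let $\Sigma$ be the (compact) $\|\cdot\|_{\B_n}$-unit sphere of $H$ and set $m = \min_{a\in\Sigma} L_x(a) > 0$. By the joint continuity of $\mathsf{l}_n$ (Lemma (\ref{lip-norm-continuity-lemma})) together with the compactness of $\Sigma$, the restrictions $L_y|_\Sigma$ converge uniformly to $L_x|_\Sigma$ as $y\to x$; writing $\delta(y) = \max_{a\in\Sigma}|L_y(a)-L_x(a)|$, we have $\delta(y)\to 0$. A routine rescaling estimate — given $a\in S_x$, the element $a/(1+\delta(y)/m)$ lies in $S_y$ and is within $\delta(y)/m^2$ of $a$, and symmetrically — bounds $\Haus{\|\cdot\|_{\B_n}}(S_x,S_y)$ by a quantity tending to $0$ with $\delta(y)$. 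Combining the three displays shows $\qpropinquity{}(\mathsf{q}_n(x),\mathsf{q}_n(y)) \to 0$ as $y\to x$, which is the asserted continuity of $\mathsf{q}_n$.
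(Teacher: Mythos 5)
Your proposal is correct and takes essentially the same route as the paper's own proof: the identity bridge with pivot $\unit_{\B_n}$ reduces the propinquity estimate to the reach, i.e.\ to the Hausdorff distance in $\|\cdot\|_{\B_n}$ between the Lip-balls, which is then controlled using a complement of $\R\unit_{\B_n}$ in $\sa{\B_n}$, compactness of its unit sphere, the joint continuity of $\mathsf{l}_n$ from Lemma (\ref{lip-norm-continuity-lemma}), and a rescaling argument. The only cosmetic differences are that you rescale by the uniform factor $1/(1+\delta(y)/m)$ and absorb the scalar direction via the inequality $\Haus{\|\cdot\|_{\B_n}}\left(\{L_x\leq 1\},\{L_y\leq 1\}\right) \leq \Haus{\|\cdot\|_{\B_n}}(S_x,S_y)$, whereas the paper rescales each element by the pointwise ratio $\mathsf{l}_n(x,a)/\mathsf{l}_n(y_m,a)$ and treats the scalar part through a short case analysis.
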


\begin{proof}
The statement is obvious for $n = 0$.

Let $n \in \{1,\ldots,N\}$. Let $\alg{W}$ be any complementary subspace of $\R\unit_\A$ in $\sa{\B_n}$ --- which exists since $\sa{\B_n}$ is finite dimensional. We shall denote by $\alg{S}$ the unit sphere $\{ a\in \alg{W} : \|a\|_{\B_n} = 1 \}$ in $\alg{W}$. Note that since $\alg{W}$ is finite dimensional, $\alg{S}$ is a compact set.

We let $x\in \BaireSpace[\mathsf{cf}(\theta),N+1]$. Let $(y_m)_{m\in\N}$ be a sequence in $\BaireSpace[\mathsf{cf}(\theta),N+1]$ converging to $x$. Let:
\begin{equation*}
S = \left\{ x, y_m : m\in\N \right\} \times \alg{S}
\end{equation*}
which is a compact subset of $\BaireSpace\times \alg{W}$. Since the function:
\begin{equation*}
\mathsf{l}_n : (u,a) \in \BaireSpace[\mathsf{cf}(\theta),N] \times \B_n \longmapsto\Lip^k_{\mathsf{ir}(u)} \left(\indmor{\alpha_{\mathrm{ir}(u)}}{n}(a)\right)
\end{equation*}
is continuous by Lemma (\ref{lip-norm-continuity-lemma}),  $\mathsf{l}_n$ reaches a minimum on $S$: thus there exists $(z,c) \in S$ such that $\min_{S}\mathsf{l}_n = \mathsf{l}_n(z,c)$. In particular, since Lip-norms are zero only on the scalars, we have $\mathsf{l}_n(z,c) > 0$ as $\|c\|_{\alg{W}} = 1$ yet the only scalar multiple of $\unit_{\B_n}$ in $\alg{W}$ is $0$. We denote $m_S = \mathsf{l}_n(z,c) > 0$ in the rest of this proof.

Moreover, $\mathsf{l}_n$ is continuous on the compact $S$ so it is uniformly continuous.

Let $\varepsilon > 0$. As $\mathsf{l}_n$ is uniformly continuous on $S$, there exists $M \in \N$ such that for all $m\geq M$ and for all $a \in \alg{S}$ we have:
\begin{equation*}
|\mathsf{l}_n(y_m, a) - \mathsf{l}_n(x,a)| \leq m_S^2 \varepsilon\text{.}
\end{equation*}

We then have, for all $a\in\alg{S}$ and $m\geq M$:

\begin{equation*}
\begin{split}\left\| a - \frac{\mathsf{l}_n(y_m,a)}{\mathsf{l}_n(x,a)} a \right\|_{\B_n} &= \frac{|\mathsf{l}_n(y_m,a)-\mathsf{l}_n(x,a)|}{\mathsf{l}_n(x,a)} \|a\|_{\B_n} \\
&\leq \frac{\varepsilon m_S^2}{m_S} \leq m_S \varepsilon \text{.}
\end{split}
\end{equation*}
Similarly:
\begin{equation}\label{af-theta-fd-lemma-eq2}
\left\| a - \frac{\mathsf{l}_n(x,a)}{\mathsf{l}_n(y_m,a)} a \right\|_{\B_n} \leq m_S \varepsilon \text{.}
\end{equation}

We are now ready to provide an estimate for the quantum propinquity. Let $m\geq M$ be fixed. Writing $\mathrm{id}$ for the identity of $\B_n$, the quadruple:
\begin{equation*}
\gamma = \left(\B_n, \unit_{\B_n}, \mathrm{id}, \mathrm{id}\right)
\end{equation*}
is a bridge from $\left(\B_n,\Lip_{\mathrm{ir}(y_m)}^k\circ\indmor{\alpha_{\mathrm{ir}(y_m)}}{n}\right)$ to $\left(\B_n,\Lip_{\mathrm{ir}(x)}^k\circ\indmor{\alpha_{\mathrm{ir}(x)}}{n}\right)$.

As the pivot of $\gamma$ is the unit, the height of $\gamma$ is null. We are left to compute the reach of $\gamma$.

Let $a\in \B_n$. We proceed in four steps.

\setcounter{step}{0}
\begin{step}
Assume that $a \in \R\unit_{\B_n}$. 
\end{step}
We then have that $\mathsf{l}_n(y_m,a) = 0$ as well, and that $\|a - a\|_{\B_n} = 0$.

\begin{step}
Assume that $a \in\alg{S}$.
\end{step}
We note again that $\mathsf{l}_n(x,a) \geq m_S > 0$. By Inequality (\ref{af-theta-fd-lemma-eq2}), we note that:
\begin{equation*}
\left\|a - \frac{\mathsf{l}_n(x,a)}{\mathsf{l}_n(y_m,a)} a\right\|_{\B_n}\leq \varepsilon m_S \leq \varepsilon \mathsf{l}_n(x,a) \text{,}
\end{equation*}
while $\mathsf{l}_n\left(y_m, \frac{\mathsf{l}_n(x,a)}{\mathsf{l}_n(y_m,a)} a \right) = \mathsf{l}_n(x,a)$.
 
\begin{step}
Assume that $a = b + t\unit_{\B_n}$ with $b\in\alg{S}$.
\end{step}

Note that $\mathsf{l}_n(x,b) = \mathsf{l}_n(x,a)$. Therefore, let $b' \in \sa{\B_n}$ be constructed as in Step 2. We then check easily that:
\begin{equation*}
\|a - (b' + t\unit_{\B_n})\|_{\B_n} = \|b - b'\|_{\alg{B}_n} \leq \varepsilon \mathsf{l}_n(x,a)
\end{equation*}
while $\mathsf{l}_n(y_m, b'+t\unit_{\B_n}) = \mathsf{l}_n(y_m,b') \leq \mathsf{l}_n(x,a)$.
\begin{step}
Let $a\in \sa{\B_n}$.
\end{step}

By definition of $\alg{S}$ there exists $r, t \in \R$ such that $a = rb + t\unit_{\B_N}$ with $b\in \alg{S}$. Let $b' \in\sa{\A}$ be constructed from $b$ as in Step 3. Then set $a' = rb'$. By Step 3, we have $\mathsf{l}_n(y_m,b') \leq\mathrm{l}_n(x,a')$ and $\|a'-b'\|_{\B_n} \leq\varepsilon \mathsf{l}_n(x,a')$.

Thus by homogeneity, we conclude that:
\begin{equation}\label{af-theta-fd-lemma-eq3}
\forall a\in \sa{\B_n}\quad\exists a' \in\sa{\B_n}\quad \|a-a'\|_{\B_n}\leq\varepsilon \mathsf{l}_n(x,a)\text{ and }\mathsf{l}_n(y_m,a')\leq\mathsf{l}_n(x,a)\text{.}
\end{equation}

By symmetry in the roles of $x$ and $y_m$ we can conclude as well that:
\begin{equation}\label{af-theta-fd-lemma-eq4}
\forall a\in \sa{\B_n}\quad\exists a' \in\sa{\B_n}\quad \|a-a'\|_{\B_n}\leq\varepsilon \mathsf{l}_n(y_m,a)\text{ and }\mathsf{l}_n(x,a')\leq\mathsf{l}_n(y_m,a)\text{.} 
\end{equation}

Now, Expressions (\ref{af-theta-fd-lemma-eq3}) and (\ref{af-theta-fd-lemma-eq4}) together imply that the reach, and hence the length of the bridge $\gamma$ is no more than $\varepsilon$.

Therefore, for all $m\geq M$, we have:
\begin{equation*}
\qpropinquity{}\left((\B_n,\mathsf{l}_n(x,\cdot)), (\B_n, \mathsf{l}_n(y_m,\cdot))\right) \leq \varepsilon
\end{equation*}
which concludes our proof.
\end{proof}

We are now able to establish the main result of this section.

\begin{theorem}\label{af-theta-thm}
For all $k\in (0,\infty)$ and using Notations (\ref{af-theta-notation}) and (\ref{af-theta-lip-norm-notation}), the function:
\begin{equation*}
\theta \in (0,1)\setminus \Q \longmapsto \left(\af{\theta}, \Lip_\theta^k\right) \in \mathcal{AF}^k 
\end{equation*}
is continuous from $(0,1)\setminus\Q$, with its topology as a subset of $\R$, to the class of $(2,0)$-quasi-Leibniz quantum compact metric spaces metrized by the quantum propinquity $\qpropinquity{}$.
\end{theorem}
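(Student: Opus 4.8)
The plan is to derive this global continuity from the three preceding lemmas through a single use of the triangle inequality for $\qpropinquity{}$, interposing a common finite-dimensional approximant. Fix $\theta \in (0,1)\setminus\Q$ and $\varepsilon > 0$. For $\theta'$ near $\theta$ I would route through the building block $\af{\theta,N}$, writing
\[
\qpropinquity{}\left(\left(\af{\theta},\Lip_\theta^k\right),\left(\af{\theta'},\Lip_{\theta'}^k\right)\right) \leq A + B + C \text{,}
\]
where $A$ and $C$ are the propinquities from each Effros-Shen algebra to its $N$-th approximant, and $B$ is the propinquity between the two approximants.

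First I would bound $A$ and $C$ by Theorem (\ref{AF-lip-norms-thm}). Applied with $\beta(n) = \dim(\af{\theta,n})^{-k} = \left((q_n^\theta)^2 + (q_{n-1}^\theta)^2\right)^{-k}$, it gives
\[
A = \qpropinquity{}\left(\left(\af{\theta},\Lip_\theta^k\right),\left(\af{\theta,N},\Lip_\theta^k\circ\indmor{\alpha_\theta}{N}\right)\right) \leq \frac{1}{\left((q_N^\theta)^2 + (q_{N-1}^\theta)^2\right)^k}\text{.}
\]
Since the continued-fraction denominators $q_N^\theta$ tend to infinity, I can fix $N$ large enough that this bound is below $\frac{\varepsilon}{3}$; the identical bound will govern $C$ once the approximant of $\theta'$ is arranged to have the same dimensions.

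Next I would invoke the homeomorphism $\mathsf{cf} : (0,1)\setminus\Q \to \BaireSpace$ together with Lemma (\ref{af-theta-fd-lemma}). As $q_n^{(\cdot)}$ and $p_n^{(\cdot)}$ depend only on the first $N$ continued-fraction terms, demanding that $\theta'$ lie in the ball $\BaireSpace[\mathsf{cf}(\theta),N+1]$---equivalently, that $\mathsf{cf}(\theta)$ and $\mathsf{cf}(\theta')$ share their first $N$ entries---forces $\af{\theta',N} = \af{\theta,N}$ as the very same matrix algebra, so $C$ obeys the same bound as $A$. Within this ball, Lemma (\ref{af-theta-fd-lemma}) supplies continuity of $\mathsf{q}_N$ at $\mathsf{cf}(\theta)$, hence a radius $\delta$ (shrunk so the associated ball sits inside $\BaireSpace[\mathsf{cf}(\theta),N+1]$) with
\[
\mathsf{d}(\mathsf{cf}(\theta),\mathsf{cf}(\theta')) < \delta \implies B = \qpropinquity{}\left(\mathsf{q}_N(\mathsf{cf}(\theta)),\mathsf{q}_N(\mathsf{cf}(\theta'))\right) < \frac{\varepsilon}{3}\text{.}
\]
Continuity of $\mathsf{cf}$ then yields $\delta' > 0$ so that $|\theta - \theta'| < \delta'$ forces $\mathsf{d}(\mathsf{cf}(\theta),\mathsf{cf}(\theta')) < \delta$, and summing $A + B + C < \varepsilon$ finishes the proof.

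The real difficulty has been front-loaded into the earlier lemmas rather than this assembly: the principal obstacle is Lemma (\ref{af-theta-fd-lemma}), whose proof constructs an explicit bridge with unit pivot between the two finite-dimensional Lip-norms and controls its reach, resting in turn on the joint continuity of the fields of Lip-norms (Lemma (\ref{lip-norm-continuity-lemma})) and of the traces (Lemma (\ref{trace-continuity-lemma})), the latter driven by the $K$-theoretic formula $t(\theta,n) = (-1)^{n-1} q_n^\theta\left(\theta q_{n-1}^\theta - p_{n-1}^\theta\right)$ of Lemma (\ref{af-theta-trace-lemma}). The sole conceptual subtlety left at the level of the theorem is the dimension-matching point: the factor $\af{\theta,N}$ is not fixed but varies with $\theta$, so one must exploit the ultrametric structure of the Baire space to ensure that nearby irrationals share a long initial block of their Effros-Shen towers, which is precisely what makes the middle comparison $B$ well posed.
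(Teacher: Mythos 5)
Your proposal is correct and follows essentially the same route as the paper's proof: a triangle inequality through the common finite-dimensional block $\af{\theta,N}$, with Theorem (\ref{AF-lip-norms-thm}) controlling the two legs and Lemma (\ref{af-theta-fd-lemma}), via the homeomorphism $\mathsf{cf}$, controlling the middle term. The only minor difference is the choice of $N$: the paper invokes the uniform Fibonacci lower bound $q_n^\theta \geq q_n^\Phi$ (golden ratio) so that a single $N$ works for all irrationals simultaneously, whereas you fix $N$ from $q_N^\theta \to \infty$ for the given $\theta$ and then transfer the identical bound to $\theta'$ through the dimension-matching forced by $\mathsf{cf}(\theta') \in \BaireSpace[\mathsf{cf}(\theta),N+1]$ --- both are valid, and that dimension-matching observation is precisely the one the paper also needs to make the middle comparison well posed.
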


\begin{proof}
\setcounter{step}{0}

The golden ratio $\phi = \frac{1+\sqrt{5}}{2}$ and $\Phi = \phi - 1 = \frac{1}{\phi}$ be its reciprocal. The continued fraction expansion of $\Phi$ is given by:
\begin{equation*}
\Phi = \cfrac{1}{1 + \cfrac{1}{1 + \cfrac{1}{1 + \cfrac{1}{1 + \ddots}}}}
\end{equation*}
and $\af{\Phi}$ is sometimes called the Fibonacci C*-algebra \cite{Davidson}. Its importance for our work is that the associated sequence $(q_n^\Phi)_{n\in\N}$ defined by Expression (\ref{pq-rel-eq}) is the least possible sequence of the form $(q_n^\theta)_{n\in\N}$ given by the same expression, over all possible $\theta \in (0,1)\setminus\Q$ (where the order is defined entry-wise).

Let $\theta \in (0,1)\setminus\Q$. By Theorem (\ref{AF-lip-norms-thm}), we have for all $n\in\N$:
\begin{equation}\label{af-theta-thm-eq1}
\qpropinquity{}\left((\af{\theta},\Lip_\theta^k), (\af{\theta,n},\mathsf{l}_n(\theta,\cdot))\right) \leq \left(\frac{1}{(q_n^\theta)^2 + (q_{n-1}^\theta)^2}\right)^k \leq \left(\frac{1}{(q_n^\Phi)^2 + (q_{n-1}^\Phi)^2}\right)^k \text{,}
\end{equation}
where $\mathsf{l}_n$ is defined in Lemma (\ref{lip-norm-continuity-lemma}).

We are now in a position to conclude. Let $(\theta_m)_{m\in\N}$ be a sequence in $(0,1)\setminus\Q$ converging to $\theta$. Let $\varepsilon > 0$.

To begin with, let $N\in\N$ such that for all $n\geq N$, we have:
\begin{equation*}
\left(\frac{1}{(q_n^\Phi)^2 + (q_{n-1}^\Phi)^2}\right)^k \leq \frac{\varepsilon}{2}\text{.}
\end{equation*}

We thus have, for all $m\in\N$, that:
\begin{equation}\label{af-theta-thm-eq2}
\qpropinquity{}\left((\af{\theta},\Lip_\theta^k),(\af{\theta_m},\Lip_{\theta_m}^k)\right) \leq \varepsilon + \qpropinquity{}\left((\af{\theta,N},\mathsf{l}_N(\theta,\cdot)),(\af{\theta_m,N},\mathsf{l}_N(\theta_m,\cdot))\right)\text{.}
\end{equation}

Now, let $x_m = \mathsf{cf}(\theta_m)$ for all $m\in\N$ and $x=\mathsf{cf}(\theta)$. Since $\mathsf{cf}$ is a continuous, the sequence $(x_m)_{m\in\N}$ converges to $x$ in $\BaireSpace$. Thus there exists $M_1 \in\N$ such that, for all $m\geq M_1$, we have $\mathsf{d}(x,x_m) \leq \frac{1}{2^{N+1}}$, i.e. $x_m \in \BaireSpace[x,N+1]$.

We thus apply Lemma (\ref{af-theta-fd-lemma}) to obtain from Expression (\ref{af-theta-thm-eq2}) that:
\begin{equation*}
\qpropinquity{}\left((\af{\theta},\Lip_\theta^k),(\af{\theta_m},\Lip_{\theta_m}^k)\right) \leq \varepsilon + \qpropinquity{} \left(\mathsf{q}_N(\theta), \mathsf{q}_N(\theta_m)\right) \text{.}
\end{equation*}
Now, Lemma (\ref{af-theta-fd-lemma}) establishes that $\mathsf{q}_N$ is continuous. Hence:
\begin{equation*}
\limsup_{m\rightarrow\infty} \qpropinquity{}\left((\af{\theta},\Lip_\theta^k),(\af{\theta_m},\Lip_{\theta_m}^k)\right)\leq \varepsilon\text{.}
\end{equation*}
As $\varepsilon>0$ was arbitrary, our Theorem is proven.
\end{proof}

\section{Some compactness results for AF algebras}

The search for compact classes of quantum compact metric spaces for the quantum propinquity is a delicate yet interesting challenge. The main result on this topic is given by the following analogue of the Gromov compactness theorem, proven in \cite{Latremoliere15} by the second author; we quote it only for the case of {\Qqcms{(C,D)} rather than the more general {\gQqcms s} of \cite{Latremoliere15} as this suffices for our current setting.

\begin{definition}[\cite{Latremoliere15}*{Definition 4.1}]
Let $C \geq 1$ and $D\geq 0$. The \emph{covering number} $\covn{C,D}{\A,\Lip}{\varepsilon}$ of an {\Qqcms{(C,D)}} $(\A,\Lip)$, for some $\varepsilon$, is:
\begin{equation*}
\inf\left\{ \dim_\C\B : \begin{array}{l} (B,\Lip_\B)\text{ is a {\Qqcms{(C,D)}}}\\ \qpropinquity{}((\A,\Lip),(\B,\Lip_\B)) \leq \varepsilon\end{array}\right\}\text{.}
\end{equation*}
\end{definition}

\begin{theorem}[\cite{Latremoliere15}*{Theorem 4.2}]\label{tt-thm}
Let $\mathcal{A}$ be a class of {\Qqcms{(C,D)}s}, with $C\geq 1$ and $D\geq 0$, such that $\covn{C,D}{(\A,\Lip)}{\varepsilon} < \infty$ for all $\varepsilon > 0$ and $(\A,\Lip) \in \mathcal{A}$. The class $\mathcal{A}$ is totally bounded for the quantum propinquity $\qpropinquity{C,D}$ if and only if the conjunction of the the following two assertions hold:
\begin{enumerate}
\item there exists $\Delta > 0$ such that for all $(A,\Lip) \in \mathcal{A}$:
\begin{equation*}
\qdiam{\A}{\Lip} \leq \Delta\text{,}
\end{equation*}
\item there exists $G : (0,\infty) \rightarrow \N$ such that for all $(\A,\Lip)\in\mathcal{A}$ and all $\varepsilon>0$, we have:
\begin{equation*}
\covn{C,D}{\A,\Lip}{\varepsilon} \leq G(\varepsilon)\text{.}
\end{equation*}
\end{enumerate}
\end{theorem}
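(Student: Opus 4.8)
The plan is to establish the two implications separately, the reverse being the genuinely hard one as it is the noncommutative counterpart of Gromov's compactness theorem. For the forward implication, suppose $\mathcal{A}$ is totally bounded. I would first record the standing fact, inherent in the construction of the propinquity, that the Monge-Kantorovich diameter is controlled by $\qpropinquity{C,D}$: inspecting the height and reach of a bridge shows that $\left|\qdiam{\A}{\Lip_\A}-\qdiam{\B}{\Lip_\B}\right|$ is dominated by a fixed multiple of $\qpropinquity{C,D}((\A,\Lip_\A),(\B,\Lip_\B))$, so $\qdiam{\cdot}{\cdot}$ is continuous for the propinquity. Fixing a finite $1$-net $(\A_i,\Lip_i)_{i=1}^n$ of $\mathcal{A}$, every $(\A,\Lip)\in\mathcal{A}$ lies within $1$ of some net point and hence has diameter bounded by $\max_i\qdiam{\A_i}{\Lip_i}$ plus a constant, yielding a uniform bound $\Delta$, which is Assertion (1). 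For Assertion (2), fix $\varepsilon>0$ and a finite $\frac{\varepsilon}{2}$-net $(\A_i,\Lip_i)_{i=1}^n$; by hypothesis each covering number $\covn{C,D}{\A_i,\Lip_i}{\frac{\varepsilon}{2}}$ is finite, so there is a finite-dimensional $(C,D)$-quasi-Leibniz quantum compact metric space $(\B_i,\Lip_{\B_i})$ within $\frac{\varepsilon}{2}$ of $(\A_i,\Lip_i)$. For any $(\A,\Lip)\in\mathcal{A}$, choosing $i$ with $\qpropinquity{C,D}((\A,\Lip),(\A_i,\Lip_i))\leq\frac{\varepsilon}{2}$ and applying the triangle inequality of Theorem-Definition (\ref{def-thm}) gives $\qpropinquity{C,D}((\A,\Lip),(\B_i,\Lip_{\B_i}))\leq\varepsilon$, whence $\covn{C,D}{\A,\Lip}{\varepsilon}\leq\max_i\dim_\C\B_i=:G(\varepsilon)$.

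For the reverse implication, assume (1) and (2). I would reduce to the finite-dimensional case: given $\varepsilon>0$, Assertion (2) provides for each $(\A,\Lip)\in\mathcal{A}$ a finite-dimensional $(C,D)$-quasi-Leibniz quantum compact metric space $(\B,\Lip_\B)$ with $\dim_\C\B\leq d:=G\left(\frac{\varepsilon}{2}\right)$ and $\qpropinquity{C,D}((\A,\Lip),(\B,\Lip_\B))\leq\frac{\varepsilon}{2}$, and the continuity of the diameter forces $\qdiam{\B}{\Lip_\B}\leq\Delta':=\Delta+\varepsilon$. It therefore suffices to produce a finite $\frac{\varepsilon}{2}$-net for the class $\mathcal{F}_{d,\Delta'}$ of all finite-dimensional $(C,D)$-quasi-Leibniz quantum compact metric spaces of complex dimension at most $d$ and Monge-Kantorovich diameter at most $\Delta'$: any such net becomes an $\varepsilon$-net of $\mathcal{A}$ by the triangle inequality.

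The crux is the total boundedness of $\mathcal{F}_{d,\Delta'}$. First, up to $*$-isomorphism there are only finitely many C*-algebras of dimension at most $d$, since each is a direct sum $\bigoplus_j\alg{M}(n_j)$ with $\sum_j n_j^2\leq d$ and there are finitely many such tuples; hence it is enough to fix one finite-dimensional C*-algebra $\A$ and cover the set of its Lip-norms of diameter at most $\Delta'$. Fixing a reference state $\mu$, to each such $\Lip$ I associate $L_\mu(\Lip)=\left\{a\in\sa{\A}:\Lip(a)\leq 1,\ \mu(a)=0\right\}$; the diameter bound forces the spectral spread of every $a\in L_\mu(\Lip)$ to be at most $\Delta'$, so $L_\mu(\Lip)$ is a compact, convex, balanced subset of the fixed ball $\left\{a\in\sa{\A}:\mu(a)=0,\ \|a\|_\A\leq\Delta'\right\}$. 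Using the single-algebra bridge $\gamma=(\A,\unit_\A,\mathrm{id},\mathrm{id})$, whose height is zero and whose bridge seminorm is $\bridgenorm{\gamma}{a,b}=\|a-b\|_\A$, and splitting each Lip-ball as the Minkowski sum $L_\mu(\Lip)\oplus\R\unit_\A$ (the scalar summand cancelling in the bridge seminorm), one obtains for two such Lip-norms the bound
\begin{equation*}
\qpropinquity{C,D}((\A,\Lip),(\A,\Lip'))\leq\bridgelength{\gamma}{\Lip,\Lip'}=\Haus{\|\cdot\|_\A}\left(L_\mu(\Lip),L_\mu(\Lip')\right)\text{.}
\end{equation*}
Since the compact convex subsets of a fixed ball in the finite-dimensional space $\left\{a\in\sa{\A}:\mu(a)=0\right\}$ form a compact family for the Hausdorff distance by the Blaschke selection theorem, the assignment $\Lip\mapsto L_\mu(\Lip)$ exhibits the Lip-norms of diameter at most $\Delta'$ as totally bounded for $\qpropinquity{C,D}$. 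Taking the finite union over the $*$-isomorphism classes of dimension at most $d$ shows that $\mathcal{F}_{d,\Delta'}$ is totally bounded, completing the proof.

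The main obstacle I anticipate lies in this final compactness step. The vanishing of the height is immediate from the pivot being $\unit_\A$, but one must verify carefully that the reach reduces to the stated Hausdorff distance: this uses that a single reference state $\mu$ may be fixed for \emph{both} Lip-norms, giving the common splitting $\left\{a:\Lip(a)\leq 1\right\}=L_\mu(\Lip)\oplus\R\unit_\A$ in which the scalar summand cancels within the bridge seminorm. One must also confirm that the degenerate limits produced by Blaschke selection --- where a limiting convex body is lower-dimensional and would correspond to a Lip-norm that is infinite in some direction, a partial metric collapse --- cause no difficulty, since total boundedness only requires a finite net drawn from $\mathcal{F}_{d,\Delta'}$ and imposes no closedness on the class itself.
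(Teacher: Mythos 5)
This theorem is not proved in the paper at all: it is imported from Theorem 4.2 of \cite{Latremoliere15} and used as a black box, so there is no internal proof to measure your argument against; it must stand on its own, and it essentially does. Your forward direction is the standard net argument (using that $\covn{C,D}{\A,\Lip}{\varepsilon}$ is an infimum of a nonempty set of natural numbers, hence attained by some finite-dimensional space). Your reverse direction follows the natural skeleton: reduce, via assertion (2) and the diameter bound, to the class $\mathcal{F}_{d,\Delta'}$ of finite-dimensional {\Qqcms{(C,D)}s} with $\dim_\C\B\leq d$ and $\qdiam{\B}{\Lip_\B}\leq\Delta'$; note there are only finitely many *-isomorphism classes of C*-algebras of dimension at most $d$ (isomorphic representatives being at propinquity zero by Theorem-Definition (\ref{def-thm})); and, over a fixed algebra $\A$, convert Hausdorff proximity of the compact convex sets $L_\mu(\Lip)$ into a propinquity bound through the identity bridge with pivot $\unit_\A$, invoking Blaschke selection for total boundedness of the family of such sets. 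The bridge computation is sound: the splitting $\{a\in\sa{\A}:\Lip(a)\leq 1\}=L_\mu(\Lip)+\R\unit_\A$ is exact, the height vanishes, and the norm bound $\|a\|_\A\leq\Delta'$ for $a\in L_\mu(\Lip)$ follows from $|\varphi(a)|=|\varphi(a)-\mu(a)|\leq\Kantorovich{\Lip}(\varphi,\mu)\leq\qdiam{\A}{\Lip}$.

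Three details should be tightened, none fatal. First, the reach of your identity bridge need not \emph{equal} $\Haus{\|\cdot\|_\A}\left(L_\mu(\Lip),L_\mu(\Lip')\right)$: optimizing over the scalar summand can strictly decrease distances (take $a_0-b_0$ with $\mu$-average zero but spectrum asymmetric about the origin); only the inequality $\leq$ holds, which is all your argument actually needs. Second, the continuity of $\qdiam{\cdot}{\cdot}$ with respect to $\qpropinquity{C,D}$, which you invoke in both directions, is true but is not among the properties listed in Theorem-Definition (\ref{def-thm}); it deserves a justification, for instance the chain $\left|\qdiam{\A}{\Lip_\A}-\qdiam{\B}{\Lip_\B}\right|\leq 2\,\mathrm{GH}\left(\StateSpace(\A),\StateSpace(\B)\right)\leq 2\,\dist_q\leq 2\,\qpropinquity{C,D}$, using the comparison theorem quoted in Section 2 together with Rieffel's estimate from \cite{Rieffel00}, with the constant then propagated into your definition of $\Delta'$. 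Third, your finite nets for $\mathcal{A}$ consist of points of $\mathcal{F}_{d,\Delta'}$ rather than of $\mathcal{A}$; this is harmless, since any external $\varepsilon$-net yields an internal $2\varepsilon$-net by selecting, in each ball that meets $\mathcal{A}$, an element of $\mathcal{A}$, but it should be stated because total boundedness is ordinarily phrased with nets drawn from the class itself.
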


Our construction in Theorem (\ref{AF-lip-norms-thm}) is designed so that AF algebras with faithful tracial states are indeed limits of finite dimensional quasi-Leibniz quantum metric spaces, so we may apply Theorem (\ref{tt-thm}) to obtain:

\begin{theorem}\label{total-boundedness-thm}
If $U, L : \N \rightarrow \N\setminus\{0\}$ are two sequences in $\N\setminus\{0\}$ such that $\lim_\infty L = \lim_{\infty} U = \infty$ while $L(n)\leq  U(n)$ for all $n\in\N$, and if $k\in (0,\infty)$, then the class:
\begin{equation*}
\mathcal{AF}^k(L,U) = \left\{ (\A, \Lip_\A ) \in \mathcal{AF}^k \middle\vert \begin{array}{l}
\exists \mathcal{I} = (\A_n,\alpha_n)_{n\in\N} \quad \A = \varinjlim \mathcal{I} \\
\A_0 = \C\\
\forall n\in \N \quad L(n) \leq \dim \A_n \leq U(n)\\
\exists \mu \text{ faithful tracial state on $\A$}\quad \Lip_\A = \Lip_{\mathcal{I},\mu}^k
\end{array} \right\}
\end{equation*}
is totally bounded for the quantum propinquity.
\end{theorem}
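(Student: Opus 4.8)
The plan is to invoke the noncommutative Gromov compactness theorem, Theorem \ref{tt-thm}, which reduces total boundedness of a class (whose members all have finite covering numbers) to two uniform estimates: a uniform bound on the state-space diameters, and a uniform bound on the covering numbers. First I would check the standing hypothesis of Theorem \ref{tt-thm}, namely that $\covn{2,0}{\A,\Lip_\A}{\varepsilon}$ is finite for every $(\A,\Lip_\A)\in\mathcal{AF}^k(L,U)$ and every $\varepsilon>0$: this is immediate from Theorem \ref{AF-lip-norms-thm}, which produces finite dimensional $(2,0)$-quasi-Leibniz quantum compact metric spaces arbitrarily close to $(\A,\Lip_\A)$ for $\qpropinquity{}$, so the infimum defining the covering number is taken over a nonempty set of finite dimensions.

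The uniform diameter bound, Assertion (1) of Theorem \ref{tt-thm}, is then obtained from Corollary \ref{fin-diam-cor}. Any member of $\mathcal{AF}^k(L,U)$ is of the form $\left(\varinjlim\mathcal{I},\Lip_{\mathcal{I},\mu}^k\right)$ with $\A_0=\C$, so its defining sequence $\beta$ satisfies $\beta(0)=\dim(\A_0)^{-k}=1$; hence $\qdiam{\A}{\Lip_\A}\leq 2\beta(0)=2$, and $\Delta=2$ works uniformly over the whole class.

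The heart of the argument is Assertion (2), the uniform covering bound. Fix $(\A,\Lip_\A)\in\mathcal{AF}^k(L,U)$ with defining inductive sequence $\mathcal{I}=(\A_n,\alpha_n)_{n\in\N}$ and faithful trace $\mu$, so that $\Lip_\A=\Lip_{\mathcal{I},\mu}^k$ and the corresponding sequence is $\beta(n)=\dim(\A_n)^{-k}$. By Theorem \ref{AF-lip-norms-thm} together with the defining constraint $\dim\A_n\geq L(n)$,
\begin{equation*}
\qpropinquity{}\left(\left(\A_n,\Lip_\A\circ\indmor{\alpha}{n}\right),(\A,\Lip_\A)\right)\leq \dim(\A_n)^{-k}\leq L(n)^{-k}\text{.}
\end{equation*}
Since $\lim_\infty L=\infty$, for each $\varepsilon>0$ I can select an index $n(\varepsilon)$ with $L(n(\varepsilon))^{-k}\leq\varepsilon$; crucially, this choice depends only on $\varepsilon$, $k$, and the fixed sequence $L$, and not on the particular algebra in the class. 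The finite dimensional $(2,0)$-quasi-Leibniz quantum compact metric space $\left(\A_{n(\varepsilon)},\Lip_\A\circ\indmor{\alpha}{n(\varepsilon)}\right)$ lies within $\varepsilon$ of $(\A,\Lip_\A)$ and has complex dimension $\dim\A_{n(\varepsilon)}\leq U(n(\varepsilon))$, using the upper constraint. Therefore $\covn{2,0}{\A,\Lip_\A}{\varepsilon}\leq U(n(\varepsilon))$, and setting $G(\varepsilon)=U(n(\varepsilon))$ furnishes a function $G:(0,\infty)\to\N$ bounding all covering numbers uniformly.

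With both assertions of Theorem \ref{tt-thm} verified (with $C=2$, $D=0$), the conclusion that $\mathcal{AF}^k(L,U)$ is totally bounded follows at once. I do not expect a genuine obstacle here, as Theorem \ref{AF-lip-norms-thm} already performs all the analytic work of producing the finite dimensional approximants; the only point requiring care is the \emph{uniformity} of the index $n(\varepsilon)$, which must be chosen independently of the specific algebra. This is exactly what the hypotheses supply: the common lower control $L$ and common exponent $k$ force the approximation error below $\varepsilon$ simultaneously for all members at a single index, while the common upper control $U$ then bounds the dimension of the approximant uniformly.
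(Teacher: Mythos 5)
Your proof is correct and follows essentially the same route as the paper: both verify the hypotheses of Theorem (\ref{tt-thm}) by using the estimate of Theorem (\ref{AF-lip-norms-thm}) together with the bounds $L(n)\leq\dim\A_n\leq U(n)$ to get a covering bound $\covn{2,0}{\A,\Lip_\A}{\varepsilon}\leq U(N)$ at an index $N$ chosen from $L$ and $\varepsilon$ alone, and the diameter bound $\qdiam{\A}{\Lip_\A}\leq 2$ from Corollary (\ref{fin-diam-cor}). Your explicit check of the standing finiteness hypothesis for the covering numbers is a small point the paper leaves implicit, but the argument is otherwise identical.
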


\begin{proof}
Let $\varepsilon>0$. Let $N\in\N$ such that for all $n\geq N$ we have $L(n) \geq \sqrt[k]{\frac{1}{\varepsilon}}$. 

If $(\A,\Lip)\in\mathcal{AF}^k (L,U)$ then by definition, $\A = \varinjlim \mathcal{I}$ where $\mathcal{I} = (\A_n,\alpha_n)$ such that $U(n) \geq \dim_\C \A_n \geq L(n)$ for all $n\in\N$ and $\Lip = \Lip_{\mathcal{I},\mu}^k$ for some faithful tracial state $\mu$ of $\A$. 

Therefore, by Theorem (\ref{AF-lip-norms-thm}):
\begin{equation*}
\qpropinquity{}((\A,\Lip),(\A_N,\Lip\circ\indmor{\alpha}{N}))\leq\frac{1}{\dim(\A_N)^k}\leq \frac{1}{L(N)^k} \leq \varepsilon\text{.}
\end{equation*}
Thus $\covn{2,0}{\A,\Lip}{\varepsilon} \leq U(N)$. Moreover, $\qdiam{\A}{\Lip} \leq 2$, and thus by Theorem (\ref{tt-thm}), the class $\mathcal{AF}^k(L,U)$ is totally bounded for $\qpropinquity{}$.
\end{proof}

The quantum propinquity is not known to be complete. The dual propinquity \cite{Latremoliere13b}, introduced and studied by the second author, is a complete metric and the proper formulation of Theorem (\ref{tt-thm}) can thus be used to characterized compactness of certain classes of quasi-Leibniz compact quantum metric spaces. However, we face a few challenges when searching for compact subclasses of $\mathcal{AF}^k$. 

As the quantum propinquity dominates the dual propinquity, Theorems (\ref{AF-lip-norms-thm}), (\ref{uhf-thm}) and (\ref{af-theta-thm}) are all valid for the dual propinquity, as is Theorem (\ref{total-boundedness-thm}). However, we do not know what is the closure of the classes described in Theorem (\ref{total-boundedness-thm}) for the dual propinquity, and thus we may not conclude whether these classes are, in general, compact. It should be noted that, as shown by the second author in \cite{Latremoliere15}, there are many quasi-Leibniz quantum compact metric spaces which are limits of finite dimensional quasi-Leibniz quantum compact metric spaces for the dual propinquity.

Moreover, we do not know what the completion of the classes in Theorem (\ref{total-boundedness-thm}) are for the quantum propinquity either. Thus it is again difficult to describe compact classes from Theorem (\ref{total-boundedness-thm}).

Yet, the situation is actually quite interesting if looked at from a somewhat different perspective. Indeed, Theorems (\ref{uhf-thm}) and (\ref{af-theta-thm}) provide us with continuous maps from the Baire space to subclasses of $\mathcal{AF}^k$. Thus, knowledge about the compact subsets of $\BaireSpace$ provides actual knowledge of some compact subclasses of $\mathcal{AF}^k$ for the quantum propinquity. 

To illustrate this point, we begin by giving a theorem characterizing closed, totally bounded, and compact subspaces of the Baire space. This theorem is well-known in descriptive set theory; however the proofs of these results seem scattered in the literature and, maybe more importantly, rely on a more complex framework and terminology than is needed for our purpose. We thus include a short proof for the convenience of our readers.

\begin{notation}
If $x\in \BaireSpace$ and $n\in\N$ then we denote the finite sequence $(x_0,\ldots,x_n)$ by $x|_n$.
\end{notation}

\begin{theorem}\label{BaireSpace-top-thm}
The Baire Space $\BaireSpace$ is complete for the ultrametric $\mathsf{d}$, defined for all $x,y \in \BaireSpace$ by:
\begin{equation*}
\mathsf{d}(x,y) = 2^{-\min\{n \in \N\cup\{\infty\} : x|_n \not= y|_n \}}\text{.}
\end{equation*}
Thus the compact subsets of $\BaireSpace$ are its closed, totally bounded subsets. Moreover, for any $X\subseteq \BaireSpace$:
\begin{enumerate}
\item the closure of $X$ is the set:
\begin{equation*}
\left\{ x \in \BaireSpace : \forall n\in\N \;\exists y \in X \quad x|_n = y|_n \right\}
\end{equation*}
\item $X$ is totally bounded if and only for all $n\in\N$:
\begin{equation*}
\{ x|_n : x \in X \}
\end{equation*}
is finite.
\end{enumerate}
\end{theorem}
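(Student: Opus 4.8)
The plan is to establish completeness first, deduce the characterization of compactness from general metric space theory, and then verify the two explicit descriptions by direct arguments on prefixes. Throughout, the organizing observation is the equivalence, for $x,y\in\BaireSpace$ and $n\in\N$, between the prefix condition $x|_n = y|_n$ and the metric condition $\mathsf{d}(x,y)\leq 2^{-(n+1)}$: indeed $x|_n=y|_n$ means $x$ and $y$ agree on the $n+1$ coordinates indexed $0,\ldots,n$, so the first index of disagreement is at least $n+1$, and conversely.

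First I would prove that $(\BaireSpace,\mathsf{d})$ is complete. Given a Cauchy sequence $(x^m)_{m\in\N}$, I note that for each fixed $n\in\N$ there is an index $M$ with $\mathsf{d}(x^m,x^{m'})\leq 2^{-(n+1)}$ for all $m,m'\geq M$, which by the equivalence above forces $x^m|_n = x^{m'}|_n$. Thus every coordinate $x^m(j)$ is eventually constant in $m$; defining $x(j)$ to be this eventual value produces an element $x\in\BaireSpace$ (each $x(j)\in\N\setminus\{0\}$), and agreement on longer and longer prefixes gives $\mathsf{d}(x^m,x)\to 0$. Once completeness is in hand, the statement that the compact subsets of $\BaireSpace$ are exactly its closed, totally bounded subsets is the standard fact that a subset of a complete metric space is compact if and only if it is closed and totally bounded.

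Next I would prove the closure formula (1). For the inclusion of $\overline{X}$ in the displayed set, I fix $x\in\overline{X}$ and $n\in\N$ and choose $y\in X$ with $\mathsf{d}(x,y)\leq 2^{-(n+1)}$, whence $x|_n=y|_n$. For the reverse inclusion, if for every $n$ there is $y\in X$ with $x|_n=y|_n$, then $\mathsf{d}(x,y)\leq 2^{-(n+1)}$ for such $y$, so $x$ is a limit of points of $X$ and hence lies in $\overline{X}$. For the total boundedness criterion (2) I would use cylinder sets: the cylinder $\{x\in\BaireSpace : x|_n = s\}$ attached to a finite sequence $s=(s_0,\ldots,s_n)$ has $\mathsf{d}$-diameter at most $2^{-(n+1)}$. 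If $\{x|_n : x\in X\}$ is finite for every $n$, then given $\varepsilon>0$ I pick $n$ with $2^{-(n+1)}\leq\varepsilon$ and cover $X$ by the finitely many cylinders it meets, each of diameter at most $\varepsilon$. Conversely, if $X$ is totally bounded, then for fixed $n$ a finite $2^{-(n+1)}$-net $z_1,\ldots,z_r$ covers $X$, and since $\mathsf{d}(x,z_i)\leq 2^{-(n+1)}$ implies $x|_n=z_i|_n$, the set $\{x|_n:x\in X\}$ is contained in the finite set $\{z_i|_n : 1\leq i\leq r\}$.

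Every step here is elementary, so I do not expect a genuine obstacle; the only point requiring care is the one-step index shift between the length-$(n+1)$ prefix $x|_n$ and the distance $2^{-(n+1)}$, which I would isolate as the equivalence stated at the outset and then reuse verbatim in each part to avoid off-by-one errors.
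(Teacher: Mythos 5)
Your proposal is correct and follows essentially the same route as the paper's proof: coordinatewise eventual constancy of Cauchy sequences for completeness, the standard ``closed and totally bounded in a complete space equals compact'' fact, and the prefix--distance equivalence $x|_n = y|_n \Leftrightarrow \mathsf{d}(x,y)\leq 2^{-(n+1)}$ driving both the closure formula and the total boundedness criterion. The only differences are cosmetic (you prove the closure identity by a direct double inclusion where the paper shows the displayed set is closed and invokes minimality of the closure, and you phrase total boundedness via cylinder covers rather than finite nets inside $X$), so nothing further is needed.
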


\begin{proof}
We prove each assertion of our theorem in each of the following step.
\setcounter{step}{0}
\begin{step}
The space $(\BaireSpace,\mathsf{d})$ is complete.
\end{step}

Let $(x^m)_{m\in\N}$ be a Cauchy sequence in $(\BaireSpace,\mathsf{d})$. For all $n\in \N$, there exists $M\in\N$ such that, if $p,q \geq M$, we have $\mathsf{d}(x^p,x^q) < \frac{1}{2^n}$. Since $\mathsf{d}$ is an ultra-metric, we have equivalently that $\mathsf{d}(x^M,x^p) < \frac{1}{2^n}$ for all $p \geq M$: thus for all $m\geq M$ we have $x^M|_n = x^p|_n$. In particular, $(x_n^m)_{m\in\N}$ is an eventually constant function for all $n\in\N$. It is then trivial to check that the sequence $(\lim_{m\rightarrow\infty} x_n^m)_{n\in\N}$ is the limit of $(x^m)_{m\in\N}$.

\begin{step}
The closure of $X\subseteq\BaireSpace$ is:
\begin{equation*}
Y = \left\{ x \in \BaireSpace : \forall n\in\N \;\exists y \in X \quad x|_n = y|_n \right\}
\end{equation*}
\end{step}

Note that by definition, $X\subseteq Y$. We now check that $Y$ is closed. Let $(z^m)_{m\in\N}$ be a sequence in $Y$ converging to some $z\in \BaireSpace$. By definition of $\mathsf{d}$, for all $N\in\N$, there exists $M\in \N$ such that for all $m\geq M$ we have $\mathsf{d}(z^m,z) < \frac{1}{2^N}$. Thus $z^M|_N = z|_N$ by definition. So $z\in Y$ as desired, and thus $Y$ is closed.

Let now $y \in Y$. Let $n\in\N$. By definition, there exists $x^n \in X$ such that $x^n|_n = y|_n$, i.e. $\mathsf{d}(x^n,y) < \frac{1}{2^n}$. Thus $(x^n)_{n\in\N}$ converges to $y$. Thus $Y$ is contained in the closure of $X$. Since $Y$ is closed, it follows from the minimality of closures that $Y$ is indeed the closure of $X$.

\begin{step}
A characterization of totally bounded subsets of the Baire Space.
\end{step}

Assume now that $X$ is totally bounded. Then for all $n \in \N$ there exists a finite subset $X_n$ of $X$ such that for all $x\in X$ there exists $y \in X_n$ with $\mathsf{d}(x,y)< \frac{1}{2^n}$, or equivalently, such that $x|_n = y|_n$. Thus $\{x|_n : x\in X\} = \{x|_n : x\in X_n\}$, the latter being finite. Conversely, note that $X_n$ converges to $X$ for the Hausdorff distance $\Haus{\mathsf{d}}$, and thus if $(X_n)_{n\in\N}$ is finite for all $n\in\N$, we conclude easily that $X$ is totally bounded.
\end{proof}

\begin{remark}
Theorem (\ref{BaireSpace-top-thm}) is well-known in descriptive set theory, though the proof is often presented within a much more elaborate framework. Our assertion about the closure of sets is often phrased by noting that a subset of $\BaireSpace$ is closed if and only if it is given as all infinite paths in a pruned tree. In this context, a tree over the Baire Space is a subset of the collection of all \emph{finite} sequences valued in $\N\setminus\{0\}$ with a simple hereditary property: if a finite sequence is in our tree, then so is its sub-sequence obtained by dropping the last entry. A pruned tree is a tree $T$ such that every sequence in it is a proper sub-sequence of another element of $T$. Last, a path is simply a sequence $x\in\BaireSpace$ such that $x|_n \in T$ for all $N$. This relation makes the translation between Theorem (\ref{BaireSpace-top-thm}) and the terminology of certain branches of set theory.

Moreover, a tree is finitely branching when given a finite sequence $x$ of length $n$ in the tree, there are only finitely many possible finite sequences of length $n+1$ whose $n$ first entries coincide with $x$. It is easy to see that Theorem (\ref{BaireSpace-top-thm}) exactly states that a subset of the Baire space is compact if and only if it consists of all infinite paths through a pruned tree with finite branching (and our theorem makes the tree explicit)
\end{remark}

We now apply Theorem (\ref{BaireSpace-top-thm}) to identify certain compact subclasses of UHF algebras and Effros-Shen AF algebras.

\begin{corollary}\label{uhf-compact-corollary}  
For all $k\in\N$ and all sequence $B : \N\rightarrow \N\setminus\{0\}$ with $\sqrt{\frac{B(n+1)}{B(n)}} \in \N\setminus\{0,1\}$ for all $n\in\N$, the class:
\begin{equation*}
\mathcal{UHF}^k \cap \mathcal{AF}^k((2^n)_{n\in\N}, B)
\end{equation*}
is compact for the quantum propinquity $\qpropinquity{}$.
\end{corollary}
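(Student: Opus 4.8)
The plan is to exhibit the class $\mathcal{UHF}^k\cap\mathcal{AF}^k((2^n)_{n\in\N},B)$ as the image, under the continuous map $\uhf{\cdot,k}$ of Theorem (\ref{uhf-thm}), of a compact subset of the Baire space $\BaireSpace$, and then to invoke the elementary fact that the continuous image of a compact metric space is compact --- an argument requiring no completeness of $\qpropinquity{}$. To this end, set
\begin{equation*}
K = \left\{ b\in\BaireSpace : \boxtimes b(n)^2 \leq B(n)\text{ for all }n\in\N \right\}\text{.}
\end{equation*}
Since $\dim\alg{M}(\boxtimes b(n)) = \boxtimes b(n)^2$ and $\boxtimes b(n) = \prod_{j=0}^{n-1}(b(j)+1)\geq 2^n$, the lower bound $2^n\leq\dim\alg{M}(\boxtimes b(n))$ holds automatically, so membership $b\in K$ is precisely the requirement that the canonical full-matrix inductive sequence $\mathcal{I}(b)$ underlying $\uhf{b,k}$ satisfies the dimension constraints defining $\mathcal{AF}^k((2^n)_{n\in\N},B)$.

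Next I would prove that $K$ is compact by appealing to Theorem (\ref{BaireSpace-top-thm}). For each fixed $n$ the condition $\boxtimes b(n)^2\leq B(n)$ depends only on the coordinates $b(0),\ldots,b(n-1)$ and hence cuts out a clopen subset of $(\BaireSpace,\mathsf{d})$; as an intersection of such sets, $K$ is closed. For total boundedness I would use the criterion of Theorem (\ref{BaireSpace-top-thm})(2): for fixed $N$, every $b\in K$ satisfies $\prod_{j=0}^{N}(b(j)+1)=\boxtimes b(N+1)\leq\sqrt{B(N+1)}$, and since each factor is at least $2$ this forces $b(j)\leq\sqrt{B(N+1)}-1$ for all $j\leq N$; thus $\{b|_N : b\in K\}$ is finite for every $N$. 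By Theorem (\ref{BaireSpace-top-thm}), $K$ is therefore compact.

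It remains to identify $\mathcal{UHF}^k\cap\mathcal{AF}^k((2^n)_{n\in\N},B)$ with $\uhf{K,k}$, and this is where I expect the real difficulty. The inclusion $\uhf{K,k}\subseteq\mathcal{UHF}^k\cap\mathcal{AF}^k((2^n)_{n\in\N},B)$ is immediate, since for $b\in K$ the algebra $\alg{u}(b)$ is UHF and $\mathcal{I}(b)$ meets the dimension bounds. The reverse inclusion is the crux: given $(\A,\Lip)$ in the class, one must produce $b\in K$ with $(\A,\Lip)$ isometrically isomorphic to $\uhf{b,k}$, i.e.\ one must show that the Lip-norm, a priori built from some inductive sequence $\mathcal{I}$ of arbitrary finite-dimensional building blocks, in fact arises from a strictly full-matrix sequence with $\A_0=\C$ and multiplicities at least $2$. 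Securing this step is the main obstacle, and it is where the structure theory of UHF algebras --- simplicity, uniqueness of the faithful trace, and classification by supernatural number --- must be brought to bear; the resulting multiplicity sequence then determines the point $b$, whose dimension bounds place it in $K$. Once the identification holds, continuity of $\uhf{\cdot,k}$ from Theorem (\ref{uhf-thm}) together with the compactness of $K$ yields that $\uhf{K,k}$, and hence the class in question, is compact for $\qpropinquity{}$; the total-boundedness half of this conclusion can moreover be checked independently from Theorem (\ref{total-boundedness-thm}) applied with $L=(2^n)_{n\in\N}$ and $U=B$.
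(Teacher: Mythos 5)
Your proposal has exactly the same skeleton as the paper's proof: exhibit the class as the image, under the continuous map $\uhf{\cdot,k}$ of Theorem (\ref{uhf-thm}), of a subset of $\BaireSpace$ proved compact via Theorem (\ref{BaireSpace-top-thm}), and conclude since a continuous image of a compact set is compact. The substantive difference is the choice of that subset. The paper uses the per-step multiplicity bounds $X = \{ x\in\BaireSpace : \forall n\in\N,\ x_n+1 \leq \sqrt{B(n+1)/B(n)} \}$, whereas you use the cumulative dimension bounds $K = \{ b\in\BaireSpace : \forall n\in\N,\ \boxtimes b(n)^2 \leq B(n) \}$. One always has $X\subseteq K$ (for $x\in X$, $\boxtimes x(n)^2 \leq B(n)/B(0) \leq B(n)$), but the inclusion can be strict: take $B=(1,9,36,144,576,\ldots)$, whose multipliers are $3,2,2,\ldots$; then $b=(1,2,1,1,\ldots)$, with multiplicities $2,3,2,2,\ldots$ and dimensions $1,4,36,144,\ldots \leq B(n)$, lies in $K\setminus X$, and $\uhf{b,k}$ belongs to the class $\mathcal{UHF}^k\cap\mathcal{AF}^k((2^n)_{n\in\N},B)$, since that class constrains \emph{dimensions}, not individual multiplicities. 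Such a space is not isometrically isomorphic to any $\uhf{x,k}$ with $x\in X$ (the filtrations already differ at the first level, $\alg{M}(2)$ versus $\alg{M}(3)$). So your $K$ is the set that actually matches the definition of the class, while the paper's asserted equality $\uhf{X,k}=\mathcal{UHF}^k\cap\mathcal{AF}^k((2^n)_{n\in\N},B)$ is too small on the left; on this point your argument is more careful than the paper's. Your verification that $K$ is closed and totally bounded is correct and is exactly the intended use of Theorem (\ref{BaireSpace-top-thm}).

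Concerning the step you call the crux --- that every member of the class equals some $\uhf{b,k}$, $b\in K$, up to isometric isomorphism --- you should know the paper does not prove it either: its whole justification is the phrase ``by construction,'' resting on the surjectivity clause of Theorem (\ref{uhf-thm}), which is itself justified there by asserting that any Lip-norm with $(\alg{U},\Lip)\in\mathcal{UHF}^k$ ``can be obtained, by definition, from such a multiplicity sequence.'' That is, the paper reads its own definitions so that membership in $\mathcal{UHF}^k$ directly hands you a strictly full-matrix inductive sequence realizing the Lip-norm; the dimension bounds then say precisely that the associated point of $\BaireSpace$ lies in $K$ (note: in your $K$, not in the paper's $X$). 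No classification by supernatural numbers or uniqueness-of-trace argument is deployed or needed under that reading. You are right that, read literally, $\mathcal{AF}^k$ admits Lip-norms built from sequences whose blocks are not full matrix algebras (e.g.\ interleavings $\C\subset\C^2\subset\alg{M}(2)\subset\alg{M}(2)\oplus\alg{M}(2)\subset\alg{M}(4)\subset\cdots$ inside the CAR algebra, with dimensions $2^n$), and then the identification genuinely requires an argument or a sharpening of the definitions; but that is a gap in the paper itself, not one your proposal introduces. Granting the identification on the same definitional terms the paper grants itself, your proof is complete.
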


\begin{proof}
Let:
\begin{equation*}
X = \left\{ x \in \BaireSpace : \forall n\in \N\quad x_n + 1 \leq \sqrt{\frac{B(n+1)}{B(n)}}\right\}\text{.}
\end{equation*}
By construction, $\uhf{X,k} = \mathcal{UHF}^k \cap \mathcal{AF}^k((2^n)_{n\in\N}, B)$ (the lower bound on the dimension of the matrix algebras was observed in the proof of Theorem (\ref{uhf-thm})).  On the other hand, by Theorem (\ref{BaireSpace-top-thm}), the set $X$ is compact and by Theorem (\ref{uhf-thm}), the map $\uhf{\cdot,k}$ is continuous. So $\mathcal{UHF}^k \cap \mathcal{AF}^k((2^n)_{n\in\N}, B)$ is compact.
\end{proof}

We also obtain:

\begin{corollary}\label{af-theta-compact-corollary}
Let $C, B \in \BaireSpace$, and set:
\begin{equation*}
X = \left\{ \theta \in (0,1)\setminus\Q : \theta = \lim_{n\rightarrow\infty} \cfrac{1}{r_1 + \cfrac{1}{r_2 + \cfrac{1}{\cdots + \cfrac{1}{r_n}}}} \text{ and }\forall n\in \N \quad C(n) \leq r_n \leq B(n) \right\}
\end{equation*}
Then the set:
\begin{equation*}
\left\{ (\A,\Lip) \in \mathcal{AF}^k : \A\in \af{X} \right\}
\end{equation*}
is compact for the quantum propinquity $\qpropinquity{}$.
\end{corollary}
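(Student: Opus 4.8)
The plan is to transport the problem to the Baire space $\BaireSpace$ through the homeomorphism $\mathsf{cf}$, recognize $\mathsf{cf}(X)$ as a compact ``box,'' and then push it forward through the continuous map of Theorem (\ref{af-theta-thm}), in exact parallel with the proof of Corollary (\ref{uhf-compact-corollary}). First I would observe that, writing $\mathsf{cf}(\theta) = (b_n)_{n\in\N}$ so that $\theta = [0,b_0,b_1,\ldots]$ and hence $b_n$ is the partial quotient $r_{n+1}$, the constraint $C(n)\leq r_n\leq B(n)$ translates into a coordinatewise (shifted) constraint on $\mathsf{cf}(\theta)$. Thus $\mathsf{cf}(X)$ is exactly the set
\begin{equation*}
Y = \left\{ x\in\BaireSpace : \forall n\in\N\quad C(n+1)\leq x_n \leq B(n+1) \right\}\text{.}
\end{equation*}
If $C(m)>B(m)$ for some $m$, then $X=Y=\emptyset$ and the statement is trivial, so we may assume $Y\neq\emptyset$.

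Second, I would establish that $Y$ is compact using Theorem (\ref{BaireSpace-top-thm}). For total boundedness, note that for each $n\in\N$ the set of initial segments $\{x|_n : x\in Y\}$ is contained in the finite product $\prod_{j=0}^{n}\{C(j+1),\ldots,B(j+1)\}$, hence is finite; by assertion (2) of Theorem (\ref{BaireSpace-top-thm}), $Y$ is totally bounded. For closedness, assertion (1) of that theorem shows that any $x$ in the closure of $Y$ satisfies, for every $n$, the equality $x|_n = y|_n$ for some $y\in Y$, whence $C(n+1)\leq x_n\leq B(n+1)$ for all $n$; thus $x\in Y$ and $Y$ is closed. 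Being closed and totally bounded, $Y$ is compact by Theorem (\ref{BaireSpace-top-thm}).

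Third, since $\mathrm{ir} : \BaireSpace \to (0,1)\setminus\Q$ is a homeomorphism, the set $X = \mathrm{ir}(Y)$ is a compact subset of $(0,1)\setminus\Q$. Finally, Theorem (\ref{af-theta-thm}) asserts that $\theta \mapsto (\af{\theta},\Lip_\theta^k)$ is continuous for the quantum propinquity $\qpropinquity{}$, so the image of the compact set $X$, namely $\left\{(\af{\theta},\Lip_\theta^k) : \theta\in X\right\}$, is compact, which is the class in the statement.

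The only genuinely delicate point is the last identification: one must confirm that the displayed class $\left\{(\A,\Lip)\in\mathcal{AF}^k : \A\in\af{X}\right\}$ coincides with the image $\left\{(\af{\theta},\Lip_\theta^k):\theta\in X\right\}$ of the canonical map. Here the Lip-norm attached to each $\af{\theta}$ is understood to be $\Lip_\theta^k = \Lip_{\mathcal{I}(\theta),\sigma_\theta}^k$, built from the Effros--Shen inductive sequence $\mathcal{I}(\theta)$ of Notation (\ref{af-theta-notation}) and the unique faithful trace $\sigma_\theta$ (unique since $\af{\theta}$ is simple). With this reading the two classes agree, and everything else reduces to the compactness characterization of Theorem (\ref{BaireSpace-top-thm}) together with the continuity of Theorem (\ref{af-theta-thm}).
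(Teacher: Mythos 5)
Your proof is correct and follows essentially the same route as the paper, whose own proof is a single sentence invoking exactly the two ingredients you use: the compactness characterization of Theorem (\ref{BaireSpace-top-thm}) (applied, via the homeomorphism $\mathsf{cf}$, to the ``box'' $Y$ that $X$ corresponds to in $\BaireSpace$) and the continuity of $\theta \mapsto (\af{\theta},\Lip_\theta^k)$ from Theorem (\ref{af-theta-thm}). The details you supply --- the index shift $b_n = r_{n+1}$, the verification that $Y$ is closed and totally bounded, the empty-set case, and the identification of the displayed class with the image $\{(\af{\theta},\Lip_\theta^k) : \theta\in X\}$ --- are precisely what the paper leaves implicit, and your reading of that last identification matches the paper's intent.
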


\begin{proof}
This follows from Theorem (\ref{BaireSpace-top-thm}) and the continuity established in Theorem (\ref{af-theta-thm}).
\end{proof}

We were thus able to obtain several examples of compact classes of quasi-Leibniz quantum compact metric spaces for the quantum propinquity and consisting of infinitely many AF algebras, which is a rather notable result. We also note that since the dual propinquity \cite{Latremoliere13b} is also a metric up to isometric isomorphism and is dominated by the quantum propinquity, the topology induced by the quantum propinquity and the dual propinquity on these compact classes must agree.

\section{Quantum ultrametrics on the Cantor Set}\label{cantor}

The Gel'fand spectrum of Abelian AF algebras are homeomorphic to compact subspaces of the Cantor set. In this section, we will explore the Monge-Kantorovich metrics induced by the Lip-norms defined in Theorem (\ref{AF-lip-norms-thm}) on the Cantor set itself. We will prove, in particular, that the standard ultrametrics on the Cantor set can be recovered directly from our construction.

There are many standard presentations of the Cantor set, and we shall pick the following for our purpose:
\begin{notation}
Let $\Z_2 = \{0,1\}$ with the discrete topology. The Cantor set is given by:
\begin{equation*}
\mathcal{C} = \left\{ (z_n)_n\in \N : n \in \Z_2 \right\} = \prod_{n\in\N} \Z_2
\end{equation*}
with the product topology.
\end{notation}

In order to fit the Cantor set inside the framework of this paper, we shall explicit a natural inductive sequence of finite dimensional Abelian C*-algebras converging to the C*-algebra $C(\mathcal{C})$ of $\C$-valued continuous functions on $\mathcal{C}$.

\begin{notation}\label{Cantor-af-notation}
For all $n\in\N$, we denote the evaluation map $(z_m)_{m\in\N}\in\mathcal{C} \mapsto z_n$ by $\eta_n$. Note that $\eta_n \in C(\mathcal{C})$ is a projection and $u_n = 2\eta_n - \unit_{C(\mathcal{C})}$ is a self-adjoint unitary in $C(\mathcal{C})$. 

We set $\A_0 = \C \unit_{C(\mathcal{C})}$ and, for all $n\in\N\setminus\{0\}$, we set:
\begin{equation*}
\A_n = \mathrm{C^\ast}\left( \{ \unit_{C(\mathcal{C})} , u_0, \ldots, u_{n-1}\}\right)\text{.}
\end{equation*}

By definition, $\A_n$ is a finite dimensional C*-subalgebra of $C(\mathcal{C})$, with the same unit as $C(\mathcal{C})$. Moreover, $\A_n \subseteq \A_{n+1}$ for all $n\in\N$. Last, it is easy to check that $\bigcup_{n\in\N}\A_n$ is a unital *-subalgebra of $C(\mathcal{C})$ which separates points; as $\mathcal{C}$ is compact, the Stone-Weierstra{\ss}   theorem implies that:
\begin{equation*}
C(\mathcal{C}) = \mathrm{closure}\left(\bigcup_{n\in\N} \A_n\right)\text{.}
\end{equation*}
If we denote the inclusion map $\A_n \hookrightarrow \A_{n+1}$ by $\alpha_n$ for all $n\in\N$, then $C(\mathcal{C}) = \varinjlim\mathcal{T}$ where $\mathcal{T} = (\A_n,\alpha_n)$. We note that of course, $\indmor{\alpha}{n}$ is just the inclusion map of $\A_n$ into $C(\mathcal{C})$ for all $n\in\N$; whenever possible we will thus omit the maps $\alpha_n$ and $\indmor{\alpha}{n}$ from our notations.
\end{notation}

We now have our standard description of $C(\mathcal{C})$ as an AF algebra, and a specific inductive sequence to use in Theorem (\ref{AF-lip-norms-thm}). We also require a particular choice of a faithful tracial state; as $C(\mathcal{C})$ is Abelian, we have quite some choice of such states. We will focus our attention on a specific construction.

\begin{notation}\label{Cantor-trace-notation}
The set $\mathcal{C} = \prod_{n\in\N} \Z_2$ is a group for the pointwise addition modulo 1. As $\mathcal{C}$ is compact, there exists a unique Haar probability measure, which defines by integration a faithful tracial state $\lambda$ on $C(\mathcal{C})$. 

It is easy to check that, for any finite, nonempty $F\subset \N$, we have:
\begin{equation*}
\lambda\left(\prod_{j\in F}\eta_j\right) = 2^{-\# F}
\end{equation*}
where $\# F$ is the cardinal of $F$. Indeed, $\prod_{j\in F}\eta_j$ is simply the indicator function of the subset:
\begin{equation*}
\left\{ (z_n)_{n\in\N} \in \mathcal{C} : \forall j \in F\quad z_j = 1\right\}\text{.}
\end{equation*}
It is then easy to check that $\mathcal{C}$ is the union of $2^{\# F}$ disjoint translates  of $F$.
\end{notation}

The primary advantage of our choice of tracial state is illustrated in the following lemma.

\begin{lemma}\label{Cantor-ortho-lemma}
We shall use Notations (\ref{Cantor-af-notation}) and (\ref{Cantor-trace-notation}). If we endow $C(\mathcal{C})$ with the inner product:
\begin{equation*}
(f,g) \in C(\mathcal{C}) \mapsto \lambda(f\overline{g})\text{,}
\end{equation*}
then $u_{n} \in \A_n^\perp$ for all $n\in\N$. Moreover $\left(\prod_{j \in F} u_j\right)_{F\in \mathcal{F}}$, where $\mathcal{F}$ is the set of nonempty finite subsets of $\N$, is an orthonormal family of $L^2(C(\mathcal{C}),\lambda)$.
\end{lemma}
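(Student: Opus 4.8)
The plan is to reduce everything to a single master computation: for every finite subset $S\subseteq\N$,
\[
\lambda\left(\prod_{j\in S}u_j\right) = \begin{cases} 1 & \text{if } S = \emptyset,\\ 0 & \text{if } S\neq\emptyset. \end{cases}
\]
Granting this, both assertions follow quickly. First I would record two elementary facts about the generators. Since each $\eta_j$ is a projection, $\eta_j^2 = \eta_j$, so $u_j = 2\eta_j - \unit$ satisfies $u_j^\ast = u_j$ and $u_j^2 = 4\eta_j^2 - 4\eta_j + \unit = \unit$; thus every $u_j$ is a real-valued self-adjoint unitary, and the $u_j$ commute because $C(\mathcal{C})$ is abelian. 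In particular complex conjugation drops out of all the inner products below.

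To establish the master identity I would expand $\prod_{j\in S}u_j = \prod_{j\in S}(2\eta_j - \unit)$ over subsets, getting $\sum_{T\subseteq S} 2^{\# T}(-1)^{\# S - \# T}\prod_{j\in T}\eta_j$. Applying $\lambda$ and inserting the value $\lambda(\prod_{j\in T}\eta_j) = 2^{-\# T}$ from Notation (\ref{Cantor-trace-notation}) (which also holds for $T=\emptyset$, since the empty product is $\unit$ and $\lambda(\unit)=1$), the powers of $2$ cancel and I am left with $\sum_{T\subseteq S}(-1)^{\# S - \# T} = \sum_{t=0}^{\# S}\binom{\# S}{t}(-1)^{\# S - t} = (1+(-1))^{\# S}$, which equals $0$ exactly when $S\neq\emptyset$ and $1$ when $S=\emptyset$. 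This binomial cancellation is the computational heart of the lemma.

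For the orthonormality of $\left(\prod_{j\in F}u_j\right)_{F\in\mathcal{F}}$, I would use commutativity and $u_j^2 = \unit$ to collapse products: $\prod_{j\in F}u_j\cdot\prod_{k\in G}u_k = \prod_{j\in F\triangle G}u_j$, where $\triangle$ denotes symmetric difference. Hence $\inner{\prod_{j\in F}u_j}{\prod_{k\in G}u_k} = \lambda\left(\prod_{j\in F\triangle G}u_j\right) = \delta_{F,G}$ by the master identity, since $F\triangle G=\emptyset$ precisely when $F=G$; taking $F=G$ gives norm one. For $u_n\in\A_n^\perp$, the one structural point to verify is that $\A_n$ is the linear span of $\{\prod_{j\in F}u_j : F\subseteq\{0,\ldots,n-1\}\}$: because the $u_j$ commute and square to $\unit$, every word in $u_0,\ldots,u_{n-1}$ reduces to such a square-free product, so this span is already a unital $\ast$-subalgebra containing the generators and, being finite dimensional, is closed; thus it equals $\A_n$. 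Then for any $F\subseteq\{0,\ldots,n-1\}$ I have $\inner{u_n}{\prod_{j\in F}u_j} = \lambda\left(\prod_{j\in F\cup\{n\}}u_j\right) = 0$ since $F\cup\{n\}$ is nonempty, and orthogonality to a spanning set yields $u_n\perp\A_n$.

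I expect no serious obstacle: the proof is a direct computation once the master identity is isolated, and the only non-computational step, identifying $\A_n$ with the span of square-free products, is routine. I note in passing that the master identity simply expresses that the functions $\prod_{j\in S}u_j = (-1)^{\# S}\prod_{j\in S}\chi_j$, with $\chi_j(z) = (-1)^{z_j}$, are up to sign the characters of the compact abelian group $\mathcal{C}=\prod_{n}\Z_2$, so the orthonormality is the classical orthogonality of characters against Haar measure; nonetheless I would keep the self-contained computation above to remain within the elementary framework of the paper.
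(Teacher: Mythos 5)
Your proposal is correct and follows essentially the same route as the paper: the heart of both arguments is the identical binomial expansion of $\prod_{j\in S}(2\eta_j-\unit)$ against $\lambda$, with the values $\lambda\bigl(\prod_{j\in T}\eta_j\bigr)=2^{-\#T}$ cancelling the powers of $2$ to leave $(1-1)^{\#S}=0$ for $S\neq\emptyset$. Your write-up merely makes explicit two steps the paper leaves implicit --- the collapse $\prod_{j\in F}u_j\cdot\prod_{k\in G}u_k=\prod_{j\in F\triangle G}u_j$ behind the orthogonality claim, and the verification that $\A_n$ is spanned by the square-free products --- which is a modest improvement in completeness rather than a different method.
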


\begin{proof}
We let, for all $n\in\N\setminus\{0\}$:
\begin{equation*}
\B_n = \left\{ \unit_{C(\mathcal{C})}, \prod_{j \in F} u_j : F \text{ is a nonempty subset of $\{0,\ldots,n-1\}$} \right\}\text{.}
\end{equation*}
We note that $\B_n$ is a basis for $\A_n$. We also note that $\left(\prod_{j \in F} u_j\right)_{F\in \mathcal{F}}$ is a Hamel basis of the space $\bigcup_{n\in\N} \A_n$.

Now, let $n\in\N$ and $F \subseteq \{0,\ldots,n-1\}$ be nonempty. We have:
\begin{equation}\label{Cantor-orth-lemma-eq1}
\begin{split}
\lambda\left( u_n^\ast \prod_{j \in F} u_j\right) &= \lambda\left( (2\eta_n-\unit_{C(\mathcal{C})}) \prod_{j\in F}(2\eta_j - \unit_{C(\mathcal{C})}) \right)\\
&= \lambda\left( \prod_{j\in F\cup\{n\}}(2\eta_j - \unit_{C(\mathcal{C})}) \right)\\
&= \sum_{G \subseteq F\cup\{n\}} (-1)^{\# F + 1 - \# G} 2^{\# G} \lambda\left(\prod_{j \in G}\eta_j\right)\\
&= \sum_{G \subseteq F\cup\{n\}} (-1)^{\# F + 1 - \# G} 2^{\# G} 2^{-\# G}\\
&= \sum_{j \in F\cup\{n\} } {\#F + 1 \choose j} (-1)^j \\
&= (1 - 1)^{\# F + 1} = 0 \text{.}
\end{split}
\end{equation}
Since $\alg{B}_n$ is a basis for $\A_n$, we conclude that indeed, $u_n \in \A_n^\perp$.

Moreover, we note that Expression (\ref{Cantor-orth-lemma-eq1}) also proves that $\alg{B}$ is an orthogonal family in $L^2(C(\mathcal{C}),\lambda)$. As the product of unitaries is unitary, our definition of the inner product then shows trivially that the family $\alg{B}$ is orthonormal.
\end{proof}

We now have the tools needed to state our main theorem for this section: Lip-norms defined using Theorem (\ref{AF-lip-norms-thm}) with the ingredients described in this section naturally lead to ultrametrics on the Cantor space via the associated Monge-Kantorovich metric.

\begin{theorem}\label{Cantor-ultra-thm}
Let $\beta : \N \rightarrow \N\setminus\{0\}$ be a decreasing sequence with $\lim_\infty \beta = 0$. Identifying the Cantor space $\mathcal{C}$ with the Gel'fand spectrum of $C(\mathcal{C})$, and using Notations (\ref{Cantor-af-notation}) and (\ref{Cantor-trace-notation}), we have, for all $x,y \in \mathcal{C}$:
\begin{equation*}
\Kantorovich{\Lip_{\mathcal{T},\lambda}^\beta} (x,y) = 
\begin{cases}
0 \text{ if $x = y$,}\\
2\beta(\min\{n \in \N : x_n \not= y_n\}) \text{ otherwise.}
\end{cases}
\end{equation*}
By construction, $\Kantorovich{\Lip_{\mathcal{T},\lambda}^\beta}$ is an ultrametric on $\mathcal{C}$.
\end{theorem}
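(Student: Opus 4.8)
The plan is to identify each point $x\in\mathcal{C}$ with the character of $C(\mathcal{C})$ given by evaluation at $x$, so that the {\mongekant} distance between the two corresponding pure states reads
\begin{equation*}
\Kantorovich{\Lip_{\mathcal{T},\lambda}^\beta}(x,y) = \sup\left\{ |f(x)-f(y)| : f\in\sa{C(\mathcal{C})},\ \Lip_{\mathcal{T},\lambda}^\beta(f)\leq 1 \right\}\text{.}
\end{equation*}
The computation rests on one structural observation: by a dimension count, $\A_n$ is exactly the algebra of continuous functions on $\mathcal{C}$ depending only on the first $n$ coordinates $z_0,\ldots,z_{n-1}$, and (as in the proof of Theorem (\ref{AF-lip-norms-thm})) the conditional expectation $\CondExp{\cdot}{\A_n}$ is the orthogonal projection of $L^2(C(\mathcal{C}),\lambda)$ onto $\A_n$, read inside $C(\mathcal{C})$. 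I will use Lemma (\ref{Cantor-ortho-lemma}) to evaluate this projection on the orthonormal family $\left(\prod_{j\in F}u_j\right)_{F}$.

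Set $N = \min\{n\in\N : x_n\not=y_n\}$ for $x\not=y$. For the upper bound, suppose $\Lip_{\mathcal{T},\lambda}^\beta(f)\leq 1$; then $\|f-\CondExp{f}{\A_N}\|_{C(\mathcal{C})}\leq\beta(N)$ directly from the definition of $\Lip_{\mathcal{T},\lambda}^\beta$. Since $x$ and $y$ agree on their first $N$ coordinates and $\CondExp{f}{\A_N}\in\A_N$ depends only on those coordinates, we have $\CondExp{f}{\A_N}(x)=\CondExp{f}{\A_N}(y)$; subtracting this common value and applying the triangle inequality gives
\begin{equation*}
|f(x)-f(y)| = \left|\left(f-\CondExp{f}{\A_N}\right)(x) - \left(f-\CondExp{f}{\A_N}\right)(y)\right| \leq 2\left\|f-\CondExp{f}{\A_N}\right\|_{C(\mathcal{C})} \leq 2\beta(N)\text{.}
\end{equation*}
For the matching lower bound I will exhibit an explicit optimizer, namely $f = \beta(N)u_N$. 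By Lemma (\ref{Cantor-ortho-lemma}), $u_N\in\A_N^\perp$, hence $u_N$ is orthogonal to $\A_n$ for every $n\leq N$, so $\CondExp{u_N}{\A_n}=0$ when $n\leq N$; and $u_N\in\A_n$ when $n>N$, so $\CondExp{u_N}{\A_n}=u_N$ there. Since $\|u_N\|_{C(\mathcal{C})}=1$ and $\beta$ is decreasing, this yields
\begin{equation*}
\Lip_{\mathcal{T},\lambda}^\beta(u_N) = \sup_{n\leq N}\frac{1}{\beta(n)} = \frac{1}{\beta(N)}\text{,}
\end{equation*}
so $\Lip_{\mathcal{T},\lambda}^\beta(\beta(N)u_N)=1$. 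As $u_N(x)=2x_N-1$ and $x_N\not=y_N$, we get $|f(x)-f(y)|=2\beta(N)$, attaining the upper bound and establishing the stated formula (the case $x=y$ being immediate).

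Finally, to see that $\Kantorovich{\Lip_{\mathcal{T},\lambda}^\beta}$ is an ultrametric, I will reduce the strong triangle inequality to the elementary fact that the first-disagreement index $N(u,v)=\min\{n\in\N : u_n\not=v_n\}$ satisfies $N(x,z)\geq\min\{N(x,y),N(y,z)\}$ for all $x,y,z$: if $x,y$ and $y,z$ each agree on a range of initial coordinates, so do $x,z$. Because $\beta$ is decreasing, $\beta\left(\min\{a,b\}\right)=\max\{\beta(a),\beta(b)\}$, whence
\begin{equation*}
\Kantorovich{\Lip_{\mathcal{T},\lambda}^\beta}(x,z) = 2\beta(N(x,z)) \leq 2\beta\left(\min\{N(x,y),N(y,z)\}\right) = \max\left\{\Kantorovich{\Lip_{\mathcal{T},\lambda}^\beta}(x,y),\Kantorovich{\Lip_{\mathcal{T},\lambda}^\beta}(y,z)\right\}\text{.}
\end{equation*}
The only delicate point is the upper bound, where one must use that $\A_N$ fails to separate points agreeing on the first $N$ coordinates; the remaining steps are routine verifications resting on the orthonormal family of Lemma (\ref{Cantor-ortho-lemma}) and the monotonicity of $\beta$.
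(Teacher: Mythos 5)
Your proof is correct and takes essentially the same route as the paper's: the upper bound comes from $\CondExp{f}{\A_N}(x)=\CondExp{f}{\A_N}(y)$ combined with $\left\|f-\CondExp{f}{\A_N}\right\|_{C(\mathcal{C})}\leq\beta(N)$, and the lower bound from the explicit element $\beta(N)u_N$, whose Lip-norm is computed from Lemma (\ref{Cantor-ortho-lemma}) exactly as in the paper, with the same reduction of the ultrametric inequality to monotonicity of $\beta$ and the first-disagreement index. If anything, your write-up is slightly cleaner on indexing: by working directly with $n=N$ you avoid the paper's detour through ``$\leq 2\beta(n+1)$ for all $n<N$'', and you justify explicitly (by a dimension count) that $\A_N$ is precisely the algebra of functions of the first $N$ coordinates, which the paper only states in passing.
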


\begin{proof} 
In this proof, we will denote by $\CondExp{\cdot}{\A_n}$ the conditional expectation from $C(\mathcal{C})$ onto $\A_n$, which leave $\lambda$ invariant.

Fix $x \neq y \in \mathcal{C}$. By Theorem (\ref{AF-lip-norms-thm}):
\begin{equation*}
\Kantorovich{\Lip_{\mathcal{T},\lambda}^\beta}(x,y) =\sup \left\{ \vert f(x) - f(y) \vert : f \in C(\mathcal{C}), \Lip_{\mathcal{T},\lambda}^\beta(f)\leq 1  \right\}\text{.}
\end{equation*}

Our computation relies on the following observation. Let $n \geq k \in \N$. Since $u_n \in \A_k^\perp$ in $L^2(C(\mathcal{C}),\lambda)$ by Lemma (\ref{Cantor-ortho-lemma}), we conclude that $\CondExp{u_n}{\A_k} = 0$. Of course, if $k > n \in \N$ then $\CondExp{u_n}{\A_k} = u_n$. Thus we have for all $n\in\N$:
\begin{equation*}
\begin{split}
\Lip_{\mathcal{T},\lambda}^\beta (u_n) &= \max\left\{ \frac{\left\| u_n \right\|_{C(\mathcal{C})}}{\beta(k)} : k \leq n  \right\}\\
&= \max\left\{ \frac{1}{\beta(k+1)} : k < n  \right\}\text{ as $u_n$ is unitary,}\\
&= \frac{1}{\beta(n)}\text{ as $\beta$ is decreasing.}
\end{split}
\end{equation*}

We thus have $\Lip_{\mathcal{T},\lambda}^\beta (\beta(n)u_n) \leq 1$ for all $n\in\N$.

Let $N = \min\{ n\in\N : x_n \not= y_n \}$. Then $|u_N(x) - u_N(y)| = 2$ and thus:
\begin{equation*}
\begin{split}
\Kantorovich{\Lip_{\mathcal{T},\lambda}^\beta}(x,y) \geq \beta(N) |u_N(x) - u_N(y)| = 2\beta(N)\text{.} 
\end{split}
\end{equation*}

On the other hand, for all $n\in\N$, the C*-algebra $\A_n$ is the C*-subalgebra generated by the evaluation maps $\eta_j$ for $j = 0,\ldots,n$ and the identity. Therefore, for any $f\in \mathcal{C}$, we have $\CondExp{f}{\A_n}(x) = \CondExp{f}{\A_n}(y)$ for all $n < N$.

Let $f\in C(\mathcal{C})$ with $\Lip_{\mathcal{T},\lambda}^\beta(f) \leq 1$. Then for all $n < N$:
\begin{equation*}
\begin{split}
|f(x) - f(y)| &= |f(x) - \CondExp{f}{\A_n}(x) - (f(y) - \CondExp{f}{\A_n}(y)) |\\
&\leq 2\|f - \CondExp{f}{\A_n} \|_{C(\mathcal{C})} \\
&\leq 2\beta(n+1)\text{.}
\end{split}
\end{equation*}
Since $\beta$ is decreasing, we thus get:
\begin{equation*}
|f(x) - f(y)| \leq 2 \min\{\beta(n+1) : n < N\} = 2 \beta(N)\text{.}
\end{equation*}
We thus conclude that:
\begin{equation*}
\Kantorovich{\Lip_{\mathcal{T},\lambda}^\beta}(x,y) = 2\beta(N) \text{,}
\end{equation*}
as desired. It is easy to check that $\Kantorovich{\Lip_{\mathcal{T},\lambda}^\beta}$ defines an ultrametric on $\mathcal{C}$ since $\beta$ is decreasing.
\end{proof}

We thus recognize standard ultrametrics on the Cantor set:

\begin{corollary}\label{Cantor-metric-corollary}
Let $r > 1$, and set $\beta_r : n \in \N \mapsto \frac{1}{2} r^{-n}$. Then, for any two $x,y \in \mathcal{C}$, using the notations of Theorem (\ref{Cantor-ultra-thm}), we have:
\begin{equation*}
\Kantorovich{\Lip_{\mathcal{T},\lambda}^{\beta_r}}(x,y) = 
\begin{cases}
0 \text{ if $x=y$,}\\
r^{-\min\{ n \in \N : x_n\not= y_n \} }\text{ otherwise.}
\end{cases}
\end{equation*}
\end{corollary}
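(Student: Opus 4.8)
The plan is to derive this statement as an immediate specialization of Theorem \ref{Cantor-ultra-thm}, so the work consists entirely of checking that $\beta_r$ is an admissible choice of sequence and then performing a one-line substitution into the formula already established there.

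First I would verify the hypotheses of Theorem \ref{Cantor-ultra-thm} for $\beta_r : n \in \N \mapsto \frac{1}{2} r^{-n}$. Since $r > 1$, the sequence $n \mapsto r^{-n}$ is strictly positive, strictly decreasing, and converges to $0$; multiplying by the positive constant $\frac{1}{2}$ preserves all three properties. Thus $\beta_r$ is a strictly positive decreasing sequence with $\lim_\infty \beta_r = 0$, which is precisely what the construction of $\Lip_{\mathcal{T},\lambda}^{\beta_r}$ from Theorem \ref{AF-lip-norms-thm} and the formula in Theorem \ref{Cantor-ultra-thm} require.

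Next I would apply Theorem \ref{Cantor-ultra-thm} verbatim. For $x = y$, both the theorem's formula and the claimed expression return $0$. For $x \neq y$, setting $N = \min\{n \in \N : x_n \neq y_n\}$, which is well-defined and finite precisely because $x \neq y$, the theorem gives
\[
\Kantorovich{\Lip_{\mathcal{T},\lambda}^{\beta_r}}(x,y) = 2\beta_r(N) = 2 \cdot \frac{1}{2} r^{-N} = r^{-N},
\]
which coincides with the asserted value $r^{-\min\{n \in \N : x_n \neq y_n\}}$.

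There is no genuine obstacle here: the entire content lies in Theorem \ref{Cantor-ultra-thm}, and the only points to confirm are that the specific sequence $\beta_r$ meets its hypotheses and that the factor $2$ appearing in front of $\beta$ in that theorem cancels the factor $\frac{1}{2}$ in the definition of $\beta_r$. I would close by remarking that the resulting metric is, up to the choice of base $r$ for the exponential, the standard ultrametric on $\mathcal{C} = \prod_{n\in\N}\Z_2$, so this corollary makes precise the assertion that our construction recovers familiar ultrametrics on the Cantor set.
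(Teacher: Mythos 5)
Your proposal is correct and is exactly the intended argument: the paper states this corollary without a separate proof precisely because it follows by substituting $\beta_r$ into the formula of Theorem (\ref{Cantor-ultra-thm}), with the factor $2$ cancelling the $\frac{1}{2}$, just as you wrote. Your verification that $\beta_r$ is a strictly positive, decreasing sequence vanishing at infinity also correctly reads past the paper's typo in the hypothesis of Theorem (\ref{Cantor-ultra-thm}) (which writes $\beta : \N \rightarrow \N\setminus\{0\}$ where $(0,\infty)$ is clearly meant).
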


\section{Family of Lip-norms for a fixed AF algebra}\label{family-lip-norms}

In this section, we consider the situation in which we fix a unital AF-algebra with faithful tracial state and consider the construction of the Lip-norm from Theorem (\ref{AF-lip-norms-thm}), in which we vary our choices of the sequence $\beta$.  From this, we describe convergence in quantum propinquity with respect to this notion.  We note that Section (\ref{af-theta}) essentially provides an outline for the process. 

\begin{notation}
Let $\beta : \N \rightarrow (0, \infty) $ be a positive sequence that tends to $0$ at infinity.  Denote the space of real-valued sequences that converge to $0$ as $c_0 (\N, \R)$. Define:
\begin{equation*}
c_\beta = \left\{ x \in c_0 (\N, \R) : \forall n \in \N , \ 0<x(n) \leq \beta(n) \right\}.  
\end{equation*}
\end{notation}

\begin{theorem}\label{family-lip-norms-thm} Let $\A$ be an AF algebra endowed with a faithful tracial state $\mu$. Let $\mathcal{I} = (\A_n,\alpha_n)_{n\in\N}$ be an inductive sequence of finite dimensional C*-algebras with C*-inductive limit $\A$, with $\A_0 = \C$ and where $\alpha_n$ is unital for all $n\in\N$.  If $\beta: \N \rightarrow (0, \infty)$ is a positive sequence that  tends to $0$ at infinity and  $\left(x^k \right)_{k \in \N} \cup \{ x\} \subset c_\beta $ such that $x^k $ converges point-wise to $x$, then using the notations of Theorem (\ref{AF-lip-norms-thm}):
\begin{equation*}
\lim_{ k \to \infty} \qpropinquity{}\left(\left( \A, \Lip^{x^k }_{\mathcal{I}, \mu}\right), \left( \A, \Lip^{x}_{\mathcal{I}, \mu}\right)\right) = 0.
\end{equation*}
\end{theorem}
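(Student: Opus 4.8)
The plan is to fix $\varepsilon>0$ and control the quantity $\qpropinquity{}\left(\left(\A,\Lip^{x^k}_{\mathcal{I},\mu}\right),\left(\A,\Lip^x_{\mathcal{I},\mu}\right)\right)$ by inserting a well-chosen finite dimensional algebra $\A_N$, exactly in the spirit of Section (\ref{af-theta}). By the triangle inequality of Theorem-Definition (\ref{def-thm}),
\[
\qpropinquity{}\left(\left(\A,\Lip^{x^k}_{\mathcal{I},\mu}\right),\left(\A,\Lip^x_{\mathcal{I},\mu}\right)\right) \leq A_k + B_k + C\text{,}
\]
where $A_k = \qpropinquity{}\left(\left(\A,\Lip^{x^k}_{\mathcal{I},\mu}\right),\left(\A_N,\Lip^{x^k}_{\mathcal{I},\mu}\circ\indmor{\alpha}{N}\right)\right)$, $C = \qpropinquity{}\left(\left(\A_N,\Lip^x_{\mathcal{I},\mu}\circ\indmor{\alpha}{N}\right),\left(\A,\Lip^x_{\mathcal{I},\mu}\right)\right)$, and $B_k$ is the propinquity between the two finite dimensional spaces $\left(\A_N,\Lip^{x^k}_{\mathcal{I},\mu}\circ\indmor{\alpha}{N}\right)$ and $\left(\A_N,\Lip^{x}_{\mathcal{I},\mu}\circ\indmor{\alpha}{N}\right)$ (both $(2,0)$-quasi-Leibniz quantum compact metric spaces by Theorem (\ref{AF-lip-norms-thm})). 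The tail terms $A_k,C$ will be handled uniformly in $k$, while $B_k$ is a genuinely finite dimensional comparison.

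First I would dispatch the tail. Theorem (\ref{AF-lip-norms-thm}) gives $A_k \leq x^k(N)$ and $C \leq x(N)$ for every level $N$ and every $k$. The decisive point is that $x^k,x \in c_\beta$, so that $x^k(N)\leq\beta(N)$ and $x(N)\leq\beta(N)$ \emph{for all $k$ simultaneously}. Since $\beta$ tends to $0$, I may choose $N$ once and for all with $\beta(N)<\frac{\varepsilon}{3}$, which forces $A_k<\frac{\varepsilon}{3}$ for every $k$ and $C<\frac{\varepsilon}{3}$. This is where the hypothesis that the approximating sequences remain below the fixed envelope $\beta$ (which tends to $0$) is essential: it renders the tail estimate uniform in $k$, so no interplay between the choice of $N$ and of $k$ is needed.

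The heart of the argument is showing $B_k\to 0$, and here I would follow the method of Lemma (\ref{af-theta-fd-lemma}). The key simplification is that for $a\in\A_N$ the element $\indmor{\alpha}{N}(a)$ is fixed by $\CondExp{\cdot}{\indmor{\alpha}{n}(\A_n)}$ for every $n\geq N$, so that for either weight $y=x^k$ or $y=x$,
\[
\Lip^{y}_{\mathcal{I},\mu}\left(\indmor{\alpha}{N}(a)\right) = \max_{0\leq n\leq N-1}\frac{\left\|\indmor{\alpha}{N}(a) - \CondExp{\indmor{\alpha}{N}(a)}{\indmor{\alpha}{n}(\A_n)}\right\|_\A}{y(n)}
\]
is a \emph{finite} maximum over $n\in\{0,\ldots,N-1\}$. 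Since $x^k(n)\to x(n)>0$ at each of these finitely many indices, the reciprocals $\frac{1}{x^k(n)}$ converge to $\frac{1}{x(n)}$, and as $\indmor{\alpha}{N}$ is isometric the numerators are bounded by $2\|a\|_{\A_N}$; hence $\Lip^{x^k}_{\mathcal{I},\mu}\circ\indmor{\alpha}{N}$ converges to $\Lip^{x}_{\mathcal{I},\mu}\circ\indmor{\alpha}{N}$ \emph{uniformly on the unit ball} of $\A_N$.

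It then remains to turn this uniform convergence of Lip-norms on the fixed finite dimensional $\A_N$ into convergence for $\qpropinquity{}$, and this is the step I expect to require the most care. I would use the identity bridge $\gamma=\left(\A_N,\unit_{\A_N},\mathrm{id},\mathrm{id}\right)$, whose height vanishes, so that $B_k$ is bounded by its reach, namely by the Hausdorff distance $\Haus{\|\cdot\|_{\A_N}}$ between the two Lip-balls. Fixing a complement $\alg{W}$ of $\R\unit_{\A_N}$ in $\sa{\A_N}$ with unit sphere $\alg{S}$, compactness of $\alg{S}$ together with positivity of $\Lip^{x}_{\mathcal{I},\mu}\circ\indmor{\alpha}{N}$ off the scalars yields $m:=\min_{\alg{S}}\Lip^{x}_{\mathcal{I},\mu}\circ\indmor{\alpha}{N}>0$. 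The rescaling trick of Lemma (\ref{af-theta-fd-lemma}) — replacing $a$ by $\frac{\Lip^{x}_{\mathcal{I},\mu}(\indmor{\alpha}{N}(a))}{\Lip^{x^k}_{\mathcal{I},\mu}(\indmor{\alpha}{N}(a))}\,a$ on $\alg{S}$ and extending by homogeneity and by translation by scalars — then bounds the reach by a fixed multiple of $\sup_{\alg{S}}\left|\Lip^{x^k}_{\mathcal{I},\mu}\circ\indmor{\alpha}{N}-\Lip^{x}_{\mathcal{I},\mu}\circ\indmor{\alpha}{N}\right|$, which tends to $0$. Thus $B_k\to 0$, and combining with the uniform tail bounds gives $\limsup_k\qpropinquity{}\left(\left(\A,\Lip^{x^k}_{\mathcal{I},\mu}\right),\left(\A,\Lip^x_{\mathcal{I},\mu}\right)\right)\leq\frac{2\varepsilon}{3}<\varepsilon$; as $\varepsilon$ was arbitrary, the theorem follows. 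The main obstacle is precisely this last conversion: uniform closeness of the two Lip-norms does not by itself control the Monge-Kantorovich balls, and it is the finite dimensionality of $\A_N$ — compactness of $\alg{S}$, hence $m>0$ — that makes the rescaling legitimate.
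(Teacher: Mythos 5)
Your proposal is correct and follows essentially the same route as the paper's proof: a triangle inequality through a fixed finite dimensional level $\A_N$, tail terms controlled uniformly in $k$ by the envelope $\beta$ via Theorem (\ref{AF-lip-norms-thm}) and the definition of $c_\beta$, and the finite dimensional comparison handled by the bridge-and-rescaling method of Lemma (\ref{af-theta-fd-lemma}), which the paper invokes by packaging the restricted Lip-norms as a continuous map $G$ on $\left(R^N_+,\mathsf{d}_\infty\right)$ and citing that lemma's proof. Your direct verification that the restricted Lip-norms converge uniformly on the unit ball of $\A_N$ (finite maximum, convergent reciprocals, bounded numerators) is exactly what the paper's joint-continuity argument provides, so the two proofs match in substance.
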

\begin{proof}  The proof follows the procedure from Section (\ref{af-theta}). We begin by verifying some details.

Let $\beta: \N \rightarrow (0, \infty)$ be a positive sequence that  tends to $0$ at infinity.

Assume that $\left(x^k \right)_{k \in \N} \cup \{ x\} \subset c_\beta $ such that $x^k $ converges point-wise to $x$.
Next, we show convergence of the finite dimensional spaces $\A_n $ for all $n \in \N$.  Thus, fix $N \in \N$.  Let $y \in c_\beta$, so that $y(n)>0$ for all $n \in \N$,  and let $a \in \A_N $.  Then: 
\begin{equation*}
\Lip^y_{\mathcal{I}, \mu} \circ \indmor{\alpha}{N} (a) = \max \left\{ \frac{\left\Vert \indmor{\alpha}{N}(a) - \CondExp{\indmor{\alpha}{N}(a)}{\indmor{\alpha}{n}(\A_n)}\right\Vert_\A}{y(n)} : n \in \N , n \leq N \right\}.
\end{equation*}

Define $R^N_+ = \{ y=(y(0), y(1), \ldots, y(N)) \in \R^{N+1} : \forall n \in \{0,1,\ldots, N\} , y(n)>0 \}$. For $x,y \in R^N_+$, we define $\mathsf{d}_\infty (x,y) = \max \left\{ \vert x(n) -y(n)\vert : n \in \{0,1,\ldots , N \}\right\} $.  Thus, $\left(R^N_+ , \mathsf{d}_\infty \right)$ is a metric space. 
Define $g :   R^N_+ \times \A_N \rightarrow \R$ by: 
\begin{equation*}
g(y,a) =  \max \left\{ \frac{\left\Vert \indmor{\alpha}{N}(a) - \CondExp{\indmor{\alpha}{N}(a)}{\indmor{\alpha}{n}(\A_n)}\right\Vert_\A}{y(n)} : n \in \N , n \leq N \right\},
\end{equation*}
which is finite by definition of $R^N_+$. 
Therefore, it follows that:
\begin{equation*}
g : \left(R^N_+ , \mathsf{d}_\infty \right) \times \left(\A_N , \Vert \cdot \Vert_{\A_N} \right) \rightarrow \R
\end{equation*}
is continuous.  Denote the class of all $(2,0)$-quasi-Leibniz quantum compact metric spaces by $\mathcal{QQCMS}_{2,0}$.  Next, define 
$G :  R^N_+ \rightarrow \mathcal{QQCMS}_{2,0} $ by: 
\begin{equation*}
G(y)= (\A_N , g(y, \cdot ) ),
\end{equation*}
which is well-defined by definition of $g$.  
Thus,  following the proof of Theorem (\ref{af-theta-fd-lemma}), we conclude that $G :   \left(R^N_+ , \mathsf{d}_\infty \right) \rightarrow \left(\mathcal{QQCMS}_{2,0} , \qpropinquity{} \right)$ is continuous.  If $ y \in \R^\N$, then we denote $y|_N = (y(0) , y(1) , \ldots , y(N) ) $.  Since $\left(x^k \right)_{k \in \N} \cup \{ x\} \subset c_\beta $, we have that $\left(x^k|_N \right)_{k \in \N} \cup \left\{ x|_N\right\} \subset R^N_+. $ Furthermore, the assumption that $x^k $ converges pointwise to $x$  implies that $\lim_{k \to \infty} \mathsf{d}_\infty \left( x^k|_N , x|_N \right) =0 $.  Therefore:
\begin{equation*}
\lim_{k \to \infty} \qpropinquity{}\left(G\left(x^k|_N \right), G\left(x|_N \right)\right)=0 .
\end{equation*}  
But, for all $k \in \N$:
\begin{equation*}
\qpropinquity{}\left(\left(\A_N , \Lip^{x^k}_{\mathcal{I}, \mu} \circ \indmor{\alpha}{N}\right), \left(\A_N ,  \Lip^{x}_{\mathcal{I}, \mu} \circ \indmor{\alpha}{N}\right)\right)=\qpropinquity{}\left(G\left(x^k|_N \right), G\left(x|_N \right)\right). 
\end{equation*}
We thus have:
\begin{equation}\label{finite-beta}
\lim_{k \to \infty} \qpropinquity{}\left(\left(\A_N , \Lip^{x^k}_{\mathcal{I}, \mu} \circ \indmor{\alpha}{N}\right), \left(\A_N ,  \Lip^{x}_{\mathcal{I}, \mu} \circ \indmor{\alpha}{N}\right)\right)=0.
\end{equation}
As $N \in \N$ was arbitrary, we conclude that Equation (\ref{finite-beta}) is true for all $n \in \N $.

We are now ready to prove convergence.   Let $\varepsilon >0 $.   There exists $M \in \N $ such that for all $n \geq M $ , $\beta(n) < \varepsilon/2 $.  Hence, if $n \geq M $, then by Theorem (\ref{AF-lip-norms-thm}) and definition of $c_\beta$:
\begin{equation*}
\mathsf{\Lambda} \left(\left(\A_n ,  \Lip^{x^k}_{\mathcal{I}, \mu} \circ \indmor{\alpha}{n}\right), \left(\A ,  \Lip^{x^k}_{\mathcal{I}, \mu}\right)\right) \leq x^k (n) \leq \beta(n) < \varepsilon/2
\end{equation*}
for all $k \in \N$ and:
\begin{equation*}
\mathsf{\Lambda} \left(\left(\A_n ,  \Lip^{x}_{\mathcal{I}, \mu} \circ \indmor{\alpha}{n}\right), \left(\A , \Lip^{x}_{\mathcal{I}, \mu}\right)\right) \leq x (n) \leq \beta(n) < \varepsilon/2 .
\end{equation*}

By the triangle inequality and Equation (\ref{finite-beta}), we thus get:
\begin{equation*}
\lim \sup_{k \to \infty} \mathsf{\Lambda} \left( \left(\A , \Lip^{x^k}_{\mathcal{I}, \mu}\right), \left(\A , \Lip^{x}_{\mathcal{I}, \mu} \right)\right) \leq \varepsilon.
\end{equation*}
As $\varepsilon>0$ was arbitrary, $\lim_{k \to \infty} \mathsf{\Lambda} \left( \left(\A , \Lip^{x^k}_{\mathcal{I}, \mu} \right), \left(\A , \Lip^{x}_{\mathcal{I}, \mu} \right)\right)=0$.
\end{proof}

In particular, for the Cantor set, we can use this result to discuss continuity in quantum propinquity of the continuous functions on the Cantor set with respect to the quantum ultrametrics discussed in Section (\ref{cantor}).  All that is required is a sequence in $c_\beta $, which converges point-wise to some element in $ c_\beta $.  We present this in the case of the standard ultrametrics, and note that although we are using the same $C^\ast$-algebra, $C(\mathcal{C})$, if $ r \neq s$, then the associated standard ultrametrics on the Cantor set are not isometric.  This implies that the function defined in the following Corollary (\ref{cantor-beta}) is not constant up to isometric isomorphism.

\begin{corollary}\label{cantor-beta}Let $r>1$, and set $\beta_r : n \in \N \mapsto  \frac{1}{2}r^{-n}$.   Using the notations of Theorem (\ref{AF-lip-norms-thm}) along with Notations (\ref{Cantor-af-notation}) and (\ref{Cantor-trace-notation}), the function: 
\begin{equation*}
\mathsf{u}: r \in (1, \infty) \longmapsto \left(C(\mathcal{C}), \Lip^{\beta_r}_{\mathcal{T},\lambda}\right)
\end{equation*}
is continuous from $(1, \infty)$ to the class of $(2,0)$-quasi-Leibniz quantum compact metric spaces metrized by the quantum propinquity $\mathsf{\Lambda}$.
\end{corollary}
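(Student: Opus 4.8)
The plan is to reduce the statement to Theorem (\ref{family-lip-norms-thm}) by checking sequential continuity of $\mathsf{u}$: since the domain $(1,\infty)$ is metrizable, continuity is equivalent to sequential continuity, so it suffices to show that whenever a sequence $(r_k)_{k\in\N}$ in $(1,\infty)$ converges to some $r\in(1,\infty)$, we have $\lim_{k\to\infty}\qpropinquity{}\big(\mathsf{u}(r_k),\mathsf{u}(r)\big)=0$. Throughout, I would work with the fixed inductive sequence $\mathcal{T}=(\A_n,\alpha_n)_{n\in\N}$ and the faithful trace $\lambda$ of Notations (\ref{Cantor-af-notation}) and (\ref{Cantor-trace-notation}), for which all the hypotheses of Theorem (\ref{family-lip-norms-thm}) hold (namely $\A_0=\C$, each $\alpha_n$ is unital, and $\lambda$ is faithful).

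The one point requiring care is that Theorem (\ref{family-lip-norms-thm}) is phrased relative to a single fixed dominating sequence $\beta$, so I must first produce one $\beta$ with $\{\beta_{r_k}\}_{k\in\N}\cup\{\beta_r\}\subseteq c_\beta$. The key observation is that $r_k\to r>1$ forces a uniform lower bound strictly above $1$: choosing $K\in\N$ with $r_k>\tfrac{1+r}{2}$ for all $k\geq K$ and setting $s=\min\{r_0,\ldots,r_{K-1},\tfrac{1+r}{2}\}$, I obtain $s>1$ together with $s\leq r_k$ for all $k\in\N$ and $s\leq r$. Then $\beta:=\beta_s:n\mapsto\tfrac{1}{2} s^{-n}$ is a positive sequence tending to $0$, and since $s^{-n}\geq r_k^{-n}$ and $s^{-n}\geq r^{-n}$ for all $n$, each $\beta_{r_k}$ and $\beta_r$ satisfies $0<\beta_\bullet(n)\leq\beta(n)$; hence $\{\beta_{r_k}\}_{k\in\N}\cup\{\beta_r\}\subseteq c_\beta$.

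It then remains to verify pointwise convergence $\beta_{r_k}\to\beta_r$, which is immediate: for each fixed $n\in\N$ one has $\beta_{r_k}(n)=\tfrac{1}{2} r_k^{-n}\to\tfrac{1}{2} r^{-n}=\beta_r(n)$ by continuity of $t\mapsto t^{-n}$ on $(1,\infty)$. With the dominating sequence $\beta=\beta_s$ in hand and with $x^k:=\beta_{r_k}$, $x:=\beta_r$, Theorem (\ref{family-lip-norms-thm}) applied to $(\A,\mu)=(C(\mathcal{C}),\lambda)$ and $\mathcal{I}=\mathcal{T}$ yields
\begin{equation*}
\lim_{k\to\infty}\qpropinquity{}\Big(\big(C(\mathcal{C}),\Lip^{\beta_{r_k}}_{\mathcal{T},\lambda}\big),\big(C(\mathcal{C}),\Lip^{\beta_r}_{\mathcal{T},\lambda}\big)\Big)=0,
\end{equation*}
which is exactly $\qpropinquity{}(\mathsf{u}(r_k),\mathsf{u}(r))\to 0$. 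Since $(r_k)_{k\in\N}$ was an arbitrary sequence converging to $r$, the map $\mathsf{u}$ is continuous.

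I expect no serious obstacle here, since the substantive analytic work is carried entirely by Theorem (\ref{family-lip-norms-thm}); the only subtlety — and the step I would take the most care with — is the construction of the uniform lower bound $s>1$ that produces a single dominating sequence valid along the whole approximating sequence, which is precisely where the hypothesis $r\in(1,\infty)$ (rather than $r=1$) is used.
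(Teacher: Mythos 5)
Your proof is correct and follows essentially the same route as the paper's: reduce to sequential continuity, produce a single dominating sequence $\beta_s \in c_0(\N,\R)$ via a uniform lower bound $s>1$ on the parameters, check pointwise convergence $\beta_{r_k}\to\beta_r$, and invoke Theorem (\ref{family-lip-norms-thm}). The only cosmetic difference is that the paper obtains the uniform lower bound by noting that $\{r_k : k\in\N\}\cup\{r\}$ is compact in $(1,\infty)$, whereas you construct it explicitly from a tail estimate; both are equally valid.
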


\begin{proof}
Let  $(r_n)_{n \in \N} \cup \{r\} \subset (1,\infty)$ such that $\lim_{n \to \infty} \vert r_n -r|=0$. Since  $(r_n)_{n \in \N} \cup \{r\}$ is a compact set, there exists some $a>1$ such that for all $n \in \N $, $r_n , r \in [a,  \infty)$.  Therefore, for all $n \in \N $, we have that $\beta_{r_n} , \beta_r \in c_{\beta_a } $.  The sequence $(\beta_{r_n})_{n \in \N}$ converges point-wise to $\beta_r $ since:
\begin{equation*}
 \lim_{n \to \infty } \vert \beta_{r_n} (m) - \beta_r (m)\vert = \lim_{n \to \infty} \vert \tfrac{1}{2}r_n^{-m} - \tfrac{1}{2}r^{-m} \vert =0 
\end{equation*} for all $ m \in \N$.  Hence, by the Theorem (\ref{family-lip-norms-thm}),
\begin{equation*}
\lim_{n \to \infty} \mathsf{\Lambda} (\mathsf{u}(r_n) , \mathsf{u}(r))=0.
\end{equation*}  
Thus, sequential continuity provides the desired result.  
\end{proof}

\providecommand{\bysame}{\leavevmode\hbox to3em{\hrulefill}\thinspace}
\providecommand{\MR}{\relax\ifhmode\unskip\space\fi MR }
\providecommand{\MRhref}[2]{%
  \href{http://www.ams.org/mathscinet-getitem?mr=#1}{#2}
}
\providecommand{\href}[2]{#2}

\vfill

\end{document}